\newtheoremstyle{theorema}%
    {4pt}
    {4pt}
    {\slshape}
    {}
    {\bfseries}
    {.}
    {.5em}
    {}
\theoremstyle{theorema} 
\newtheorem{theorem}{Theorem}
\newtheorem{proposition}[theorem]{Proposition}
\newtheorem{lemma}[theorem]{Lemma}
\newtheorem{corollary}[theorem]{Corollary}
\theoremstyle{definition} 
\newtheorem{definition}[theorem]{Definition}
\theoremstyle{remark} 
\newtheorem*{remark}{Remark}
\DeclareMathOperator*{\esssup}{ess\,sup}
\begin{document}

\title{On the number of eigenvalues of modified permutation matrices in mesoscopic intervals}

\author{Valentin B\textsc{ahier} \thanks{valentin.bahier@math.univ-toulouse.fr, Institut de Math\'ematiques de Toulouse, 118 route de Narbonne, F-31062 Toulouse Cedex 9, France.}}

\date{\today}

\maketitle

\begin{abstract}
We are interested in two random matrix ensembles related to permutations: the ensemble of permutation matrices following Ewens' distribution of a given parameter $\theta >0$, and its modification where entries equal to $1$ in the matrices are replaced by independent random variables uniformly distributed on the unit circle. For the elements of each ensemble, we focus on the random numbers of eigenvalues lying in some specified arcs of the unit circle. We show that for a finite number of fixed arcs, the fluctuation of the numbers of eigenvalues belonging to them is asymptotically Gaussian. Moreover, for a single arc, we extend this result to the case where the length goes to zero sufficiently slowly when the size of the matrix goes to infinity. Finally, we investigate the behaviour of the largest and smallest spacing between two distinct consecutive eigenvalues.
\end{abstract}

\section{Introduction}

\subsection{Random permutation matrices}

The spectrum of random permutation matrices has drawn much attention the last few decades. On the one hand, working with matrices brings a new approach for understanding the structure of permutation groups. On the other hand, the sets of permutation matrices can be seen as finite subgroups of orthogonal groups or unitary groups, and thus their studies give the opportunity to see how much of the structure of larger groups emerges through these finite subgroups.

To make it clear, let us recall the few following definition and facts: \\
A \textbf{permutation matrix} is a square matrix that has exactly one entry equal to $1$ in each row and each column and $0$'s elsewhere. Since such a matrix is in particular unitary, its spectrum is included in the unit circle. There is a correspondence between $\mathfrak{S}_N$ (the set of permutations of order $N$) and the set of permutation matrices of size $N$. The spectrum of any permutation matrix is completely determined by the \textbf{cycle structure} of its corresponding permutation. In other words, this spectrum is a function of the numbers of cycles of same size when one decomposes the permutation into disjoint cycles. Besides, when a permutation is uniformly chosen at random, the joint distribution of these cycle counts is known (see \cite{arratia2003logarithmic}, Chapter 1). Wieand took advantage of this to investigate the asymptotic behaviour of the counting function for the eigenvalues of permutation matrices lying in some fixed arcs on the unit circle \cite{wieand2000eigenvalue}, and also for some wreath products involving $\mathfrak{S}_N$ \cite{wieand2003permutation}, under the uniform distribution.
In addition, the work of Blair-Stahn \cite{blair2000random} revealed how difficult it is to compute the limiting expectation of the counting function for a shrinking interval of type $\left( \mathrm{e}^{2i\pi a } , \mathrm{e}^{2i\pi \left( a+\frac{b}{N} \right)} \right]$ on the unit circle, with $a,b$ fixed real numbers, and in particular for some special $a$ there is not known explicit expression.
 
The uniform distribution on $\mathfrak{S}_N$ has not been the only one studied. Indeed, the use of Ewens measures is quite natural \cite{ewens1972sampling} and very convenient to study in some mathematical aspects (see \cite{arratia2003logarithmic} and \cite{chafai2013processus} for a definition and related results).
Heuristically, the Ewens measures are one-parameter deformations of the uniform distribution, where the parameter (usually denoted by $\theta >0$) influences the expected total number of cycles in the decomposition into disjoint cycle of a randomly chosen permutation. Ben Arous and Dang \cite{ben2015fluctuations} tackled this family of measures over permutation matrices and gave some asymptotic results for linear statistics of their spectrum (not only the counting function).

A classical motivation for the study of Ewens measures can be found in population genetics, where the distribution of the $n$-tuple whose the $i$-th coordinate counts the number of alleles represented $i$ times in a random sample of $n$ gametes (taken from a population under certain conditions), is exactly the distribution obtained considering the integer partition induced by a permutation under a certain Ewens measure on $\mathfrak{S}_n$. In this framework the parameter $\theta$ of the considered Ewens measure plays the role of a \emph{population mutation rate}.
(see Ewens'~sampling formula \cite{ewens1972sampling})

Furthermore, wreath products have some applications in group theory (\emph{e.g.} finding the exhaustive list of Sylow groups from a given finite permutation group) and in graph theory (\emph{e.g.} looking at automorphisms on regular rooted trees (see for example \cite{evans2002eigenvalues})). Basically, introduction of randomness in these ensembles aims to have a better understanding of their structure. 

Before stating in which way we want to extend the results of Wieand and Blair-Stahn in this article, let us mention some other relative work, such as the study of characteristic polynomial of random permutation matrices by Hambly, Keevash O'Connell, Stark \cite{hambly2000characteristic} or of more generally multiplicative class functions for some wreath products by Zeindler \emph{et al.} \cite{zeindler2010permutation} \cite{zeindler2013central} \cite{dang2014characteristic} \cite{dehaye2013averages}. We can also mention the various results of Najnudel and Nikeghbali \cite{najnudel2013distribution} for the point process of eigenvalues where a meaning of almost sure convergence of the empirical spectral measure is made precise for some modified random permutation matrices.

Now, we introduce the way we shall continue some of the previous works:
\begin{itemize}
	\item We look at the counting function of eigenvalues for the ensemble of permutation matrices and the wreath product $S^1 \wr \mathfrak{S}_N$ (where $S^1$ is the group of complex numbers of modulus $1$) endowed with Ewens measures. Our motivation for studying this particular wreath product is twofold: its spectral distribution is quite more convenient to study, and it brings closer the analogy with the Circular Unitary Ensemble. Indeed, in contrast with the ensemble of permutation matrices, the distribution of eigenvalues for $S^1 \wr \mathfrak{S}_N$ is invariant by rotation. 
	\item We take advantage of some tools introduced in the articles of Wieand \cite{wieand2000eigenvalue} and Ben Arous, Dang \cite{ben2015fluctuations} and develop them in our framework.
	\item We also investigate the counting function at an intermediate scale (mesoscopic) between macroscopic and microscopic scales, where the observed number of eigenvalues still tends to infinity when $N$ goes to infinity. The study of its fluctuations is motivated by comparison with an analogous result of Bourgade which is given as a consequence of Theorem~$1.4$ in \cite{bourgade2010mesoscopic}, for unitary matrices.
\end{itemize}

\subsection{Notations and main results}

For all real numbers $x$, we denote by $\lfloor x \rfloor$ the floor of $x$, $\lceil x \rceil$ the ceiling of $x$, and $\{x\} = x - \lfloor x \rfloor$ the fractional part of $x$. \\
If $(u_n)$,$(v_n)$ are sequences of real numbers such that $(v_n)$ is positive, and if $x$ is a real number, we will write $u_n = \mathcal{O}_x (v_n)$ or $u_n \ll_x v_n$ when there exists a constant $C_x$ such that for all $n$, $\vert u_n \vert \leq C_x v_n$. \\
Let $\theta >0$. Let $\left( \sigma_N \right)_{N \geq 1}$ be a sequence of random permutations following Ewens measure of parameter $\theta$. 
Formally, it means that for all $N$, $\sigma_N$ takes values in $\mathfrak{S}_N$ and 
\[\forall \sigma \in \mathfrak{S}_N, \ \mathbb{P} (\sigma_N = \sigma ) = \mathbb{P}_\theta^{(N)} (\sigma ) = \frac{\theta^{K(\sigma)}}{\theta (\theta + 1) \cdots (\theta + N-1 )}\]
where $K(\sigma)$ denotes the total number of cycles of $\sigma$ once decomposed into disjoint cycles.
Let $\left( z_j \right)_{j \geq 1}$ be a sequence of i.i.d random variables uniformly distributed on the unit circle, independent of $\left( \sigma_N \right)_{N \geq 1}$. For all $N \geq 1$, we define $M_N$ and $\widetilde{M}_N$ as the $N$-by-$N$ matrices whose entries are given by:
\[\forall 1\leq i,j \leq N , \quad \left\{ \begin{array}{rl}
(M_N)_{i,j} &:= \mathds{1}_{i = \sigma_N (j)  } \\
(\widetilde{M}_N)_{i,j} &:= z_i\mathds{1}_{i = \sigma_N (j) }.
\end{array}\right.   \]
In all the following we identify the ensemble of permutation matrices of order $N$ and the symmetric group $\mathfrak{S}_N$, and consider the outputs of $\widetilde{M}_N$ as elements of the wreath product of $S^1$ and $\mathfrak{S}_N$, denoted by $S^1 \wr \mathfrak{S}_N$.  

The elements of both ensembles are in particular unitary matrices, and thus their eigenvalues belong to the unit circle. 

Then, the question of the asymptotic behaviour of the distribution of these eigenvalues arises naturally, in particular if one wants to compare them with some known results on other random matrix ensembles.

To this purpose, let $I:= \left( \mathrm{e}^{2i\pi \alpha} , \mathrm{e}^{2i\pi \beta} \right]$ the interval which denotes the arc on the unit circle from $\mathrm{e}^{2i\pi \alpha}$ (excluded) to $\mathrm{e}^{2i\pi \beta}$ (included), with $0\leq \alpha <1$ and $\alpha < \beta \leq \alpha +1$. We take it half-open for practical reason. The conditions on $\alpha$ and $\beta$ are sufficient to take whatever (half-open) interval of the unit circle. For $N\geq 1$, we define $X_N^I$ and $\widetilde{X}_N^I$ as the respective numbers of eigenvalues of $M_N$ and $\widetilde{M}_N$ lying in $I$. 

\begin{lemma}
Let $s,t,u,v \in \mathbb{R}$. The following limits exist, are finite, and can be explicitly computed.
\[c (s, t, u, v) :=\lim_{N\to \infty} \frac{1}{N} \sum_{j=1}^N  (\{js\} - \{jt\} )(\{ju\} - \{jv\} ).\]
\[\widetilde{c} (s, t, u, v) := \lim_{N\to \infty} \frac{1}{2N} \sum_{j=1}^N  (h_j (t - u) + h_j (s - v) - h_j (s - u) - h_j (t - v))\]
with $h_j (x) := \{jx \}(1- \{jx\})$.
\end{lemma}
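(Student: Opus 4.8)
The plan is to reduce both limits to a single fundamental computation: for a fixed real number $w$, evaluate
\[
\lim_{N\to\infty} \frac{1}{N} \sum_{j=1}^N \{jw\}
\quad\text{and}\quad
\lim_{N\to\infty} \frac{1}{N} \sum_{j=1}^N \{jw\}^2,
\]
distinguishing the cases $w\in\mathbb{Q}$ and $w\notin\mathbb{Q}$. When $w$ is irrational, Weyl equidistribution of $(jw)_{j\ge1}$ modulo $1$ gives the limits $\int_0^1 x\,dx = \tfrac12$ and $\int_0^1 x^2\,dx = \tfrac13$. When $w=p/q$ in lowest terms, the sequence $(\{jw\})_j$ is periodic with period $q$ and takes each value $0/q,1/q,\dots,(q-1)/q$ equally often, so the Cesàro limits are $\tfrac{1}{q}\sum_{k=0}^{q-1}\tfrac{k}{q} = \tfrac{q-1}{2q}$ and $\tfrac{1}{q}\sum_{k=0}^{q-1}\tfrac{k^2}{q^2} = \tfrac{(q-1)(2q-1)}{6q^2}$. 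These are elementary but must be recorded carefully since the final formulas for $c$ and $\widetilde c$ will be case-dependent on the rationality of the relevant differences.

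**Key steps for $\widetilde c$.** This is the easier of the two. First I would expand the summand using $h_j(x)=\{jx\}-\{jx\}^2$, so that
\[
\widetilde c(s,t,u,v) = \frac{1}{2}\lim_{N\to\infty}\frac{1}{N}\sum_{j=1}^N\Bigl(\{j(t-u)\}+\{j(s-v)\}-\{j(s-u)\}-\{j(t-v)\}\Bigr) - \frac{1}{2}\lim_{N\to\infty}\frac{1}{N}\sum_{j=1}^N\Bigl(\cdots^2\Bigr),
\]
then apply the fundamental computation termwise to each of the four arguments $t-u$, $s-v$, $s-u$, $t-v$. The existence and finiteness are immediate once each of the eight individual Cesàro limits is shown to exist, and the explicit value is assembled from the two case-formulas above applied to each difference.

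**Key steps for $c$, and the main obstacle.** Here I would expand $(\{js\}-\{jt\})(\{ju\}-\{jv\}) = \{js\}\{ju\}-\{js\}\{jv\}-\{jt\}\{ju\}+\{jt\}\{jv\}$, reducing matters to limits of the form $\lim_{N\to\infty}\frac1N\sum_{j=1}^N\{ja\}\{jb\}$ for real $a,b$. The anticipated main obstacle is precisely this bilinear/cross term: unlike the linear and pure-square cases, $\frac1N\sum\{ja\}\{jb\}$ requires understanding the joint distribution of $(\{ja\},\{jb\})$ modulo $1$. The clean way is to split into cases according to whether $1,a,b$ are rationally independent. If $1,a,b$ are $\mathbb{Q}$-linearly independent, then $(ja,jb)$ equidistributes in the $2$-torus and the limit is $\int_0^1\int_0^1 xy\,dx\,dy = \tfrac14$. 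If there is a rational relation, one writes $a=p_1/q$, $b=p_2/q$ over a common denominator (or handles the mixed rational–irrational sub-case via equidistribution of a single variable along an arithmetic progression of residues), and the limit becomes a finite average $\tfrac1q\sum_{k=0}^{q-1}\{kp_1/q\}\{kp_2/q\}$, which can be rewritten using the identity $\{x\}\{y\} = $ (a quadratic expression modulo the relevant lattice) — in practice one recognizes that only the differences $s-u$, $s-v$, $t-u$, $t-v$ enter after cancellation, and a telescoping/symmetry argument collapses the four cross terms. I would then verify that the resulting $c(s,t,u,v)$ has the expected bilinear-in-each-pair structure and matches, in the generic irrational regime, the known covariance $\min(\{\cdot\},\{\cdot\}) - \{\cdot\}\{\cdot\}$ type formula appearing in Wieand's work, which serves as a consistency check.
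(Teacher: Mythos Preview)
Your approach is essentially the same as the paper's: reduce $\widetilde c$ to Ces\`aro averages of $h_j(x)=\{jx\}(1-\{jx\})$ for a single fixed $x$ via equidistribution or periodicity, and reduce $c$ to the bilinear averages $\lim_N \frac{1}{N}\sum_{j=1}^N \{ja\}\{jb\}$ with a case split on the rational relations among $1,a,b$.

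One correction is needed, however: your claim that for $c$ ``only the differences $s-u,s-v,t-u,t-v$ enter after cancellation'' is false. Unlike $\widetilde c$, the limit $c(s,t,u,v)$ genuinely depends on the individual arithmetic of $s,t,u,v$ (for instance, the paper's Proposition~\ref{prop:EspVarX} and Appendix~B show that $c_2=c(\alpha,\beta,\alpha,\beta)$ changes according to whether $\alpha$ alone is rational). Relatedly, your case analysis for $\lim_N \frac{1}{N}\sum_j \{ja\}\{jb\}$ is incomplete: ``write $a=p_1/q$, $b=p_2/q$ over a common denominator'' covers only the case where both are rational, and the genuinely delicate sub-case---both $a,b$ irrational but linearly dependent over $\mathbb{Q}$, say $b=\frac{p}{q}+\frac{r}{s}a$---is neither of your two listed scenarios. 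The paper handles exactly this sub-case in Appendix~B via a rather lengthy explicit computation (reducing to one-dimensional equidistribution along residue classes mod $q$), and your sketch underestimates the work involved there.
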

We refer to Lemma~\ref{lem:Hcesaro} for a condensed version of this result and a proof, inspired from \cite{wieand2000eigenvalue} for the first limit and \cite{wieand2003permutation} for the second one.

The following theorem have already been established in \cite{wieand2000eigenvalue} and \cite{wieand2003permutation}, in the particular case $\theta=1$. Also, for $m=1$ and for random permutation matrices without modification, the result simply derives from Theorem~$1.5$ in \cite{ben2015fluctuations}. Furthermore, the third item can be deduced from Proposition~$1.2$ in \cite{dang2014characteristic} considering the imaginary part of the logarithm of the characteristic polynomial, for the specific case where the family $(1,\alpha_1, \cdots , \alpha_m, \beta_1, \cdots , \beta_m)$ is linearly independent over $\mathbb{Z}$.

\everymath{\displaystyle}
\begin{theorem}\label{th:1}
Let $I_1 , \cdots , I_m$ be a finite number of fixed arcs of the form $I_k := \left(\mathrm{e}^{2i\pi \alpha_k} , \mathrm{e}^{2i\pi \beta_k} \right]$. For $1\leq k,l \leq m$, denote $c_{k,l} := c (\alpha_k, \beta_k , \alpha_l , \beta_l)$ and $\widetilde{c}_{k,l} := \widetilde{c} (\alpha_k, \beta_k , \alpha_l , \beta_l )$. Then, as $N \to \infty$
\begin{enumerate} 
	\item \[\mathrm{Var} (X_N^{I_k}) \sim  c_{k,k} \theta \log N ,\quad \mathrm{Var} (\widetilde{X}_N^{I_k}) \sim  \widetilde{c}_{k,k} \theta \log N. \]
	\item \[\left( \frac{X_N^{I_1} - \mathbb{E} ( X_N^{I_1}) }{\sqrt{\mathrm{Var} (X_N^{I_1})}} , \cdots , \frac{X_N^{I_m} - \mathbb{E} ( X_N^{I_m}) }{\sqrt{\mathrm{Var} (X_N^{I_m})}}  \right) \overset{\text{d}}{\longrightarrow} \mathcal{N}(0,D)\]
	where $D=(D_{k,l})_{1\leq k , l \leq m}$ with $D_{k,l}= \frac{c_{k,l}}{\sqrt{c_{k,k} c_{l,l}}}$.
	\item  \[\left( \frac{\widetilde{X}_N^{I_1} - \mathbb{E} ( \widetilde{X}_N^{I_1}) }{\sqrt{\mathrm{Var} (\widetilde{X}_N^{I_1})}} , \cdots , \frac{\widetilde{X}_N^{I_m} - \mathbb{E} ( \widetilde{X}_N^{I_m}) }{\sqrt{\mathrm{Var} (\widetilde{X}_N^{I_m})}}  \right) \overset{\text{d}}{\longrightarrow} \mathcal{N}(0,\widetilde{D}) \]
	where $\widetilde{D}=(\widetilde{D}_{k,l})_{1\leq k , l \leq m}$ with $\widetilde{D}_{k,l}= \frac{\widetilde{c}_{k,l}}{\sqrt{\widetilde{c}_{k,k} \widetilde{c}_{l,l}}}$.	
\end{enumerate}
\end{theorem}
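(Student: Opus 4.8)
\emph{Reduction to cycle counts.}
The starting point is an exact combinatorial identity expressing both counting functions in terms of the cycle counts $a_j=a_j(\sigma_N)$. The $j$-th roots of unity contribute $\lfloor j\beta\rfloor-\lfloor j\alpha\rfloor=j(\beta-\alpha)+\{j\alpha\}-\{j\beta\}$ eigenvalues of $M_N$ to the arc $(\mathrm e^{2i\pi\alpha},\mathrm e^{2i\pi\beta}]$, so since $\sum_j j a_j=N$ one gets $X_N^I-N(\beta-\alpha)=\sum_{j=1}^N a_j(\{j\alpha\}-\{j\beta\})$. For $\widetilde M_N$ seen inside $S^1\wr\mathfrak S_N$, each $j$-cycle contributes the $j$-th roots of the product of the $z_i$ over the cycle, i.e. $j$ equally spaced points rotated by an independent uniform phase $U\in[0,1)$ (distinct cycles use disjoint blocks of the $z_i$, and a product of i.i.d. Haar variables is Haar); hence, conditionally on $\sigma_N$, $\widetilde X_N^I-N(\beta-\alpha)\overset{\text{d}}{=}\sum_{j\ge1}\sum_{i=1}^{a_j}\bigl(N_j(U_{j,i})-j(\beta-\alpha)\bigr)$, where the $U_{j,i}$ are i.i.d. uniform and $N_j(U)$ counts the points of $\{(U+m)/j:0\le m<j\}$ in $(\alpha,\beta]\bmod1$. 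A short computation with the discrepancy $D_j(x):=\#\{0\le m<j:\{(U+m)/j\}\in(0,x]\}-jx=\mathds 1[U\le\{jx\}]-\{jx\}$ shows $\mathbb E[D_j(x)]=0$ and $\mathrm{Cov}(D_j(x),D_j(y))=\min(\{jx\},\{jy\})-\{jx\}\{jy\}$; writing $N_j^{(k)}(U)-j(\beta_k-\alpha_k)=D_j(\beta_k)-D_j(\alpha_k)$ and expanding the four-term covariance with $\min(a,b)=\tfrac12(a+b-|a-b|)$, one finds it equals $\tfrac12\bigl(h_j(\beta_k-\alpha_l)+h_j(\alpha_k-\beta_l)-h_j(\alpha_k-\alpha_l)-h_j(\beta_k-\beta_l)\bigr)$; in particular $\mathbb E[N_j(U)]=j(\beta-\alpha)$, $\mathrm{Var}(N_j(U))=h_j(\beta-\alpha)$, and $|N_j(U)-j(\beta-\alpha)|\le1$.

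\emph{Poissonisation and variances.}
By the Feller coupling one realises $(a_j(\sigma_N))_{j\ge1}$ jointly with independent $Y_j\sim\mathrm{Poisson}(\theta/j)$ so that $\sum_j|a_j-Y_j|$ is negligible on the scale $\sqrt{\log N}$; since all the summands above are bounded by $1$ in modulus, replacing $a_j$ by $Y_j$ alters $X_N^I$ and $\widetilde X_N^I$ by $o_{\mathbb P}(\sqrt{\log N})$ and, by Cauchy--Schwarz, alters their variances by $o(\log N)$. It therefore suffices to analyse the sums of independent contributions $S_N:=\sum_{j\le N}Y_j(\{j\alpha\}-\{j\beta\})$ and the compound--Poisson sum $\widetilde S_N:=\sum_{j\le N}\sum_{i=1}^{Y_j}(N_j(U_{j,i})-j(\beta-\alpha))$. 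Independence gives $\mathrm{Var}(S_N)=\theta\sum_{j\le N}\tfrac1j(\{j\alpha\}-\{j\beta\})^2$ and $\mathrm{Var}(\widetilde S_N)=\theta\sum_{j\le N}\tfrac1j h_j(\beta-\alpha)$. Lemma~\ref{lem:Hcesaro} supplies the Cesàro limits $\tfrac1N\sum_{j\le N}(\{j\alpha\}-\{j\beta\})^2\to c(\alpha,\beta,\alpha,\beta)$ and $\tfrac1N\sum_{j\le N}h_j(\beta-\alpha)\to\widetilde c(\alpha,\beta,\alpha,\beta)$ (using that $h_j$ is even and $h_j(0)=0$); a summation-by-parts argument turns any Cesàro limit of bounded terms $u_j$ into $\sum_{j\le N}u_j/j\sim(\lim)\log N$, which yields item~1. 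The same computation with mixed products gives $\mathrm{Cov}(X_N^{I_k},X_N^{I_l})\sim c_{k,l}\theta\log N$ and $\mathrm{Cov}(\widetilde X_N^{I_k},\widetilde X_N^{I_l})\sim\widetilde c_{k,l}\theta\log N$; for a proper arc $c_{k,k}$ and $\widetilde c_{k,k}$ are strictly positive, so the normalisations are legitimate.

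\emph{Central limit theorems.}
For the joint statements I use the Cramér--Wold device. Fix $\lambda\in\mathbb R^m$. By the previous paragraph, $T_N:=\sum_k\lambda_k\bigl(X_N^{I_k}-\mathbb E X_N^{I_k}\bigr)/\sqrt{\mathrm{Var}(X_N^{I_k})}$ equals, up to an $o_{\mathbb P}(1)$ error, the sum over $j$ of the independent centred variables $(Y_j-\theta/j)g_j$ with $g_j:=\sum_k\lambda_k(\{j\alpha_k\}-\{j\beta_k\})/\sqrt{c_{k,k}\theta\log N}$, and $\max_j|g_j|=O((\log N)^{-1/2})$. Hence $\log\mathbb E\,\mathrm e^{itT_N}=\sum_j\tfrac\theta j(\mathrm e^{itg_j}-1-itg_j)=-\tfrac{t^2}2\sum_j\tfrac\theta j g_j^2+O\bigl(\max_j|g_j|\sum_j\tfrac\theta j g_j^2\bigr)$; since $\sum_j\tfrac\theta j g_j^2\to\lambda^\top D\lambda$ the error is $O((\log N)^{-1/2})\to0$, so $T_N\overset{\text{d}}{\longrightarrow}\mathcal N(0,\lambda^\top D\lambda)$, which is exactly what Cramér--Wold requires for item~2. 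Item~3 is proved identically with $\widetilde S_N$: the compound--Poisson structure gives $\log\mathbb E\,\mathrm e^{it\widetilde T_N}=\sum_j\tfrac\theta j(\mathbb E\,\mathrm e^{it\tilde g_j}-1)$ where $\tilde g_j:=\sum_k\lambda_k(N_j^{(k)}(U)-j(\beta_k-\alpha_k))/\sqrt{\widetilde c_{k,k}\theta\log N}$ satisfies $\mathbb E[\tilde g_j]=0$ and $|\tilde g_j|=O((\log N)^{-1/2})$, so the same second-order expansion — now feeding in $\mathbb E[\tilde g_j^2]$ and the covariance identity from the first paragraph — yields $\widetilde T_N\overset{\text{d}}{\longrightarrow}\mathcal N(0,\lambda^\top\widetilde D\lambda)$.

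\emph{Main obstacle.}
The genuinely delicate step is the Poissonisation: one must show, using the precise structure of the Feller coupling and Arratia--Barbour--Tavaré type estimates, that $\sum_{j\le N}(a_j(\sigma_N)-Y_j)$ weighted by the bounded coefficients above is $o_{\mathbb P}(\sqrt{\log N})$ and has $L^2$-norm $o(\sqrt{\log N})$, so that neither the variance asymptotics nor the limiting law sees the dependence between the true cycle counts; equivalently, that the Ewens covariances $\mathrm{Cov}(a_i,a_j)$ contribute only to lower order in $\mathrm{Var}(X_N^{I_k})$ and $\mathrm{Var}(\widetilde X_N^{I_k})$. Everything else — the combinatorial identities, the discrepancy and covariance computations, the summation by parts built on Lemma~\ref{lem:Hcesaro}, and the characteristic-function CLT for triangular arrays of independent (or compound--Poisson) summands — is routine.
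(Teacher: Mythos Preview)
Your argument tracks the paper's strategy closely: express the counts through cycle lengths, Poissonise via the Feller coupling, and prove the CLT for the resulting independent triangular array through Cram\'er--Wold. Two points of divergence are worth noting. First, for item~1 the paper does \emph{not} pass through Poissonisation: for $\widetilde X_N^I$ it derives the exact formula $\mathrm{Var}(\widetilde X_N^I)=\theta\sum_j\tfrac{\Psi_N(j)}{j}h_j(\beta-\alpha)$ by conditioning on $\sigma_N$ and then extracts the asymptotic via the fractional-order Ces\`aro machinery (Propositions~\ref{prop:cesaro} and~\ref{prop:log}); for $X_N^I$ it quotes the analogous exact formula and its asymptotic from \cite{wieand2000eigenvalue,ben2015fluctuations}, where the cycle-count covariance double sum is handled directly (cf.\ Lemma~\ref{lem:borne}). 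Your route---Poissonise first, obtain the clean $\mathrm{Var}(S_N)=\theta\sum_{j\le N}w_j/j$, then ordinary summation by parts---is more economical and bypasses the $\Psi_N$ apparatus, but the Cauchy--Schwarz step requires $\bigl\|\sum_j(a_j-Y_j)\omega_j\bigr\|_2=o(\sqrt{\log N})$, whereas Lemma~\ref{lem:majPoisson} only supplies the $L^1$ bound $O_\theta(1)$; you are right to flag this as the real obstacle, but be aware that the paper sidesteps it entirely rather than proving an $L^2$ coupling estimate. Second, for items~2--3 the paper verifies the Lindeberg condition on the Poissonised array by explicit truncated-moment bounds on $W_j$, while you expand the Poisson (resp.\ compound-Poisson) log-characteristic function directly; both are routine once the coupling reduction is in place, and your version is arguably cleaner.
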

 
Most of the innovative work in this paper holds in the following main result.
 
\begin{theorem}
\label{th:2}
Assume $I$ to be depending on $N$, of the form $I=I_N:=  \left(\mathrm{e}^{2i\pi \alpha_N} , \mathrm{e}^{2i\pi \beta_N} \right]$. \\ Denote $\delta_N := \beta_N - \alpha_N >0$. Suppose that the sequence $(\delta_N)$ satisfies \[\left\{\begin{array}{l} 
\delta_N \underset{N\to \infty}{\longrightarrow} 0 \\
N \delta_N \underset{N\to \infty}{\longrightarrow} + \infty .
\end{array} \right.\]
\begin{enumerate}
	\item Then, as $N \to \infty$
\[  \mathrm{Var} (\widetilde{X}_N^{I}) \sim \frac{\theta}{6} \log(N\delta_N) \] and
\[ \frac{\widetilde{X}_N^I - \mathbb{E} ( \widetilde{X}_N^I) }{\sqrt{\mathrm{Var} (\widetilde{X}_N^I)}}  \overset{\text{d}}{\longrightarrow} \mathcal{N}(0,1) .\]
	\item Suppose in addition that the sequence $(\alpha_N)$ is constant, say $\alpha_N=\alpha$ for all $N$. Then, as $N \to \infty$
\[\mathrm{Var} (X_N^I) \sim   
\left\{\begin{array}{ll}
\frac{\theta}{6} \log(N\delta_N)  & \text{if } \alpha \text{ is irrationnal}\\
\theta \left(\frac{1}{6} + \frac{1}{6q^2}\right) \log(N\delta_N) & \text{if } \alpha=\frac{p}{q} \text{ with } p,q \text{ coprime integers}
\end{array}\right.	\]
and
\[ \frac{X_N^I - \mathbb{E} (X_N^I) }{\sqrt{\mathrm{Var} (X_N^I)}}  \overset{\text{d}}{\longrightarrow} \mathcal{N}(0,1) .\]
\end{enumerate}
\end{theorem}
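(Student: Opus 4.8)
The plan is to make both statements concrete by writing the two counting functions explicitly in terms of the cycle counts of $\sigma_N$ (plus, in the modified case, some extra independent randomness), and then to feed in the classical facts about Ewens cycle counts: the Poisson limits, the near‑independence of the $a_\ell$, and the Feller coupling. Write $a_\ell=a_\ell^{(N)}$ for the number of $\ell$-cycles of $\sigma_N$. The spectrum of $M_N$ is the union over the cycles of the associated sets of roots of unity, so an $\ell$-cycle block contributes $\lfloor\ell\beta_N\rfloor-\lfloor\ell\alpha_N\rfloor$ eigenvalues to $I=I_N$; using $\sum_\ell \ell a_\ell=N$ this gives
\[ X_N^{I}=N\delta_N-\sum_{\ell=1}^N a_\ell\,g_\ell,\qquad g_\ell:=\{\ell\beta_N\}-\{\ell\alpha_N\}, \]
hence $X_N^{I}-\mathbb E[X_N^{I}]=-\sum_{\ell=1}^N(a_\ell-\mathbb E a_\ell)g_\ell$. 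For $\widetilde M_N$, an $\ell$-cycle block has its $\ell$ eigenvalues equal to a copy of the $\ell$-th roots of unity rotated by an independent uniform angle, so, conditionally on $\sigma_N$, the number of them in $I_N$ is $\lfloor\ell\delta_N\rfloor+B_{\ell,m}$ where the $B_{\ell,m}$ (one per cycle) are independent Bernoulli variables of parameter $\{\ell\delta_N\}$; hence $\mathbb E[\widetilde X_N^{I}]=N\delta_N$ and
\[ \widetilde X_N^{I}-N\delta_N=\sum_{\ell=1}^N\sum_{m=1}^{a_\ell}\bigl(B_{\ell,m}-\{\ell\delta_N\}\bigr). \]

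For the variances, the law of total variance gives $\mathrm{Var}(\widetilde X_N^{I})=\sum_\ell\mathbb E[a_\ell]\,\{\ell\delta_N\}(1-\{\ell\delta_N\})$, and, using the Ewens estimates $\mathbb E[a_\ell]\sim\theta/\ell$ for $\ell\ll N$ together with the fact that $\mathrm{Cov}(a_\ell,a_{\ell'})$ for $\ell\neq\ell'$ is small enough that its total $g$-weighted contribution is of smaller order than $\log(N\delta_N)$, one also gets $\mathrm{Var}(X_N^{I})=\sum_\ell\tfrac\theta\ell g_\ell^2+o(\log(N\delta_N))$. Both are then weighted harmonic sums $\sum_{\ell\le N}\tfrac\theta\ell\varphi(\ell)$ with $\varphi(\ell)\in\{\{\ell\delta_N\}(1-\{\ell\delta_N\}),\,g_\ell^2\}$, and I would split at $\ell\asymp1/\delta_N$: on $\ell\lesssim1/\delta_N$ one has $\varphi(\ell)=O(\ell\delta_N)$ so the contribution is $O(1)$; on $1/\delta_N\lesssim\ell\le N$ I would expand $\varphi$ in Fourier series (using $\{x\}(1-\{x\})=\tfrac16-\tfrac1{\pi^2}\sum_{n\ge1}n^{-2}\cos2\pi nx$ and the analogous expansions of $\{x\}$ and $\{x\}^2$) and estimate, by partial summation, the sums $\sum_\ell\ell^{-1}\cos(2\pi n\ell\gamma)$ for the relevant $\gamma\in\{\alpha,\beta_N,\delta_N\}$: frequencies with $\|n\gamma\|$ bounded away from $0$ give a convergent bounded remainder, while resonant frequencies ($n\gamma\in\mathbb Z$) produce the $\log$-sized main terms. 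For $\widetilde X_N^{I}$ only $\gamma=\delta_N\to0$ occurs, so all its resonances sit at $n\gtrsim1/\delta_N$ and contribute $o(\log(N\delta_N))$, leaving the constant $\tfrac16$, with surviving logarithm $\sum_{1/\delta_N\lesssim\ell\le N}\ell^{-1}\sim\log(N\delta_N)$. For $X_N^{I}$ with $\alpha_N=\alpha$ fixed, the resonances of $\{\ell\alpha\}$ survive: when $\alpha=p/q$ they occur at every multiple of $q$, and combining the three pieces of $g_\ell^2=\{\ell\beta_N\}^2-2\{\ell\beta_N\}\{\ell\alpha\}+\{\ell\alpha\}^2$ one obtains the Cesàro average $\tfrac13-2\cdot\tfrac12\cdot\tfrac{q-1}{2q}+\tfrac{(q-1)(2q-1)}{6q^2}=\tfrac16+\tfrac1{6q^2}$, whereas when $\alpha$ is irrational everything equidistributes and the average is $\tfrac16$; in both cases the logarithm is again $\log(N\delta_N)$.

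For the central limit theorems I would argue as follows. For $\widetilde X_N^{I}$, conditionally on $\sigma_N$ the quantity $\widetilde X_N^{I}-N\delta_N$ is a sum of independent, $[-1,1]$-valued, centred variables with conditional variance $V_N:=\sum_\ell a_\ell\{\ell\delta_N\}(1-\{\ell\delta_N\})$; one checks $V_N/\mathbb E[V_N]\to1$ in probability (a weak law of large numbers for this weighted cycle-count sum, via the Poisson limits) and that the conditional Lindeberg condition holds, so the conditional — and hence the unconditional — distribution converges to $\mathcal N(0,1)$, and replacing $\sqrt{\mathbb E[V_N]}$ by $\sqrt{\mathrm{Var}(\widetilde X_N^{I})}$ is harmless by the variance asymptotics. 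For $X_N^{I}$ I would use the Feller coupling to realize $(a_\ell^{(N)})_\ell$ together with independent random variables so that $\sum_\ell g_\ell(a_\ell^{(N)}-\mathbb E a_\ell)$ is, up to a negligible error, a sum of independent terms; since $|g_\ell|\le1$ and $\sum_\ell g_\ell^2/\ell\to\infty$, a Lyapunov condition holds and the normalized sum is asymptotically standard Gaussian.

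I expect the main obstacle to be the variance step: extracting the exact constants $\tfrac16$ and $\tfrac16+\tfrac1{6q^2}$ \emph{with an error that is genuinely $o(\log(N\delta_N))$} forces the Fourier/equidistribution estimates to be uniform in $N$ even though $\alpha_N,\beta_N,\delta_N$ vary and may be badly approximable. The delicate points are the truncation at the scale $1/\delta_N$ (where a full $\tfrac\theta6\log(1/\delta_N)$ of the naive main term must cancel) and the careful bookkeeping of which resonant frequencies survive in the limit — in particular, that a rational $\delta_N\to0$ necessarily has a denominator tending to infinity, so only a \emph{fixed} rational $\alpha=p/q$ can leave a surviving correction.
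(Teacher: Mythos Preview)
Your overall strategy is correct and would yield a proof, but it differs from the paper's in two places and contains one slip worth flagging.

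\textbf{Variance.} The paper does \emph{not} use Fourier expansion or resonance analysis. After writing $\mathrm{Var}(\widetilde X_N^{I})=\theta\sum_{j\le N}\frac{\Psi_N(j)}{j}\{j\delta_N\}(1-\{j\delta_N\})$ and reducing to $\theta=1$ (the error $\sum_j j^{-1}|\Psi_N(j)-1|$ is $O_\theta(1)$), it splits at $\lceil1/\delta_N\rceil$, treats the initial block as a Riemann sum, and on $[1/\delta_N,N]$ applies Abel summation followed by grouping the inner sums $\sum_{k\le j}\chi_N(k)$ into full periods of length $\approx1/\delta_N$ plus a boundary remainder. Each full period contributes essentially $\delta_N^{-1}\int_0^1 x(1-x)\,dx$, and resumming gives $\tfrac16\log(N\delta_N)+O(1)$. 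The same periodic--grouping scheme, with $f(x,y)=(x-\mathds 1_{x\ge1-y})^2$ in place of $x(1-x)$ and either a weak--convergence lemma on the torus $(\mathbb R/\mathbb Z)^2$ (irrational $\alpha$) or a further grouping into blocks of length $q$ (rational $\alpha=p/q$), handles $X_N$. This is entirely elementary and sidesteps the uniformity problem you correctly identify as the hard part of your route: no oscillatory sum $\sum_\ell\ell^{-1}\cos(2\pi n\ell\gamma)$ ever has to be estimated uniformly in a moving frequency $n\delta_N$. Your Fourier approach can be pushed through, but the paper's method is both shorter and more robust here.

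\textbf{A slip.} Your claim ``on $\ell\lesssim1/\delta_N$ one has $\varphi(\ell)=O(\ell\delta_N)$'' is fine for $\{\ell\delta_N\}(1-\{\ell\delta_N\})$ but \emph{false} for $g_\ell^2$: when $\ell<1/\delta_N$ one has $g_\ell=\ell\delta_N-\mathds 1_{\{\ell\alpha\}\ge1-\ell\delta_N}$, so $g_\ell^2\approx1$ at every wraparound. The initial block is still $O(1)$, but establishing that requires an equidistribution argument (as the paper does), not a pointwise bound.

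\textbf{CLT.} For $X_N$ your plan coincides with the paper's (Feller coupling, then a triangular--array CLT; your Lyapunov condition does work, since $\sum_j g_j^4\,\mathbb E(W_j-\theta/j)^4\lesssim\sum_j g_j^2/j=O(\log(N\delta_N))$). For $\widetilde X_N$ you propose a conditional CLT given $\sigma_N$ together with $V_N/\mathbb E[V_N]\to1$; this is valid (the summands are bounded by $1$ and the conditional variance diverges in probability), but the paper instead reuses the Feller coupling, writing $\widetilde X_N-N\delta_N\approx\sum_j\sum_{p\le W_j}b_{j,p}$ and applying Lindeberg--Feller to the independent array $V_{N,j}=(\log(N\delta_N))^{-1/2}\sum_{p\le W_j}b_{j,p}$. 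Your route is more probabilistic and avoids the coupling for the modified ensemble; the paper's has the advantage of treating both ensembles uniformly.
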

\everymath{}

The article is organized as follows: In section~\ref{sec:2} we begin with preliminary results about Cesàro means and Feller Coupling. In sections~\ref{sec:3} and \ref{sec:4} we prove Theorems~\ref{th:1} and \ref{th:2}, investigating the asymptotic behaviour of the mean and variance of the considered sequences of random variables. In section~\ref{sec:5} we look at the extremal spacings between two consecutive eigenvalues and establish some results of tightness. This last section is independent of the sections~\ref{sec:3} and \ref{sec:4}.

\section{Preliminaries}
\label{sec:2}

\subsection{Cesàro means of fractional order}

We set up here a few results (highly inspired from \cite{zygmund2002trigonometric} Volume 1 chapter 3, and \cite{ben2015fluctuations}) about Cesàro means of fractional order, that we will use to investigate the asymptotical behaviour of the variance.

\begin{definition}
For all $1\leq j\leq n$,
\begin{equation}\label{eq:defPsi}
\Psi_n (j) := \frac{n(n-1) \cdots (n-j+1) }{(\theta + n-1) \cdots (\theta + n-j)}. 
\end{equation}
\end{definition}

\begin{definition}
\begin{enumerate}
	\item The \textbf{Cesàro numbers} of order $\delta \in \mathbb{R}\setminus \{-1,-2,\cdots \}$ are given by \[A_n^\delta := \binom{n+\delta}{n} = \frac{(n+\delta) \cdots (1+ \delta)}{n!}.  \]
	\item The \textbf{Cesàro mean} of order $\theta >0$ of the sequence $w=(w_j)_{j\geq 0}$ is given by \[\sigma_n^\theta (w) := \sum_{j=0}^n \frac{A_{n-j}^{\theta -1}}{A_n^{\theta}} w_j. \]
	\item A sequence of real numbers $w=(w_j)_{j\geq 0}$ is said to be \textbf{convergent in Cesàro sense} of order $\theta$ (and will be denoted by $(C, \theta)$) to a limit $\ell$ iff $\sigma_n^\theta (w) \underset{n\to \infty}{\longrightarrow} \ell$.
\end{enumerate}
\end{definition}

\begin{remark}
If $w_0=0$, then the Cesàro mean of the sequence $(w_j)$ can be reformulated as
\begin{equation}
\sigma_n^\theta (w) = \sum_{j=1}^n \frac{A_{n-j}^{\theta -1}}{A_n^{\theta}} w_j = \frac{\theta}{\theta + n} \sum_{j=1}^n \Psi_n (j) w_j.  
\end{equation}
\end{remark}

\begin{lemma}[\cite{zygmund2002trigonometric} Vol I, page 74]
\label{lem:regular}
Let $M=(M_{i,j})_{\substack{1\leq j\leq i}}$ a lower-triangular infinite stochastic matrix satisfying 
\[\forall j \geq 1, \ \lim_n M_{n,j} = 0. \]
Let $s=(s_1, \dots , s_n , \dots )^T \in \mathbb{R}^\mathbb{N}$ and let $t=Ms$. If $s_n \rightarrow L$, then $t_n \rightarrow L$.
\end{lemma}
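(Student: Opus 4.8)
The plan is to verify that the lower-triangular stochastic matrix $M$ defines a regular (permanent) summation method, i.e. that it preserves limits of convergent sequences; this is the classical Toeplitz–Silverman criterion specialized to the case at hand. First I would write out explicitly what $t_n = (Ms)_n$ is, namely $t_n = \sum_{j=1}^n M_{n,j} s_j$, and note that the stochasticity hypothesis gives $M_{n,j}\geq 0$ and $\sum_{j=1}^n M_{n,j} = 1$ for every $n$, while the extra hypothesis gives $M_{n,j}\to 0$ as $n\to\infty$ for each fixed $j$. These are precisely the three conditions (nonnegativity, row sums converging to $1$—here exactly $1$—and columns tending to $0$) needed for regularity, so the statement reduces to proving the Toeplitz lemma under these assumptions.

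The key steps, in order: (1) Fix $\varepsilon>0$ and choose $J$ so that $|s_j - L|\leq \varepsilon$ for all $j\geq J$; such a $J$ exists because $s_n\to L$. (2) Using $\sum_{j=1}^n M_{n,j}=1$, write
\[
t_n - L = \sum_{j=1}^n M_{n,j}(s_j - L) = \sum_{j=1}^{J-1} M_{n,j}(s_j - L) + \sum_{j=J}^n M_{n,j}(s_j - L).
\]
(3) Bound the second sum by $\varepsilon \sum_{j=J}^n M_{n,j} \leq \varepsilon \sum_{j=1}^n M_{n,j} = \varepsilon$, using nonnegativity and the normalization. (4) Bound the first (finite, fixed-length) sum: since it has only $J-1$ terms and $M_{n,j}\to 0$ as $n\to\infty$ for each of these fixed $j$, while $|s_j - L|$ is a fixed finite constant for each such $j$, we get $\sum_{j=1}^{J-1} M_{n,j}|s_j - L| \to 0$; hence it is $\leq \varepsilon$ for all $n$ large enough, say $n\geq N_0$. (5) Combine: for $n\geq N_0$, $|t_n - L|\leq 2\varepsilon$. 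Since $\varepsilon$ was arbitrary, $t_n\to L$.

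There is really no substantive obstacle here — the only mild point of care is step (4), where one must use that the truncation index $J$ is frozen \emph{before} letting $n\to\infty$, so that the first sum is a finite linear combination of sequences each tending to $0$; one should also note that $s$ being convergent implies it is bounded, which is what guarantees each $|s_j-L|$ is finite (trivially true termwise, but worth keeping in mind if one prefers to bound the first sum by $\bigl(\sup_j |s_j-L|\bigr)\sum_{j=1}^{J-1}M_{n,j}$ instead). Since the paper cites this as a known result from Zygmund, I would keep the write-up to essentially the four-line Toeplitz argument above, or simply reference \cite{zygmund2002trigonometric} and omit the proof entirely; if included, the display above plus the two bounding inequalities is the whole content.
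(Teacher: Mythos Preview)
Your proof is correct and is precisely the classical Toeplitz--Silverman argument. The paper itself gives no proof of this lemma: it is stated with a citation to Zygmund and used as a black box, so your final suggestion (reference \cite{zygmund2002trigonometric} and omit the proof, or include the four-line argument) matches exactly what the paper does.
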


The next proposition is a particular case of the Lemma 2.27 from \cite{zygmund2002trigonometric} Vol II page 70.

\begin{proposition}\label{prop:cesaro}
If a sequence $(w_n)$ of real numbers is bounded and converging $(C,1)$ to a real number $\ell$, then it converges $(C,\delta)$ to $\ell$ for all $\delta >0$.
\end{proposition}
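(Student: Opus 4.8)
The plan is to split the statement into two regimes: the \emph{consistency} regime $\delta\ge 1$, handled by a soft averaging argument, and the regime $0<\delta<1$, which carries the real (Tauberian) content and is the only place where the boundedness of $(w_n)$ is used.

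For the first regime I would prove, more generally, that for any $0<\delta_1<\delta_2$, $(C,\delta_1)$-convergence to $\ell$ forces $(C,\delta_2)$-convergence to $\ell$. Writing $S_n^{\gamma}:=A_n^{\gamma}\sigma_n^{\gamma}(w)=\sum_{j=0}^n A_{n-j}^{\gamma-1}w_j$ and using the convolution identity $\sum_{i=0}^m A_{m-i}^{a}A_i^{b}=A_m^{a+b+1}$, one gets $S_n^{\delta_2}=\sum_{k=0}^n A_{n-k}^{\delta_2-\delta_1-1}S_k^{\delta_1}$, hence
\[
\sigma_n^{\delta_2}(w)=\sum_{k=0}^n M_{n,k}\,\sigma_k^{\delta_1}(w),\qquad M_{n,k}:=\frac{A_{n-k}^{\delta_2-\delta_1-1}A_k^{\delta_1}}{A_n^{\delta_2}}.
\]
Since $\delta_2-\delta_1-1>-1$ and $\delta_1,\delta_2>0$, every Cesàro number appearing is positive, so $M$ is a lower-triangular, entrywise nonnegative matrix; its rows sum to $1$ by the same identity, and $M_{n,k}\to 0$ as $n\to\infty$ for each fixed $k$ because $A_{n-k}^{\delta_2-\delta_1-1}/A_n^{\delta_2}\asymp n^{-\delta_1-1}$. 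Lemma~\ref{lem:regular} then gives the claim, and taking $\delta_1=1$, $\delta_2=\delta$ disposes of all $\delta\ge1$ (no boundedness is needed here).

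The remaining case $0<\delta<1$ is the crux. The algebraic identity above still reads $\sigma_n^{\delta}(w)=\frac{1}{A_n^{\delta}}\sum_{k=0}^n A_{n-k}^{\delta-2}(k+1)\,\sigma_k^{1}(w)$, but now the coefficients $A_{n-k}^{\delta-2}$ change sign and the absolute row sums of the corresponding matrix grow like $n^{1-\delta}\to\infty$, so neither Lemma~\ref{lem:regular} nor the Silverman--Toeplitz criterion applies: moving \emph{downwards} on the Cesàro scale is genuinely Tauberian, which is exactly why $(w_n)$ must be assumed bounded. At this point I would invoke the quoted result directly: the statement is the particular case $\alpha=1$, $\beta=\delta$ of Lemma~2.27 of \cite{zygmund2002trigonometric} (Vol.~II, p.~70), that a bounded sequence summable $(C,\alpha)$ for some $\alpha>0$ is summable $(C,\beta)$ for every $\beta>0$, to the same limit. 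If one wanted to be self-contained, the idea would be to split the sum at a fixed index $K$: the first $K{+}1$ terms are $O(K^2 n^{\delta-2})=o(A_n^{\delta})$ using boundedness of $(\sigma_k^1(w))_k$, while the tail is controlled via the cancellation carried by the absolutely summable weights $A_m^{\delta-2}$ with partial sums $\sum_{m\le M}A_m^{\delta-2}=A_M^{\delta-1}\to0$, together with the Tauberian bound $|S_{n-m}^{1}-S_n^{1}|\le m\sup_j|w_j|$. This bookkeeping of the cancellation is the main obstacle, and is precisely what the cited lemma packages; I would not reproduce it.
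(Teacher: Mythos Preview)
Your proposal is correct. The paper itself gives no proof beyond the sentence ``The next proposition is a particular case of the Lemma~2.27 from \cite{zygmund2002trigonometric} Vol~II page~70'', so on the substantive Tauberian part (the regime $0<\delta<1$) you do exactly what the paper does: defer to Zygmund. Your additional self-contained treatment of the consistency direction $\delta\ge 1$ via the Toeplitz matrix $M_{n,k}=A_{n-k}^{\delta_2-\delta_1-1}A_k^{\delta_1}/A_n^{\delta_2}$ and Lemma~\ref{lem:regular} is correct and nicely ties the argument to the machinery already set up in the paper, but it is strictly speaking redundant, since Zygmund's lemma already handles every $\delta>0$ at once. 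In short: same approach at heart, with a bit of extra (harmless) scaffolding on your side.
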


From all this we can deduce the following results involving the numbers $\Psi_{n,j}$:

\begin{proposition}\label{prop:log}
Let $(w_j)_{j\geq 1}$ be a sequence of non-negative real numbers. Suppose that 
\[\lim_{n \to \infty} \frac{1}{n} \sum\limits_{j=1}^n \Psi_n (j) w_j = L >0.\]
Then 
\[\lim_{n \to \infty} \frac{1}{\log n} \sum\limits_{j=1}^n \frac{\Psi_n (j)}{j} w_j = L \theta .\]
\end{proposition}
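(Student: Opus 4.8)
The plan is to strip the weight $\Psi_n(j)$ where it is harmless (the bulk $j\le n/2$, where $\Psi_n(j)\to 1$) and to absorb it into the hypothesis where it is not (the edge $n/2<j\le n$), thereby reducing the statement to the classical fact that a $(C,1)$-convergent sequence has a convergent logarithmic mean. Two elementary estimates on $\Psi_n$ do the work. First, since $\frac{\Psi_n(i)}{\Psi_n(i-1)}=\frac{n-i+1}{\theta+n-i}=\bigl(1+\tfrac{\theta-1}{n-i+1}\bigr)^{-1}$, for $j\le n/2$ one has
\[\bigl|\log\Psi_n(j)\bigr|=\Bigl|\sum_{i=1}^{j}\log\Bigl(1+\tfrac{\theta-1}{n-i+1}\Bigr)\Bigr|\le C_\theta\,\frac{j}{n},\]
hence $|\Psi_n(j)-1|\le C_\theta'\,j/n$ and $\Psi_n(j)\ge c_\theta>0$ on that range. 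Second, $\Psi_n(j)>0$ for all $j\le n$, so $0\le\sum_{n/2<j\le n}\Psi_n(j)w_j\le\sum_{j=1}^{n}\Psi_n(j)w_j$. Combining the lower bound $\Psi_{2m}(j)\ge c_\theta$ for $j\le m$ with the hypothesis yields the a priori bound $W_m:=\sum_{j\le m}w_j\le c_\theta^{-1}\sum_{j\le 2m}\Psi_{2m}(j)w_j=\mathcal{O}(m)$.

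With these in hand, split
\[\sum_{j=1}^{n}\frac{\Psi_n(j)}{j}\,w_j=\sum_{j\le n/2}\frac{w_j}{j}+\sum_{j\le n/2}\frac{\Psi_n(j)-1}{j}\,w_j+\sum_{n/2<j\le n}\frac{\Psi_n(j)}{j}\,w_j.\]
By the first estimate and $W_m=\mathcal{O}(m)$ the middle sum is $\mathcal{O}\bigl(n^{-1}W_{\lfloor n/2\rfloor}\bigr)=\mathcal{O}(1)$; by the second estimate and the hypothesis the last sum is at most $(\lfloor n/2\rfloor)^{-1}\sum_{j\le n}\Psi_n(j)w_j=\mathcal{O}(1)$; and $\sum_{j\le n}\tfrac{w_j}{j}-\sum_{j\le n/2}\tfrac{w_j}{j}=\mathcal{O}(1)$ likewise. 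Dividing by $\log n$, the Proposition is therefore equivalent to
\[\frac{1}{\log n}\sum_{j\le n}\frac{w_j}{j}\xrightarrow[n\to\infty]{}\theta L .\]

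To prove this, rewrite $\tfrac1n\sum_{j\le n}\Psi_n(j)w_j=\tfrac{\theta+n}{\theta n}\,\sigma_n^{\theta}(w)$ via the Remark (setting $w_0:=0$): the hypothesis says precisely that $(w_j)$ is $(C,\theta)$-convergent to $\theta L$. It suffices to show $W_m/m\to\theta L$, i.e. that $(w_j)$ is $(C,1)$-convergent to $\theta L$; indeed one summation by parts,
\[\sum_{j\le n}\frac{w_j}{j}=\frac{W_n}{n}+\sum_{j<n}\frac{W_j/j}{j+1}=\theta L\,\log n+o(\log n),\]
then concludes. For $\theta\le 1$ the passage from $(C,\theta)$ to $(C,1)$ is the standard Cesàro inclusion — concretely, an application of Lemma~\ref{lem:regular} to the regular matrix sending the $(C,\theta)$-means to the $(C,1)$-means. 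For $\theta>1$ one invokes a Tauberian argument: $w_j\ge0$ makes $(W_m)$ nondecreasing and $W_m/m$ bounded, hence slowly decreasing, and slow decrease is a Tauberian condition permitting the descent from Cesàro order $\theta$ down to order $1$; for the bounded sequences $(w_j)$ actually occurring in this paper the descent is automatic, since all Cesàro methods of positive order coincide on bounded sequences. I expect this Tauberian descent (for general non-negative, possibly unbounded $(w_j)$ with $\theta>1$) to be the only delicate point; the two estimates on $\Psi_n$ and the summation by parts are routine.
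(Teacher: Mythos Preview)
Your reduction is correct: the two estimates on $\Psi_n(j)$ (namely $|\Psi_n(j)-1|\ll_\theta j/n$ for $j\le n/2$, and positivity throughout) together with the a~priori bound $W_m=\mathcal O(m)$ do reduce the Proposition to the classical statement that $(C,\theta)$-convergence of $(w_j)$ to $\theta L$ implies \emph{logarithmic} convergence $\tfrac1{\log n}\sum_{j\le n}w_j/j\to\theta L$. The gap is in how you prove that last implication. You route it through the strictly stronger claim $W_n/n\to\theta L$ (i.e.\ $(C,1)$-convergence), and for $\theta>1$ this descent is not justified: the Schmidt-type Tauberian theorem you gesture at takes a slowly decreasing sequence from $(C,\alpha)$-summability of \emph{that same sequence} to convergence, but you establish slow decrease of $(W_m/m)$ while the hypothesis gives $(C,\theta)$-summability of $(w_j)$---a mismatch. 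Nor does Proposition~\ref{prop:cesaro} help, since it only records $(C,1)\Rightarrow(C,\delta)$, not the reverse. Your approach is complete for $\theta\le1$ (where the Ces\`aro inclusion runs the right way) and can be completed for bounded $(w_j)$ via the full Zygmund equivalence of positive-order Ces\`aro methods on bounded sequences, which covers the paper's applications; but as a proof of the Proposition as stated it has a hole.

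The paper sidesteps this entirely. Rather than stripping $\Psi_n$, it exhibits an \emph{exact} identity $t_n=(Ms)_n$ between $s_n=\tfrac1n\sum_j\Psi_n(j)w_j$ and $t_n=\tfrac1{J_n}\sum_j\tfrac{\Psi_n(j)}{j}w_j$, where $M_{n,j}=\tfrac1{J_n}\bigl(\tfrac{\theta}{\theta+j}\mathds 1_{j<n}+\mathds 1_{j=n}\bigr)$ is a lower-triangular stochastic matrix with columns tending to zero; the identity is proved by a short induction using the recursion $\Psi_n(j)\bigl(\tfrac1j-\tfrac1n\bigr)=\Psi_{n-1}(j)\bigl(\tfrac1j-\tfrac1{\theta+n-1}\bigr)$, and Lemma~\ref{lem:regular} then gives $t_n\to L$ directly. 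No case split on $\theta$, no Tauberian input, no boundedness hypothesis. Your strategy would be salvaged by proving ``$(C,\theta)\Rightarrow$ logarithmic'' without the $(C,1)$ intermediary---this is a known inclusion in summability theory---but the cleanest self-contained argument for general $\theta>0$ is essentially the paper's regular-matrix identity, so the detour buys nothing.
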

\begin{proof}
For all $n\geq 1$, let us define $s_n:= \frac{1}{n} \sum\limits_{j=1}^n \Psi_n (j) w_j$, and $t_n:= \frac{1}{J_n} \sum\limits_{j=1}^n \frac{\Psi_n (j)}{j} w_j$, where $J_n:= \sum\limits_{j=1}^n \frac{\theta}{\theta + j-1}$. We introduce the infinite matrix $M=(M_{n,j})$ defined for all $n,j \geq 1$ by
\begin{equation}
M_{n,j} = \frac{1}{J_n} \left( \frac{\theta}{\theta +j} \mathds{1}_{j<n} + \mathds{1}_{j=n} \right) . 
\end{equation}
The conditions of Lemma~\ref{lem:regular} are easy to check for the matrix $M$. \\
We show $t_n = \sum_{j=1}^n M_{n,j} s_j$ by induction:
\begin{itemize}
	\item \underline{$n=1$}: $t_1 = \Psi_1 (1) w_1 = \frac{w_1}{\theta}$ and $\sum_{j=1}^1 M_{1,j} s_j = s_1 = \frac{w_1}{\theta}$.
	\item \underline{$n-1$ to $n$}: Suppose that the statement holds for $n-1$. We prove it at the step $n$, in other words we want to show
	\begin{equation}\label{eq:recurrencePsi}
	\sum_{j=1}^n \frac{\Psi_n (j)}{j} w_j = \sum_{j=1}^{n-1} \frac{\theta}{\theta + j} s_j +s_n 
	\end{equation}
	From \eqref{eq:defPsi} we have 
	\[ \Psi_n (j) \left[ \frac{1}{j} - \frac{1}{n} \right] = \Psi_{n-1} (j) \left[ \frac{1}{j} - \frac{1}{\theta + n -1} \right] \]
	for $1\leq j\leq n-1$ and then, successively,  
	\begin{align*}
	\frac{\Psi_n (j)}{j} &= \frac{\Psi_{n-1} (j)}{j} -\frac{\Psi_{n-1} (j)}{\theta + n -1}  + \frac{\Psi_n (j)}{n} \\
	\sum_{j=1}^{n-1} \frac{\Psi_n (j)}{j} w_j &= \sum_{j=1}^{n-1} \frac{\Psi_{n-1} (j)}{j} w_j - \frac{1}{\theta + n -1} \sum_{j=1}^{n-1} \Psi_{n-1} (j) w_j + \frac{1}{n} \sum_{j=1}^{n-1} \Psi_n (j) w_j \\
		&= \sum_{j=1}^{n-1} \frac{\Psi_{n-1} (j)}{j} w_j - \frac{n-1}{\theta + n -1} s_{n-1} + s_n - \frac{\Psi_n (n)}{n} w_n
	\end{align*}
	and, applying the induction hypothesis \eqref{eq:recurrencePsi} at step $n-1$, we get 
	\[\sum_{j=1}^{n-1} \frac{\Psi_n (j)}{j} w_j + \frac{\Psi_n (n)}{n} w_n = \sum_{j=1}^{n-2} \frac{\theta}{\theta + j} s_j +s_{n-1} - \frac{n-1}{\theta + n -1} s_{n-1} + s_n = \sum_{j=1}^{n-1} \frac{\theta}{\theta + j} s_j + s_n  \]
	which is \eqref{eq:recurrencePsi} at step $n$.
\end{itemize}
Thus Lemma~\ref{lem:regular} applies and gives $t_n \underset{n\to \infty}{\longrightarrow} L$. \\
Finally, it just remains to see that $J_n / \log n \rightarrow \theta $, which is clear by comparison with the harmonic series.
\end{proof}

\begin{lemma}\label{lem:EqPsi}
For all $n\geq 1$,
\begin{equation}\label{eq:moyPsi}
\frac{1}{n} \sum_{j=1}^n \Psi_n (j) = \frac{1}{\theta}
\end{equation} 
and
\begin{equation}\label{eq:sumPsij}
\sum_{j=1}^n \frac{\Psi_n (j)}{j} = \sum_{j=1}^n \frac{1}{\theta + j -1}. 
\end{equation}
\end{lemma}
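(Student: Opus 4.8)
The plan is to prove the two identities separately, both reducing to the one-step recursion
\[\Psi_n (j) = \frac{n-j+1}{\theta+n-j}\,\Psi_n (j-1), \qquad 1\le j\le n,\]
which is immediate from \eqref{eq:defPsi}, together with the natural convention $\Psi_n(0):=1$ (empty product), consistent since $\Psi_n(1)=\tfrac{n}{\theta+n-1}$.

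For \eqref{eq:moyPsi} I would look for a telescoping summand. Setting $T_j := \tfrac{n-j}{\theta}\Psi_n(j)$ for $0\le j\le n$, one checks in one line from the recursion that $T_{j-1}-T_j=\Psi_n(j)$ for every $1\le j\le n$: indeed $T_{j-1}-T_j = \tfrac{1}{\theta}\big[(n-j+1)\Psi_n(j-1)-(n-j)\Psi_n(j)\big] = \tfrac{\Psi_n(j)}{\theta}\big[(\theta+n-j)-(n-j)\big] = \Psi_n(j)$. Summing over $j$ telescopes to $\sum_{j=1}^n \Psi_n(j) = T_0 - T_n = \tfrac{n}{\theta}\Psi_n(0) - 0 = \tfrac{n}{\theta}$, which is \eqref{eq:moyPsi}. (Equivalently one could invoke the probabilistic meaning — $\tfrac{\theta}{j}\Psi_n(j)$ is the expected number of $j$-cycles of a $\mathbb{P}_\theta^{(N)}$-random permutation, and $\sum_j j\cdot\tfrac{\theta}{j}\Psi_n(j)=n$ is deterministic — but the telescoping is self-contained.)

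For \eqref{eq:sumPsij} I would reuse the algebraic identity already derived \emph{inside} the proof of Proposition~\ref{prop:log}, namely the unravelled matrix relation $\sum_{j=1}^n \tfrac{\Psi_n(j)}{j}w_j = \sum_{j=1}^{n-1}\tfrac{\theta}{\theta+j}s_j + s_n$ with $s_k := \tfrac{1}{k}\sum_{i=1}^k \Psi_k(i)w_i$, which holds for an arbitrary sequence $(w_j)$ and whose derivation does not invoke the present Lemma (so there is no circularity). Specializing to $w_j\equiv 1$ and using \eqref{eq:moyPsi} gives $s_k = \tfrac1\theta$ for all $k$, whence $\sum_{j=1}^n \tfrac{\Psi_n(j)}{j} = \sum_{j=1}^{n-1}\tfrac{1}{\theta+j} + \tfrac1\theta = \sum_{j=0}^{n-1}\tfrac{1}{\theta+j} = \sum_{j=1}^n \tfrac{1}{\theta+j-1}$. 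Alternatively one may give a standalone induction on $n$: the base case $n=1$ is $\Psi_1(1)=\tfrac1\theta$, and the same chain of equalities as in the proof of Proposition~\ref{prop:log}, combined with \eqref{eq:moyPsi}, yields the step $\sum_{j=1}^n \tfrac{\Psi_n(j)}{j} = \sum_{j=1}^{n-1}\tfrac{\Psi_{n-1}(j)}{j} + \tfrac{1}{\theta+n-1}$.

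Neither step is a real obstacle: the only slightly nonroutine points are guessing the telescoping coefficient $\tfrac{n-j}{\theta}$ for \eqref{eq:moyPsi} (or spotting the probabilistic shortcut), and checking that the identity borrowed from the proof of Proposition~\ref{prop:log} is logically anterior to this Lemma. Once these are in place, both identities follow in a couple of lines.
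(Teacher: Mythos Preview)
Your proof is correct. For \eqref{eq:sumPsij} you follow exactly the paper's route: specialize the identity $t_n=(Ms)_n$ established in the proof of Proposition~\ref{prop:log} to the constant sequence $w_j\equiv 1$, then use \eqref{eq:moyPsi} to get $s_k=\tfrac1\theta$ and read off the result.

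For \eqref{eq:moyPsi} you take a genuinely different path. The paper proves it via Ces\`aro numbers: from the generating-function identity $(1-x)^{-\theta-1}=\tfrac{1}{1-x}(1-x)^{-\theta}$ one gets $A_n^\theta=\sum_{j=0}^n A_j^{\theta-1}$, and since $\Psi_n(j)=\tfrac{\theta+n}{\theta}\,\tfrac{A_{n-j}^{\theta-1}}{A_n^\theta}$ with the $j=0$ term equal to $1$, summing gives $1+\sum_{j=1}^n\Psi_n(j)=\tfrac{\theta+n}{\theta}$. Your telescoping via $T_j=\tfrac{n-j}{\theta}\Psi_n(j)$ is more elementary and entirely self-contained---no generating functions or Ces\`aro numbers needed---while the paper's argument has the virtue of tying the identity back to the Ces\`aro-mean machinery already set up. Both are short; yours is arguably the cleaner standalone proof, the paper's the more thematic one.
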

\begin{proof}
First, as $ \sum_{k=0}^{+\infty} A_k^\theta x^k = (1-x)^{-\theta -1}  = \frac{1}{1-x} (1-x)^{-(\theta -1)-1}$, then $A_n^\theta = \sum_{j=0}^n A_j^{\theta -1}$. Moreover, it is easy to notice that $\Psi_n (j) = \frac{\theta + n}{\theta} \frac{A_{n-j}^{\theta -1}}{A_n^\theta}$, $1\leq j \leq n$, and $ \frac{\theta + n}{\theta} \frac{A_{n}^{\theta -1}}{A_n^\theta}=1$. Therefore, $1+\sum_{j=1}^n \Psi_n (j) = \frac{\theta + n}{\theta}$, which gives \eqref{eq:moyPsi}.

For \eqref{eq:sumPsij}, we use the same notation as in the proof of Proposition~\ref{prop:log}. We had proven that for all $n$, $t_n = (Ms)_n$. In particular, if $(w_j)$ is the constant sequence equal to $1$, then, following \eqref{eq:moyPsi}, $s_j = \frac{1}{\theta}$ for all $j$, and
\begin{align*}
t_n = \frac{1}{J_n} \sum_{j=1}^n \frac{\Psi_n (j)}{j} = (Ms)_n &=  \frac{1}{J_n} \left(\sum_{j=1}^{n-1} \frac{\theta}{\theta + j}   s_j + s_n \right) \\
	&= \frac{1}{J_n} \left(\sum_{j=1}^{n-1} \frac{1}{\theta + j}  + \frac{1}{\theta} \right) = \frac{1}{\theta}
\end{align*}
so that we have \eqref{eq:sumPsij}.
\end{proof}

\subsection{Feller Coupling}

Let $(\sigma_n)_{n\geq 1}$ be a sequence of random permutation generated under Ewens measure of parameter $\theta$. For all $n\geq 1$ we denote by $a_{n,j}$ the number of $j$-cycles in the decomposition into disjoint cycles of $\sigma_n$. The $a_{n,j}$ are also called the \textbf{cycle counts} of $\sigma_n$.

The next result will be useful to prove the main results of the paper. It consists in an approximation of the cycle counts by independent Poisson random variables, using the so-called \textbf{Feller Coupling} (see \cite{arratia2003logarithmic}).

\begin{lemma}[Lemma 5.3 of \cite{arratia2003logarithmic}]\label{lem:majPoisson}
One can couple $(\sigma_n)_{n\geq 1}$ with a sequence $(W_j)_{j\geq 1}$ of independent Poisson random variables of parameter $\theta/j$ in such a way that 
\[\mathbb{E} \left( \sum_{j=1}^n  \left| a_{n,j} - W_j \right|  \right) = \mathcal{O}_\theta (1).  \]
\end{lemma}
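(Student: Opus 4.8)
This is Lemma~5.3 of \cite{arratia2003logarithmic}, so one can simply invoke it; here is how I would argue it. The plan is built on the \emph{Feller coupling}. Take $(\xi_i)_{i\ge1}$ independent Bernoulli variables with $\mathbb{P}(\xi_i=1)=\theta/(\theta+i-1)$, so that $\xi_1=1$ almost surely; since $\sum_{i\ge1}\theta/(\theta+i-1)=+\infty$ this $0/1$ sequence has infinitely many $1$'s, at positions $1=p_1<p_2<\cdots$, and I set $W_j:=\#\{k\ge1:\ p_{k+1}-p_k=j\}$, the number of ``$j$-spacings'' of the infinite sequence. For each fixed $n$, appending a phantom symbol $\xi_{n+1}:=1$ and letting $p_m=p_m(n)$ be the last $1$ at a position $\le n$, I set $a_{n,j}:=\#\{1\le k\le m-1:\ p_{k+1}-p_k=j\}+\mathds{1}[\,n+1-p_m=j\,]$; the point of the coupling is that from the \emph{single} sequence $(\xi_i)$ one reads off, simultaneously for every $n$, a permutation $\sigma_n$ whose number of $j$-cycles is exactly $a_{n,j}$, together with the variables $W_j$. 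The two distributional facts I would import from \cite{arratia2003logarithmic} are that $(a_{n,j})_{j\ge1}$ has the law of the cycle counts of a permutation drawn from $\mathbb{P}_\theta^{(n)}$, and that $(W_j)_{j\ge1}$ are independent with $W_j\sim\mathrm{Poisson}(\theta/j)$; reproving the former is the standard enumeration of permutations with a prescribed cycle type against the weight $\theta^{K}$, and the latter is a short direct computation.

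Granting these, the estimate is a deterministic identity followed by a crude first-moment bound. Because $a_{n,j}$ and $W_j$ count the \emph{same} spacings $p_{k+1}-p_k$ with right endpoint $p_{k+1}\le n$, one gets $a_{n,j}-W_j=\mathds{1}[\,n+1-p_m=j\,]-\#\{k\ge m:\ p_{k+1}-p_k=j\}$, whence by the triangle inequality
\[\sum_{j=1}^{n}\bigl|a_{n,j}-W_j\bigr|\ \le\ 1+R_n,\qquad R_n:=\#\{k\ge m(n):\ p_{k+1}-p_k\le n\}.\]
So the whole lemma reduces to $\mathbb{E}[R_n]=\mathcal{O}_\theta(1)$ uniformly in $n$. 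Apart from the single ``straddling'' term produced by $p_m$ itself, the $1$'s contributing to $R_n$ all sit at positions $i>n$: for such $i$ one has $\mathbb{P}(\xi_i=1)=\mathcal{O}_\theta(1/i)$, while, independently of $\xi_i$, the chance that the sequence has another $1$ in $(i,i+n]$ is at most $\theta n/i$, so that a $1$ at $i$ has a successor within distance $n$ with probability $\mathcal{O}_\theta(n/i^2)$. Summing, $\mathbb{E}[R_n]\le 1+\sum_{i>n}\mathcal{O}_\theta(n/i^2)=1+\mathcal{O}_\theta(1)$ since $n\sum_{i>n}i^{-2}$ is bounded uniformly in $n$, and combining the two displays gives $\mathbb{E}\sum_{j=1}^{n}|a_{n,j}-W_j|=\mathcal{O}_\theta(1)$.

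The only genuinely delicate ingredients are the two distributional identities --- that the Feller construction really outputs an Ewens-distributed permutation, and that the spacing counts of the infinite Bernoulli sequence are independent Poissons --- which I would take verbatim from \cite{arratia2003logarithmic} rather than redo here. Once they are in place the quantitative part is easy and robust; the one thing to watch is that the first-moment control of $R_n$ be \emph{uniform in $n$}, and this is exactly what makes the $1$'s beyond position $n$ sparse enough for the expected number of short gaps among them to be summable against the convergent tail $n\sum_{i>n}i^{-2}$.
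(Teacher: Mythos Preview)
Your argument is correct. The Feller-coupling decomposition you give is essentially the same as the paper's: writing $J_n$ for the distance from $n$ back to the last $1$ and $K_n$ for the distance from $n$ forward to the next $1$, the paper's bound
\[
|a_{n,j}-W_j|\le \mathds{1}_{J_n=j}+W_{j,n}+\mathds{1}_{J_n+K_n=j+1}
\]
is exactly your ``phantom spacing $+$ straddling spacing $+$ spacings starting beyond $n$'' breakdown, and summing over $j$ recovers your inequality $\sum_j|a_{n,j}-W_j|\le 1+R_n$ (up to the extra $+1$ for the straddling term).

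Where the two diverge is in how the tail $\mathbb{E}[R_n]$ is controlled. You use a crude union bound: a $1$ at position $i>n$ occurs with probability $\mathcal{O}_\theta(1/i)$ and is followed by another $1$ within distance $n$ with probability at most $\theta n/i$, giving $\sum_{i>n}\mathcal{O}_\theta(n/i^2)=\mathcal{O}_\theta(1)$. The paper instead computes \emph{exactly}
\[
\mathbb{E}(W_{j,n})+\mathbb{P}(J_n+K_n=j+1)=\theta\Bigl[\tfrac{1}{j}-\Psi_n(j)\bigl(\tfrac{1}{j}-\tfrac{1}{n}\bigr)\Bigr]
\]
via a Ces\`aro-type identity (their Lemma~\ref{lem:Psi}), and then sums using Lemma~\ref{lem:EqPsi} to obtain the explicit constant $2+\theta(\gamma+\psi(\theta))$. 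Your route is shorter and perfectly adequate for $\mathcal{O}_\theta(1)$; the paper's route is deliberately chosen (see the Remark following the statement) to illustrate that the Ces\`aro numbers $\Psi_n(j)$ used throughout the article emerge naturally from the Feller coupling, and as a bonus yields a sharper explicit bound.
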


\begin{remark}
A proof of this result is given in \cite{arratia1992poisson} pages 525-526, without any consideration of Cesàro numbers. In the Appendix of the present paper we provide a simple proof involving Cesàro means in order to point out that they naturally emerge from Feller coupling.
\end{remark}

We end preliminaries with two lemmas which will be useful to prove Theorem~\ref{th:2}.
\begin{lemma}
For all $n\geq 1$,
\begin{equation}\label{eq:quadPsi}
\sum_{1\leq j,k \leq n} \frac{1}{jk} \left( \Psi_n (j) \Psi_n (k) - \Psi_n (j+k) \mathds{1}_{j+k\leq n}\right)  = \sum_{k=0}^{n-1} \frac{1}{(\theta+k )^2}. 
\end{equation}
\end{lemma}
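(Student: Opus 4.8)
The plan is to establish the identity \eqref{eq:quadPsi} by the same recursive philosophy used in Proposition~\ref{prop:log} and Lemma~\ref{lem:EqPsi}, namely to use the three-term relation linking $\Psi_n$ and $\Psi_{n-1}$ to set up an induction on $n$. Write $S_n$ for the left-hand side of \eqref{eq:quadPsi} and $R_n := \sum_{k=0}^{n-1} (\theta + k)^{-2}$ for the right-hand side; the base case $n=1$ reads $\frac{1}{1\cdot 1}(\Psi_1(1)^2 - 0) = 1/\theta^2 = R_1$, which is immediate from $\Psi_1(1) = 1/\theta$. For the inductive step I would split off from $S_n$ the terms with $j+k \leq n$ and those with $j+k > n$, and in the first group apply the identity $\Psi_n(j) \left[ \frac{1}{j} - \frac{1}{n}\right] = \Psi_{n-1}(j)\left[\frac{1}{j} - \frac{1}{\theta + n - 1}\right]$ (valid for $1 \leq j \leq n-1$), together with the analogous relation for $\Psi_n(j+k)$ in terms of $\Psi_{n-1}(j+k)$ when $j + k \leq n-1$. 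This should rewrite $S_n$ as $S_{n-1}$ plus correction terms involving single sums of the form $\sum_j \Psi_n(j)/j$, $\sum_j \Psi_{n-1}(j)/j$ and $\sum_j \Psi_n(j)$, all of which are known in closed form via \eqref{eq:moyPsi}, \eqref{eq:sumPsij} and \eqref{eq:moyPsi} applied at level $n-1$.

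Alternatively — and this may be cleaner — I would use a generating-function / convolution argument. Recall from the proof of Lemma~\ref{lem:EqPsi} that $\Psi_n(j) = \frac{\theta + n}{\theta}\frac{A_{n-j}^{\theta-1}}{A_n^{\theta}}$, so that $\frac{\theta + n}{\theta}\frac{1}{A_n^\theta}$ times the sequence $(A_{n-j}^{\theta-1})_{0 \leq j \leq n}$ packages the $\Psi_n(j)$. Then $\Psi_n(j)\Psi_n(k)$ and $\Psi_n(j+k)\mathds{1}_{j+k \leq n}$ both relate to Cauchy products of the sequence $(A_m^{\theta-1})$, via $\sum_{m} A_m^{\theta-1} x^m = (1-x)^{-\theta}$, and the combination $\Psi_n(j)\Psi_n(k) - \Psi_n(j+k)\mathds{1}_{j+k\leq n}$ is designed precisely so that the ``diagonal'' contributions telescope. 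Introducing the weights $1/(jk)$ corresponds to integrating the generating function, and one expects the logarithmic-derivative identity $\frac{d}{dx}\log(1-x)^{-\theta} = \theta/(1-x)$ to produce, after extracting the relevant coefficient, the sum $\sum_{k=0}^{n-1}(\theta+k)^{-2}$ on the nose (note $(\theta + k)^{-2}$ is itself the coefficient structure one gets from $\left(\sum_k x^k/(\theta+k)\right)'$-type manipulations, or from squaring $\sum 1/(\theta+k)$ and differencing).

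I expect the main obstacle to be bookkeeping the boundary terms: in either approach the cross-over between $j+k \leq n$ and $j + k > n$, and the endpoint contributions $j = n$ or $k = n$ (where $1/j - 1/n$ or $1/k - 1/n$ vanishes and the $\Psi_{n-1}$ substitution is unavailable), need to be tracked with care, and it is easy to be off by a term of size $1/(\theta + n-1)^2$ — which is exactly the increment $R_n - R_{n-1}$, so precision here is essential rather than cosmetic. Once the single-sum corrections are collected, matching them against $R_n - R_{n-1} = (\theta + n - 1)^{-2}$ should be a short computation using $\Psi_n(n) = n!/((\theta+n-1)\cdots\theta)$ and the evaluations $\sum_{j=1}^n \Psi_n(j)/j = \sum_{j=1}^n (\theta + j - 1)^{-1}$ and $\sum_{j=1}^n \Psi_n(j) = n/\theta$. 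I would present the induction as the primary argument, since it parallels the structure of the preceding proofs and keeps everything elementary.
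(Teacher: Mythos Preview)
Your proposal sketches two strategies --- induction via the $\Psi_n/\Psi_{n-1}$ relation, and a generating-function argument --- but carries neither to completion. The induction is plausible in principle, yet would require handling a \emph{product} of two recursion substitutions (one for $j$, one for $k$) simultaneously with the shift of the indicator $\mathds{1}_{j+k\leq n}$; you rightly flag the boundary bookkeeping as the hazard, and without actually performing it there is no guarantee the cross-terms collapse to exactly $(\theta+n-1)^{-2}$. The generating-function sketch is too vague to evaluate.

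The paper's proof is entirely different and much shorter: it is probabilistic. Let $K_n=\sum_{j=1}^n a_{n,j}$ be the total number of cycles of a $\theta$-Ewens random permutation. Using the known mixed factorial moments of the cycle counts (namely $\mathbb{E}(a_{n,j})=\frac{\theta}{j}\Psi_n(j)$ and, for all $j,k$, $\mathbb{E}(a_{n,j}a_{n,k})-\delta_{jk}\mathbb{E}(a_{n,j})=\frac{\theta^2}{jk}\Psi_n(j+k)\mathds{1}_{j+k\leq n}$), one computes
\[
\mathrm{Var}(K_n)=\mathbb{E}(K_n)-\theta^2\sum_{1\leq j,k\leq n}\frac{1}{jk}\Bigl(\Psi_n(j)\Psi_n(k)-\Psi_n(j+k)\mathds{1}_{j+k\leq n}\Bigr).
\]
On the other hand, the Feller coupling writes $K_n=\xi_1+\cdots+\xi_n$ as a sum of independent Bernoulli variables with $\mathbb{P}(\xi_k=1)=\frac{\theta}{\theta+k-1}$, so $\mathbb{E}(K_n)=\sum_{k=0}^{n-1}\frac{\theta}{\theta+k}$ and $\mathrm{Var}(K_n)=\sum_{k=0}^{n-1}\frac{\theta k}{(\theta+k)^2}$ directly. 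Subtracting gives $\mathbb{E}(K_n)-\mathrm{Var}(K_n)=\theta^2\sum_{k=0}^{n-1}(\theta+k)^{-2}$, and the identity follows at once. The gain over your plan is that no recursion or boundary analysis is needed: both sides are evaluated in closed form from standard facts about the Ewens distribution.
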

\begin{proof}
Denoting $K_n:=\sum\limits_{j=1}^n a_{n,j}$ the total number of cycles, we first notice that
\begin{align*}
\mathrm{Var} (K_n) &= \sum_{j=1}^n \mathrm{Var} (a_{n,j}) + \sum_{\substack{1\leq j,k \leq n \\ j\neq k}} \mathrm{Cov} (a_{n,j} , a_{n,k}) \\
	&= \sum_{1\leq j,k \leq n} \Psi_n (j+k)\mathds{1}_{j+k\leq n} \frac{\theta^2}{jk} + \mathbb{E} (K_n) - \sum_{j=1}^n \left( \frac{\theta}{j} \Psi_n (j) \right)^2 - \sum_{\substack{1\leq j,k \leq n \\ j\neq k}} \frac{\theta}{j} \Psi_n (j) \frac{\theta}{k} \Psi_n (k) \\
	&= \mathbb{E} (K_n) + \theta^2 \sum_{1\leq j,k \leq n} \frac{1}{jk} \left( \Psi_n (j+k) \mathds{1}_{j+k\leq n} - \Psi_n (j) \Psi_n (k)\right)  .   
\end{align*}
Moreover, the Feller Coupling provides the nice expression $K_n= \xi_1 + \cdots + \xi_n$, where the $\xi_k$ are independent Bernoulli variables with parameter $\frac{\theta}{\theta+k-1}$. From this expression of $K_n$ it follows $\mathbb{E} (K_n ) =\sum\limits_{k=0}^{n-1} \frac{\theta}{\theta + k}$ and $\mathrm{Var} (K_n) = \sum\limits_{k=0}^{n-1} \frac{\theta k}{(\theta +k)^2}$. Hence, 
\[\mathbb{E}(K_n) - \mathrm{Var} (K_n) = \sum_{k=0}^{n-1} \left( \frac{\theta}{\theta + k} - \frac{\theta k}{(\theta +k)^2} \right) =   \sum_{k=0}^{n-1} \frac{\theta^2}{(\theta + k)^2}, \]
which gives \eqref{eq:quadPsi}.
\end{proof}

\begin{lemma}\label{lem:borne}
For all $n\geq 1$,
\[ \sum_{1\leq j,k \leq n} \frac{1}{jk} \left\vert \Psi_n (j) \Psi_n (k) - \Psi_n (j+k) \mathds{1}_{j+k\leq n} \right\vert = \mathcal{O}_\theta (1).   \]
\end{lemma}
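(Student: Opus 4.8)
The plan is to control the sum by splitting the index range according to whether $j+k \le n$ or not, and on the "diagonal" part to exploit the exact identity \eqref{eq:quadPsi} together with monotonicity/sign properties of $\Psi_n$. First I would record the elementary facts about $\Psi_n$: from \eqref{eq:defPsi} each factor $\frac{n-i}{\theta+n-1-i}$ lies in a fixed bounded interval (depending only on $\theta$), so $\Psi_n(j) \asymp_\theta 1$ uniformly, and moreover $j \mapsto \Psi_n(j)$ is monotone for $j$ in a range determined by the sign of $\theta - 1$ (it is nonincreasing if $\theta \ge 1$ and nondecreasing if $\theta \le 1$, since consecutive ratios compare $\Psi_n(j+1)/\Psi_n(j) = (n-j)/(\theta + n - 1 - j)$ to $1$). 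In particular the quantity $\Psi_n(j)\Psi_n(k) - \Psi_n(j+k)\mathds{1}_{j+k\le n}$ has a definite sign on the region $\{j+k \le n\}$: writing $\Psi_n(j+k) = \Psi_n(j)\cdot \frac{(n-j)\cdots(n-j-k+1)}{(\theta+n-1-j)\cdots(\theta+n-k-j)}$ and comparing with $\Psi_n(k) = \frac{(n-1)\cdots(n-k)}{(\theta+n-1)\cdots(\theta+n-k)}$ shows that for $\theta \ge 1$ we have $\Psi_n(j+k) \le \Psi_n(k)\Psi_n(j)$ term by term, hence the summand is $\ge 0$ there; symmetrically it is $\le 0$ for $\theta \le 1$.

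With a fixed sign on $\{j+k\le n\}$, the contribution of that region to the sum of absolute values is, up to an overall sign, exactly
\[
\sum_{\substack{1\le j,k\le n\\ j+k\le n}} \frac{1}{jk}\bigl(\Psi_n(j)\Psi_n(k) - \Psi_n(j+k)\bigr),
\]
and by \eqref{eq:quadPsi} this differs from $\sum_{k=0}^{n-1}\frac{1}{(\theta+k)^2}$ only by the terms with $j+k > n$ that were dropped from \eqref{eq:quadPsi}; those missing terms are $\sum_{j+k>n}\frac{1}{jk}\Psi_n(j)\Psi_n(k)$, which is $\mathcal{O}_\theta(1)$ because $\Psi_n(j)\Psi_n(k) = \mathcal{O}_\theta(1)$ and $\sum_{j+k>n, \, j,k\le n}\frac{1}{jk} = \mathcal{O}(1)$ (each such pair has $\max(j,k) > n/2$, so the sum is at most $2\sum_{k> n/2}^n \frac1k \sum_{j=1}^n \frac1j = \mathcal{O}(1)$). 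Since $\sum_{k=0}^{n-1}\frac{1}{(\theta+k)^2}$ converges, the region $\{j+k\le n\}$ contributes $\mathcal{O}_\theta(1)$.

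For the region $\{j+k > n\}$ the indicator vanishes, so the summand is simply $\frac{1}{jk}\Psi_n(j)\Psi_n(k) = \mathcal{O}_\theta\!\left(\frac{1}{jk}\right)$, and by the same estimate $\sum_{j+k>n,\,j,k\le n}\frac{1}{jk} = \mathcal{O}(1)$; hence this region also contributes $\mathcal{O}_\theta(1)$. Adding the two regions gives the claim. The main obstacle is the sign-definiteness step: one must verify carefully that $\Psi_n(j)\Psi_n(k) - \Psi_n(j+k)$ does not change sign on $\{j+k\le n\}$ (so that the absolute values can be removed and \eqref{eq:quadPsi} applied), which comes down to the termwise comparison of the two products above and works uniformly in $n$ once $\theta$ is fixed; the residual sums $\sum_{j+k>n}\frac1{jk}$ are then routine.
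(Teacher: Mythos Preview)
Your overall strategy---fixing the sign of $\Psi_n(j)\Psi_n(k)-\Psi_n(j+k)$ on $\{j+k\le n\}$ and then invoking \eqref{eq:quadPsi}---is exactly what the paper does, and your sign argument is correct. The gap is in how you dispose of the boundary sum $\sum_{j+k>n}\frac{1}{jk}\Psi_n(j)\Psi_n(k)$.

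The assertion ``$\Psi_n(j)\asymp_\theta 1$ uniformly'' is false for $\theta<1$. Each factor $\frac{n-i}{\theta+n-1-i}=1+\frac{1-\theta}{\theta+n-1-i}$ is indeed bounded, but there are up to $n$ of them, and for $\theta<1$ they are all $>1$; the product can blow up. Concretely,
\[
\Psi_n(n)=\frac{n!\,\Gamma(\theta)}{\Gamma(\theta+n)}\sim \Gamma(\theta)\,n^{1-\theta}\longrightarrow\infty\qquad(\theta<1),
\]
so the uniform bound you use to control the region $\{j+k>n\}$ is not available precisely in the case that matters (for $\theta\ge 1$ one has $\Psi_n\le 1$ and the argument is fine). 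The paper replaces the false uniform bound by the sharp estimate $\Psi_n(j)\ll_\theta\bigl(\tfrac{n}{n-j}\bigr)^{1-\theta}$ and then splits $\{j+k>n\}$ into the block $j,k\in[n/10,n]$ and the strip $j\le n/10$ (plus its symmetric), showing each piece is $\mathcal{O}_\theta(1)$ after the resulting power sums are evaluated.

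A secondary point: your justification of $\sum_{j+k>n,\,j,k\le n}\tfrac{1}{jk}=\mathcal{O}(1)$ is not correct as written. Bounding by $2\sum_{k>n/2}\tfrac{1}{k}\sum_{j\le n}\tfrac{1}{j}$ gives only $\mathcal{O}(\log n)$, since the inner harmonic sum is $\sim\log n$. The claim itself is true (split at $j\le n/2$ and use $\sum_{k=n-j+1}^{n}\tfrac{1}{k}\le \tfrac{2j}{n}$ there; on $j>n/2$ use $\tfrac{1}{j}\le\tfrac{2}{n}$ and sum $H_n-H_{n-j}$), but even with that fix the argument still needs the $\Psi_n$ bound above to go through when $\theta<1$.
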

\begin{proof}
Discussing the sign of terms inside absolute values according to $\theta$, and using the previous lemma, we get
\begin{align*}
&\sum_{1\leq j,k \leq n} \frac{1}{jk} \left\vert \Psi_n (j) \Psi_n (k) - \Psi_n (j+k) \mathds{1}_{j+k\leq n} \right\vert  \\
	&\qquad = \sum_{\substack{1\leq j,k \leq n \\ j+k\leq n}} \frac{1}{jk} \left( \Psi_n (j) \Psi_n (k) - \Psi_n (j+k) \mathds{1}_{j+k\leq n} \right) \left( \mathds{1}_{\theta \geq 1} - \mathds{1}_{\theta <1}   \right) \\
	&\qquad \qquad + \sum_{\substack{1\leq j,k \leq n \\ j+k > n}} \frac{1}{jk} \Psi_n (j) \Psi_n (k) \left( \mathds{1}_{\theta \geq 1} + \mathds{1}_{\theta <1}   \right) \\
	&\qquad = \left( \mathds{1}_{\theta \geq 1} - \mathds{1}_{\theta <1}  \right)  \sum_{k=0}^{n-1} \frac{1}{(\theta + k)^2} + 2\mathds{1}_{\theta<1} \sum_{\substack{1\leq j,k \leq n \\ j+k > n}} \frac{1}{jk} \Psi_n (j) \Psi_n (k)  \\
	&\qquad = \mathcal{O}_\theta (1) + 2\mathds{1}_{\theta<1} \sum_{\substack{1\leq j,k \leq n \\ j+k > n}} \frac{1}{jk} \Psi_n (j) \Psi_n (k) .
\end{align*}
Assume $\theta <1$. It remains to show $\sum\limits_{\substack{1\leq j,k \leq n \\ j+k > n}} \frac{1}{jk} \Psi_n (j) \Psi_n (k) = \mathcal{O}_\theta (1)$. \\
To do this, we split this sum as the one for $j$ and $k$ between $\frac{n}{10}$ and $n$, plus the one for $j$ or $k$ between $1$ and $\frac{n}{10}$. Based on the observation that there exists a constant $C_\theta$ such that for all $n$ and $j\leq n$, $\Psi_n (j) \leq C_\theta \left(\frac{n}{n-j}\right)^{1-\theta}$, with the convention $\frac{1}{0^{1-\theta}}=1$, it comes
\begin{align*}
\sum\limits_{\substack{\frac{n}{10}\leq j,k \leq n \\ j+k > n}} \frac{1}{jk} \Psi_n (j) \Psi_n (k) &\ll_\theta n^{-2\theta} \sum_{\frac{n}{10} \leq j,k \leq  n} \left(\frac{1}{n-j}\right)^{1-\theta} \left(\frac{1}{n-k}\right)^{1-\theta} \\
	&=n^{-2\theta} \left( \sum_{0\leq j \leq \frac{9n}{10}} j^{-(1-\theta)} \right)^2 \ll_\theta 1
\end{align*}
and 
\begin{align*}
\sum\limits_{\substack{1\leq j \leq \frac{n}{10} \\ j+k > n}} \frac{1}{jk} \Psi_n (j) \Psi_n (k) &\ll_\theta n^{-\theta} \sum_{1\leq j \leq \frac{n}{10}} \frac{1}{j}  \sum_{k=n-j+1}^{n} \left(\frac{1}{n-k}\right)^{1-\theta} \\
	&\ll_\theta n^{-\theta} \sum_{1\leq j \leq \frac{n}{10}} \frac{1}{j} j^\theta \ll_\theta 1,
\end{align*}
which gives the claim.
\end{proof}

\section{Proof of Theorem 1}
\label{sec:3}

\subsection{Mean and variance}

\subsubsection{\texorpdfstring{Symmetric group $\mathfrak{S}_N$ }{Symmetric group}}

In order to compare both ensembles with each other, we recall here some known results on the counting function of eigenvalues for $\mathfrak{S}_N$ that one can find for example in \cite{wieand2000eigenvalue} (for the case $\theta=1$) or in \cite{ben2015fluctuations}: 
\begin{proposition}
For all $N\geq 1$, denoting $\omega_j:=(\{j\beta \} - \{ j\alpha \})$, 
\begin{equation}
\mathbb{E} (X_N^I ) = N(\beta - \alpha) - \theta \sum_{j=1}^N \frac{\omega_j}{j} \Psi_N (j)
\end{equation}
\begin{equation}
\mathrm{Var} (X_N^I ) = \theta \sum_{j=1}^N \frac{\omega_j^2}{j} \Psi_N (j)  +\theta^2\sum_{1\leq j,k \leq N} \frac{\omega_j \omega_k}{jk} (\Psi_N (j+k) \mathds{1}_{j+k\leq N} - \Psi_N (j) \Psi_N (k)) . 
\end{equation}
\end{proposition}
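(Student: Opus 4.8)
The plan is to reduce the counting function to the \emph{cycle counts} $a_{N,j}$ of $\sigma_N$ and then to insert the classical formula for the joint factorial moments of cycle counts under the Ewens measure. \emph{First,} recall that the spectrum of a permutation matrix is, counted with multiplicity, the union over its disjoint cycles of the sets $\{\mathrm{e}^{2i\pi k/\ell}:0\le k\le \ell-1\}$ of $\ell$-th roots of unity, $\ell$ being the length of the cycle (indeed the $\ell\times\ell$ block attached to a cycle is a cyclic matrix with characteristic polynomial $X^\ell-1$). Hence
\[ X_N^I=\sum_{j=1}^N a_{N,j}\,\#\!\left\{k\in\{0,\dots,j-1\}:\mathrm{e}^{2i\pi k/j}\in I\right\}. \]
Since $\ell\mapsto(\ell\bmod j,\lfloor \ell/j\rfloor)$ is a bijection $\mathbb{Z}\to\{0,\dots,j-1\}\times\mathbb{Z}$ and the half-open interval $(j\alpha,j\beta]$ has length $j(\beta-\alpha)\le j$, the inner cardinality equals the number of integers in $(j\alpha,j\beta]$, that is $\lfloor j\beta\rfloor-\lfloor j\alpha\rfloor=j(\beta-\alpha)-\omega_j$; the conventions $0\le\alpha<1$, $\alpha<\beta\le\alpha+1$ and the half-openness of both the arc and $(j\alpha,j\beta]$ make this count exact, including the degenerate case $\beta-\alpha=1$. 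Using $\sum_{j\ge1}j\,a_{N,j}=N$, this gives the exact pointwise identity
\[ X_N^I=N(\beta-\alpha)-\sum_{j=1}^N\omega_j\,a_{N,j}. \]

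\emph{Next,} I would recall the standard formula (see \cite{arratia2003logarithmic}, Chapter~1, or Ewens' sampling formula; it follows from counting disjoint cycle configurations) that for pairwise distinct $j_1,\dots,j_r$ and integers $m_1,\dots,m_r\ge0$, writing $(x)_m:=x(x-1)\cdots(x-m+1)$,
\[ \mathbb{E}\!\left[\prod_{i=1}^r (a_{N,j_i})_{m_i}\right]=\left(\prod_{i=1}^r\Big(\tfrac{\theta}{j_i}\Big)^{m_i}\right)\Psi_N\!\Big(\sum_{i=1}^r m_i j_i\Big)\,\mathds{1}_{\sum_i m_i j_i\le N}. \]
Specialising gives $\mathbb{E}(a_{N,j})=\tfrac{\theta}{j}\Psi_N(j)$, $\mathbb{E}[a_{N,j}(a_{N,j}-1)]=\tfrac{\theta^2}{j^2}\Psi_N(2j)\mathds{1}_{2j\le N}$ and $\mathbb{E}(a_{N,j}a_{N,k})=\tfrac{\theta^2}{jk}\Psi_N(j+k)\mathds{1}_{j+k\le N}$ for $j\neq k$, hence the unified expression, valid for all $1\le j,k\le N$,
\[ \mathrm{Cov}(a_{N,j},a_{N,k})=\frac{\theta}{j}\Psi_N(j)\,\mathds{1}_{j=k}+\frac{\theta^2}{jk}\big(\Psi_N(j+k)\mathds{1}_{j+k\le N}-\Psi_N(j)\Psi_N(k)\big). \]

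\emph{Finally,} taking expectations in the identity of the first step yields $\mathbb{E}(X_N^I)=N(\beta-\alpha)-\theta\sum_{j=1}^N\tfrac{\omega_j}{j}\Psi_N(j)$, and, since $X_N^I$ differs from $-\sum_j\omega_j a_{N,j}$ by a constant, $\mathrm{Var}(X_N^I)=\sum_{1\le j,k\le N}\omega_j\omega_k\,\mathrm{Cov}(a_{N,j},a_{N,k})$; substituting the covariance formula and isolating the contribution of the $\mathds{1}_{j=k}$ term produces exactly the two terms in the statement. The main (and only) delicate point is the bookkeeping in the first step — verifying uniformly in $j$ that the number of $j$-th roots of unity in the half-open arc $I$ is exactly $\lfloor j\beta\rfloor-\lfloor j\alpha\rfloor$ — together with carrying the vanishing indicator $\mathds{1}_{\sum_i m_i j_i\le N}$ through the factorial-moment identity correctly; everything else is routine algebraic rearrangement.
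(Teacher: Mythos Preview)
Your proof is correct and is exactly the standard derivation: express $X_N^I$ as $N(\beta-\alpha)-\sum_j\omega_j a_{N,j}$ via the count $\lfloor j\beta\rfloor-\lfloor j\alpha\rfloor$ of $j$-th roots of unity in $I$, then feed in the Ewens factorial-moment formula for the cycle counts. The paper does not give its own proof of this proposition but simply cites it as known from \cite{wieand2000eigenvalue} and \cite{ben2015fluctuations}, where precisely this argument is used.
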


\begin{proposition}\label{prop:EspVarX}
There exists a real number $c_1=c_1(\alpha, \beta)$ and a positive real number $c_2=c_2(\alpha , \beta)$ such that
\begin{equation}
\mathbb{E} (X_N^I) \underset{N\to \infty}{=} N(\beta - \alpha) + c_1 \theta \log N + o (\log N)
\end{equation}
\begin{equation}
\mathrm{Var} (X_N^I) \underset{N\to \infty}{\sim} c_2 \theta \log N. 
\end{equation}
\end{proposition}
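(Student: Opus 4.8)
\emph{Strategy.} The plan is to start from the closed forms for $\mathbb{E}(X_N^I)$ and $\mathrm{Var}(X_N^I)$ recorded in the previous proposition and to extract their $\log N$-asymptotics by running them through the Cesàro-mean machinery of Section~\ref{sec:2}. Everything rests on one implication: \emph{for any bounded sequence $(w_j)_{j\ge 1}$ (with the convention $w_0=0$),}
\[\frac1n\sum_{j=1}^n w_j \underset{n\to\infty}{\longrightarrow} m \qquad\Longrightarrow\qquad \sum_{j=1}^n \frac{\Psi_n(j)}{j}\,w_j = m\log n + o(\log n).\]
Indeed, $\frac1n\sum_{j=1}^n w_j\to m$ says exactly that $(w_j)$ converges $(C,1)$ to $m$; being bounded, it then converges $(C,\theta)$ to $m$ by Proposition~\ref{prop:cesaro}, i.e. $\sigma_n^\theta(w)\to m$; by the Remark following the definition of $\sigma_n^\theta$ this means $\frac1n\sum_{j=1}^n\Psi_n(j)\,w_j=\frac{\theta+n}{\theta n}\sigma_n^\theta(w)\to m/\theta$; and finally Proposition~\ref{prop:log}, applied with $L=m/\theta$, gives the displayed conclusion. (Proposition~\ref{prop:log} is stated for non-negative $w_j$, but its proof uses only the purely algebraic identity $t_n=(Ms)_n$ together with the regularity Lemma~\ref{lem:regular}, so it applies to any bounded signed sequence; alternatively one splits $w_j$ into its positive and negative parts.) The second ingredient is Lemma~\ref{lem:borne}, which tames the off-diagonal double sum.

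\emph{Mean.} Write $\omega_j=\{j\beta\}-\{j\alpha\}$, so $|\omega_j|<1$. By Weyl's equidistribution theorem (equivalently, the one-variable instance of Lemma~\ref{lem:Hcesaro}) the averages $\frac1n\sum_{j=1}^n\omega_j$ converge to some $\ell_1=\ell_1(\alpha,\beta)$. The implication above then yields $\sum_{j=1}^N\frac{\omega_j}{j}\Psi_N(j)=\ell_1\log N+o(\log N)$, which plugged into $\mathbb{E}(X_N^I)=N(\beta-\alpha)-\theta\sum_{j=1}^N\frac{\omega_j}{j}\Psi_N(j)$ gives the assertion with $c_1:=-\ell_1$. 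Note that $c_1$ depends only on $(\alpha,\beta)$: the parameter $\theta$ cancels because, for bounded sequences, the $(C,\theta)$- and $(C,1)$-limits coincide.

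\emph{Variance.} Split the closed form as $\mathrm{Var}(X_N^I)=A_N+B_N$ with
\[A_N:=\theta\sum_{j=1}^N\frac{\omega_j^2}{j}\Psi_N(j),\qquad B_N:=\theta^2\!\!\sum_{1\le j,k\le N}\!\!\frac{\omega_j\omega_k}{jk}\bigl(\Psi_N(j+k)\mathds{1}_{j+k\le N}-\Psi_N(j)\Psi_N(k)\bigr).\]
For $A_N$: the sequence $(\omega_j^2)$ is non-negative, bounded by $1$, and $\frac1n\sum_{j=1}^n\omega_j^2\to c(\alpha,\beta,\alpha,\beta)=:c_2$ by Lemma~\ref{lem:Hcesaro}; moreover $c_2>0$, since writing $\delta=\beta-\alpha\in(0,1)$ one has $\omega_j\in\{\{j\delta\},\{j\delta\}-1\}$, hence $\omega_j^2\ge\|j\delta\|^2$ (where $\|x\|$ is the distance from $x$ to $\mathbb{Z}$), whose Cesàro average is strictly positive because $\delta\notin\mathbb{Z}$. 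The implication above therefore gives $A_N=\theta c_2\log N+o(\log N)$. For $B_N$: using $|\omega_j\omega_k|\le1$ and Lemma~\ref{lem:borne},
\[|B_N|\le\theta^2\sum_{1\le j,k\le N}\frac1{jk}\bigl|\Psi_N(j+k)\mathds{1}_{j+k\le N}-\Psi_N(j)\Psi_N(k)\bigr|=\mathcal{O}_\theta(1)=o(\log N).\]
Combining, $\mathrm{Var}(X_N^I)=\theta c_2\log N+o(\log N)\sim c_2\theta\log N$, as required.

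\emph{Main obstacle.} The proof is thus mostly a matter of correctly invoking the preliminaries; the only genuinely delicate bookkeeping is the strict positivity of $c_2$ (so the variance does not degenerate, which is what makes the normalizations of Theorem~\ref{th:1} meaningful) and checking that $c_1$ and $c_2$ are functions of $(\alpha,\beta)$ alone. In effect the real obstacle has already been cleared in Lemma~\ref{lem:borne}: that the ``covariance'' double sum $B_N$ is $\mathcal{O}_\theta(1)$ rather than of order $\log N$ is exactly what makes the variance asymptotics as clean as stated, and its proof — splitting the range of $(j,k)$ and using the bound $\Psi_n(j)\ll_\theta(n/(n-j))^{1-\theta}$ — is where the analytic work really lives.
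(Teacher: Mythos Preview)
Your proof is correct and is precisely the argument the paper's preliminaries are set up to deliver. The paper itself does not prove this proposition in the text (it is quoted as a known result from \cite{wieand2000eigenvalue} and \cite{ben2015fluctuations}), but your route --- pass from $(C,1)$- to $(C,\theta)$-convergence via Proposition~\ref{prop:cesaro}, then to the $\log N$ asymptotics via Proposition~\ref{prop:log}, and kill the double sum $B_N$ with Lemma~\ref{lem:borne} --- is exactly the scheme the paper employs for the companion statement \eqref{eq:asymptVarXtilde} on $\widetilde{X}_N^I$ and again in \eqref{eq:asymptVarXMesoStep1} for the mesoscopic variance of $X_N^{I_N}$. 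Your remarks that Proposition~\ref{prop:log} extends to signed bounded sequences (its proof being the linear identity $t_n=(Ms)_n$ plus Lemma~\ref{lem:regular}), and that $\omega_j^2\ge\|j\delta\|^2$ forces $c_2>0$ for $\delta\in(0,1)$, are both sound and fill small gaps the paper leaves implicit.
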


We give two significant examples of values taken by $c_2$:
\begin{itemize}
	\item If $\alpha$ and $\beta$ are irrational and linearly independent over $\mathbb{Q}$, then $c_2=\frac{1}{6}$.
	\item If $\beta$ is irrational and $\alpha = \frac{p}{q}$ with $p,q$ coprime numbers, then $c_2= \frac{1}{6} + \frac{1}{6q^2}$.
\end{itemize}

Details for the computation of the coefficient $c_2$ and more examples are given in \cite{wieand2000eigenvalue}. We complete its study in our Appendix.

\subsubsection{\texorpdfstring{Wreath product $S^1 \wr \mathfrak{S}_N$}{Wreath product}}

To begin with, we give a simple expression of $\widetilde{X}_N^I$ in function of the random variables $(a_{N,j})_{1\leq j \leq N}$ and $(T_{j,p})$, where the law of $T_{j,p}$ is the multiplicative convolution of $j$ independent copies of the uniform distribution on $S^1$, \emph{i.e} the uniform distribution on $S^1$ itself, and where we recall that $a_{N,j}$ denotes the number of $j$-cycles in $\sigma_N$. We have the following equalities in distribution:
\begin{equation}
\widetilde{X}_N^I = \sum_{j=1}^N \sum_{p=1}^{a_{N,j}} \sum_{w^j = T_{j,p}} \mathds{1}_{w\in I} = \sum_{j=1}^N \sum_{p=1}^{a_{N,j}} \sum_{w^j = \mathrm{e}^{2i\pi \phi_{j,p}} } \mathds{1}_{w\in I} 
\end{equation}
where the $(\phi_{j,p})$ are i.i.d random variables, uniformly distributed on $[0,1)$.\\
Recalling that $I = \left( \mathrm{e}^{2i\pi \alpha} , \mathrm{e}^{2i\pi \beta} \right]$, this can be reformulated this way:
\begin{align*}
\widetilde{X}_N^I &= \sum_{j=1}^N \sum_{p=1}^{a_{N,j}} \left(j (\beta - \alpha ) - \{j\beta - \phi_{j,p} \} + \{j\alpha - \phi_{j,p} \} \right) \\
	&= (\beta - \alpha ) \sum_{j=1}^N j a_{N,j} - \sum_{j=1}^N \sum_{p=1}^{a_{N,j}} \left( \{j\beta \} - \{j\alpha \} + \mathds{1}_{\phi_{j,p} > \{ j \beta \} } - \mathds{1}_{\phi_{j,p} > \{ j \alpha \} }   \right) \\
	&= N (\beta - \alpha ) - \sum_{j=1}^N a_{N,j} \left( \{j\beta \} - \{j\alpha \}\right)  - \sum_{j=1}^N \sum_{p=1}^{a_{N,j}} \left(  \mathds{1}_{\phi_{j,p} > \{ j \beta \} } - \mathds{1}_{\phi_{j,p} > \{ j \alpha \} }    \right).
\end{align*}

\begin{remark}
It can be noticed that in contrast to the classical ensemble of permutation matrices, if we include the lower endpoint and/or exclude the upper endpoint of the interval, then almost surely the value of the counting function on this interval remains the same. 
\end{remark}

\begin{proposition}
For all $N\geq 1$,
\begin{equation}
\mathbb{E} (\widetilde{X}_N^I ) = N(\beta - \alpha)
\end{equation}
\begin{equation}\label{eq:varXtilde}
\mathrm{Var} (\widetilde{X}_N^I ) = \theta \sum_{j=1}^N \frac{\Psi_N (j)}{j}  \{ j (\beta -\alpha) \}\left( 1 - \{ j(\beta-\alpha) \} \right). 
\end{equation}
\end{proposition}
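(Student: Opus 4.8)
The plan is to work directly from the distribution identity derived just above, namely
\[\widetilde{X}_N^I = N(\beta-\alpha) - \sum_{j=1}^N a_{N,j}\,\omega_j - \sum_{j=1}^N\sum_{p=1}^{a_{N,j}}\bigl(\mathds{1}_{\phi_{j,p}>\{j\beta\}} - \mathds{1}_{\phi_{j,p}>\{j\alpha\}}\bigr)\quad\text{in law},\]
with $\omega_j := \{j\beta\}-\{j\alpha\}$, and to compute both moments by conditioning on the cycle counts $(a_{N,j})_{1\le j\le N}$, exploiting that the i.i.d.\ uniform family $(\phi_{j,p})$ is independent of $\sigma_N$ (hence of the $a_{N,j}$). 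Throughout I use the Ewens identity $\mathbb{E}(a_{N,j}) = \tfrac{\theta}{j}\Psi_N(j)$, already used in the variance computation for $K_n$ above.

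For the mean, write $\xi_{j,p} := \mathds{1}_{\phi_{j,p}>\{j\beta\}} - \mathds{1}_{\phi_{j,p}>\{j\alpha\}}$. Since $\phi_{j,p}$ is uniform on $[0,1)$ and independent of $(a_{N,j})$, one gets $\mathbb{E}[\xi_{j,p}\mid (a_{N,j})] = (1-\{j\beta\}) - (1-\{j\alpha\}) = -\omega_j$, so $\mathbb{E}\bigl[\sum_{p=1}^{a_{N,j}}\xi_{j,p}\bigr] = -\mathbb{E}(a_{N,j})\,\omega_j$. Summing over $j$, the last two sums in the identity have opposite expectations, hence cancel, and $\mathbb{E}(\widetilde{X}_N^I) = N(\beta-\alpha)$.

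For the variance, the key step is to center: with $\eta_{j,p} := \xi_{j,p}+\omega_j$ we have $\sum_{p=1}^{a_{N,j}}\xi_{j,p} = \sum_{p=1}^{a_{N,j}}\eta_{j,p} - a_{N,j}\omega_j$, so that $\widetilde{X}_N^I - N(\beta-\alpha) = -\sum_{j=1}^N\sum_{p=1}^{a_{N,j}}\eta_{j,p}$, a \emph{centered} double sum in which, for each fixed $j$, the $\eta_{j,p}$ are i.i.d., the families for different $j$ are mutually independent, and the whole collection is independent of $(a_{N,j})$. Conditioning on $(a_{N,j})$, every cross term $\mathbb{E}[\eta_{j,p}\eta_{j',p'}\mid(a_{N,j})]$ with $(j,p)\neq(j',p')$ vanishes by conditional independence and conditional centering, whence $\mathbb{E}\bigl[\bigl(\sum\eta_{j,p}\bigr)^2\bigm|(a_{N,j})\bigr] = \sum_{j=1}^N a_{N,j}\,v_j$ with $v_j := \mathrm{Var}(\eta_{j,1})$. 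Taking expectations gives $\mathrm{Var}(\widetilde{X}_N^I) = \sum_{j=1}^N \tfrac{\theta}{j}\Psi_N(j)\,v_j$.

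It remains to identify $v_j$, and this is the only point needing a little care, because $\{j\beta\}-\{j\alpha\}$ and $\{j(\beta-\alpha)\}$ can differ by $1$. If $\{j\alpha\}\le\{j\beta\}$ then $\xi_{j,1} = -\mathds{1}_{\{j\alpha\}<\phi_{j,1}\le\{j\beta\}}$, an event of probability $\omega_j = \{j(\beta-\alpha)\}$, so $\eta_{j,1}$ is a centered Bernoulli variable of parameter $\{j(\beta-\alpha)\}$; if $\{j\alpha\}>\{j\beta\}$ then $\xi_{j,1} = \mathds{1}_{\{j\beta\}<\phi_{j,1}\le\{j\alpha\}}$, an event of probability $\{j\alpha\}-\{j\beta\} = 1-\{j(\beta-\alpha)\}$, and $\eta_{j,1}$ is again a centered Bernoulli variable, now of parameter $1-\{j(\beta-\alpha)\}$. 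Since the Bernoulli variance is symmetric under $p\mapsto 1-p$, in both cases $v_j = \{j(\beta-\alpha)\}\bigl(1-\{j(\beta-\alpha)\}\bigr)$, which yields exactly \eqref{eq:varXtilde}. There is no real obstacle here: the only things to get right are the independence bookkeeping in the conditioning step and the mod-$1$ case distinction for $v_j$.
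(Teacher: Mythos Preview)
Your proof is correct and follows essentially the same route as the paper: both define the centered summands $\eta_{j,p}$ (the paper calls them $b_{j,p}$, with the opposite sign), condition on the cycle structure to kill all cross terms, and reduce the variance to $\sum_j \mathbb{E}(a_{N,j})\,v_j$ with $\mathbb{E}(a_{N,j})=\tfrac{\theta}{j}\Psi_N(j)$. The only noteworthy difference is in the identification of $v_j$: the paper first computes $\mathbb{E}(b_{j,p}^2)=|\{j\alpha\}-\{j\beta\}|-(\{j\alpha\}-\{j\beta\})^2$ and then proves a separate translation-invariance lemma to convert this into $\{j(\beta-\alpha)\}(1-\{j(\beta-\alpha)\})$, whereas you go straight to the Bernoulli structure via the case distinction $\{j\alpha\}\lessgtr\{j\beta\}$, which is a bit more direct and avoids the auxiliary lemma.
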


\begin{remark}
For the classical ensemble of permutation matrices, we had a expectation which weakly depended (additional term in $\log N$) on the arithmetic nature of the endpoints of the interval. This is not the case here, and this phenomenon can be well understood since the modification operates uniform random shifts on the sets of eigenangles corresponding to each cycle. Regarding the variance, the effect of endpoints does not vanish since we still have some fractional parts in its expression. More specifically, we have an effect induced by the difference of the endpoints.
\end{remark}

\begin{proof}
First of all we consider the conditional expectation with respect to the random permutation $\sigma_N$. We have
\[\mathbb{E} [ \widetilde{X}_N^I \ | \ \sigma_N ] = N(\beta - \alpha)  + \sum_{j=1}^N \sum_{p=1}^N \mathbb{E} [ b_{j,p} \mathds{1}_{a_{N,j} \geq p } \ |  \ \sigma_N ]  \]
where $b_{j,p} := \{ j \alpha \} - \{ j \beta \} + \mathds{1}_{\phi_{j,p} > \{ j \alpha \} } - \mathds{1}_{\phi_{j,p} > \{ j \beta \} }$. \\
Since the information of $\sigma_N$ provides all the information of its cycle structure (\emph{i.e} the numbers of its cycles of the same sizes), it follows that for all $j,p$, 
\[\mathbb{E} [ b_{j,p} \mathds{1}_{a_{N,j} \geq p } \ |  \ \sigma_N ] = \mathds{1}_{a_{N,j} \geq p } \mathbb{E} [ b_{j,p}  \ |  \ \sigma_N ].\] Moreover the $b_{j,p}$ are independent of $\sigma_N$, hence \[\mathbb{E} [ b_{j,p}  \ |  \ \sigma_N ] = \mathbb{E} (b_{j,p}) = \{ j \alpha \} - \{ j \beta \} + \mathbb{P} (\phi_{j,p} > \{ j \beta \} ) - \mathbb{P} (\phi_{j,p} > \{ j \alpha \} )=0.\] Consequently all the terms in the double series are zeros. Finally,
\[\mathbb{E} ( \widetilde{X}_N^I )= \mathbb{E} [\mathbb{E} [ \widetilde{X}_N^I \ | \ \sigma_N ]] = \mathbb{E} [ N (\beta - \alpha ) ] = N (\beta - \alpha).\]

The computation of the variance is a little longer. Using the fact that the $b_{j,p}$ are centred and independent,
\begin{align*}
\mathrm{Var} (\widetilde{X}_N^I) &= \mathbb{E} ((\widetilde{X}_N^I - \mathbb{E}(\widetilde{X}_N^I))^2) \\
	&= \mathbb{E} \left( \left[ \sum_{j=1}^N \sum_{p=1}^N \mathbb{E} ( b_{j,p} \mathds{1}_{a_{N,j} \geq p } )  \right]^2 \right) \\
	&= \sum_{j=1}^N \mathbb{E} \left( \left( \sum_{p=1}^{a_{N,j}} b_{j,p} \right)^2 \right) + 2 \sum_{1\leq j < k \leq N} \sum_{p=1}^N \sum_{m=1}^N \mathbb{E} ( b_{j,p} b_{k,m} \mathds{1}_{a_{N,j} \geq p } \mathds{1}_{a_{N,k} \geq m }) \\
	&= \sum_{j=1}^N \left[ \sum_{p=1}^N \mathbb{E} (b_{j,p}^2 \mathds{1}_{a_{N,j} \geq p} ) + 2 \sum_{1\leq m < p \leq N} \mathbb{E} (b_{j,p} b_{j,m} \mathds{1}_{a_{N,j} \geq p} \mathds{1}_{a_{N,j} \geq m}) \right] \\
	&= \sum_{j=1}^N \sum_{p=1}^N \mathbb{E} (b_{j,p}^2) \mathbb{P} (a_{N,j} \geq p)  
\end{align*}
with for all $j,p$, 
\begin{align*}
\mathbb{E} (b_{j,p}^2) &= \mathrm{Var} ( \mathds{1}_{\phi_{j,p} > \{ j \alpha \} } - \mathds{1}_{\phi_{j,p} > \{ j \beta \} }) \\
	&= \mathbb{E} \left( \mathds{1}_{\phi_{j,p} > \{ j \alpha \} } + \mathds{1}_{\phi_{j,p} > \{ j \beta \} } - 2 \mathds{1}_{\phi_{j,p} > \{ j \alpha \} } \mathds{1}_{\phi_{j,p} > \{ j \beta \} } \right) - ( \{ j \beta \} - \{ j \alpha \})^2 \\
	&= 2 - \{ j \alpha \} - \{ j \beta \} - 2 \mathbb{P} ( \phi_{j,p} > \max (\{ j \alpha \}, \{ j \beta \}) ) -( \{ j \beta \} - \{ j \alpha \})^2  \\
	&= |\{ j\alpha \} - \{ j\beta \}| - (\{ j\alpha \} - \{ j\beta \})^2 .
\end{align*}
Hence,
\begin{align*}
\mathrm{Var} (\widetilde{X}_N^I) &= \sum_{j=1}^N \left[ |\{ j\alpha \} - \{ j\beta \}| - (\{ j\alpha \} - \{ j\beta \})^2 \right] \sum_{p=1}^N \mathbb{P} (a_{N,j} \geq p) \\
	&= \sum_{j=1}^N \mathbb{E} (a_{N,j}) \left[ |\{ j\alpha \} - \{ j\beta \}| - (\{ j\alpha \} - \{ j\beta \})^2 \right]
\end{align*}
and we know (see \cite{arratia2003logarithmic} page 96) that for all $1\leq j \leq N$, 
\begin{equation}
\mathbb{E} [a_{N,j}] = \frac{\theta}{j} \cdot \frac{N(N-1) \cdots (N-j+1) }{(\theta + N-1) \cdots (\theta + N-j)} = \frac{\theta}{j} \Psi_N (j).
\end{equation}
It remains to see that for all $j$, $|\{ j\alpha \} - \{ j\beta \}| - (\{ j\alpha \} - \{ j\beta \})^2 = \{ j(\beta - \alpha ) \} (1 - \{j (\beta - \alpha ) \})$. Indeed, it derives from the next lemma:
\begin{lemma}
Let $x,y$ be real numbers. Then for all $T\in \mathbb{R}$, \[  \left| \{x+T\} - \{y+T\} \right| (1 - \left| \{x+T\} - \{y+T\} \right| ) =  \left| \{x\} - \{y\} \right| (1 - \left| \{x\} - \{y\} \right| ). \]
\end{lemma}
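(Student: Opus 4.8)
The plan is to reduce the identity to the single observation that the parabola $s \mapsto s(1-s)$ is symmetric about $s=\tfrac12$, i.e. $s(1-s) = (1-s)\bigl(1-(1-s)\bigr)$ for every real $s$. Granting this, it suffices to prove that for every $T\in\mathbb{R}$ the quantity $\left|\{x+T\}-\{y+T\}\right|$ is equal to either $\gamma$ or $1-\gamma$, where $\gamma := \left|\{x\}-\{y\}\right| \in [0,1)$; feeding this value of $s$ into the symmetry relation then yields exactly the claimed equality, whose right-hand side is just the case $T=0$.

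To establish that dichotomy I would set $p:=\{x+T\}$ and $q:=\{y+T\}$, which both lie in $[0,1)$, so $p-q\in(-1,1)$. Since $p\equiv x+T$ and $q\equiv y+T \pmod 1$, we get $p-q\equiv x-y\equiv \{x\}-\{y\}\pmod 1$, and $\{x\}-\{y\}\in(-1,1)$ as well. Two numbers of $(-1,1)$ congruent modulo $1$ differ by an element of $\{-1,0,1\}$, so $p-q\in\{\{x\}-\{y\},\ \{x\}-\{y\}-1,\ \{x\}-\{y\}+1\}$. Discussing the sign of $\{x\}-\{y\}$ (when it is $\geq 0$ only the shift by $-1$ keeps the result in $(-1,1)$, and symmetrically when it is $<0$ only the shift by $+1$ does), one finds in each case $\left|p-q\right|\in\{\gamma,\,1-\gamma\}$.

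Combining the two steps gives
\[ \left|\{x+T\}-\{y+T\}\right|\left(1-\left|\{x+T\}-\{y+T\}\right|\right) = \gamma(1-\gamma) = \left|\{x\}-\{y\}\right|\left(1-\left|\{x\}-\{y\}\right|\right), \]
which is the statement. The whole argument is elementary and there is no analytic content; the only point requiring a little care is the sign bookkeeping in the three-case split, i.e. checking precisely which of the shifts $0,\pm1$ can occur for a given sign of $\{x\}-\{y\}$ — beyond that, nothing is delicate.
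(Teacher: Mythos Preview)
Your proof is correct and follows essentially the same route as the paper's: both arguments boil down to the observation that $\{x+T\}-\{y+T\}$ and $\{x\}-\{y\}$ lie in $(-1,1)$ and differ by an integer in $\{-1,0,1\}$, after which the identity is immediate. Your explicit appeal to the symmetry $s(1-s)=(1-s)s$ is a tidy way to collapse the cases, whereas the paper writes down the indicator-function formula $\{x+T\}-\{y+T\}=\{x\}-\{y\}+\mathds{1}_{\{y\}>1-T}-\mathds{1}_{\{x\}>1-T}$ (after reducing to $T\in[0,1)$) and then simply remarks that ``in all cases the equality holds.''
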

\begin{proof}
Without loss of generality we can suppose $T \in [0,1)$ (since $\{x+T\} = \{x + \{T\} \}$). We notice first that \[\{x+T\} - \{y+T\} = \{x\} - \{y\} + \mathds{1}_{ \{y\} > 1 -T } - \mathds{1}_{ \{x\} > 1 -T }.  \]
By discussing the relative positions of $\{ x\}$ and $\{y\}$ with respect to $1-T$, the difference of indicator functions $\mathds{1}_{ \{y\} > 1 -T } - \mathds{1}_{ \{x\} > 1 -T }$ takes the value $-1,0$ or $1$. Thus it is easy to check that in all cases the equality holds.
\end{proof}
Applying this lemma with $x=j\beta$ and $y=-T=j\alpha$ for $j\in \mathbb{N}^*$, we deduce \eqref{eq:varXtilde}.
\end{proof}

The next proposition is conform to the intuition one could have as regards with the asymptotic of the variance. We make use of what we set up in preliminaries to prove it.

\begin{proposition}
\begin{equation}\label{eq:asymptVarXtilde}
\mathrm{Var} (\widetilde{X}_N^I) \underset{N\to \infty}{\sim} \ell \theta \log N 
\end{equation}
where \[\ell := \left\{\begin{array}{ll}
\frac{1}{6} & \text{if } \beta-\alpha \text{ is irrationnal}\\
\frac{1}{6} - \frac{1}{6q^2} & \text{if } \beta-\alpha=\frac{p}{q} \text{ with } p,q \text{ coprime integers}, q\geq 2.
\end{array}\right.\]
\end{proposition}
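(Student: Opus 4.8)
The plan is to start from the closed form \eqref{eq:varXtilde} for the variance. Writing $\gamma := \beta-\alpha$ and $w_j := h_j(\gamma) = \{j\gamma\}(1-\{j\gamma\})$, we have
\[ \mathrm{Var}(\widetilde{X}_N^I) = \theta\sum_{j=1}^N \frac{\Psi_N(j)}{j}\, w_j , \]
with $(w_j)$ a non-negative sequence bounded by $\tfrac14$. This is exactly the shape to which Proposition~\ref{prop:log} applies: if the Cesàro-type average $L := \lim_{N\to\infty}\frac1N\sum_{j=1}^N \Psi_N(j)w_j$ exists and is positive, then $\sum_{j=1}^N \frac{\Psi_N(j)}{j}w_j \sim L\theta\log N$, hence $\mathrm{Var}(\widetilde{X}_N^I)\sim L\theta^2\log N$. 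So everything reduces to identifying $L$ and checking $L>0$.

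To compute $L$, I would invoke the reformulation of the Cesàro mean stated right after the definition of $\sigma_n^\theta$: since $w_0=0$, $\sum_{j=1}^n \Psi_n(j)w_j = \frac{\theta+n}{\theta}\,\sigma_n^\theta(w)$, so that $\frac1n\sum_{j=1}^n \Psi_n(j)w_j = \big(\tfrac1\theta+\tfrac1n\big)\sigma_n^\theta(w)$ and $L = \tfrac1\theta\lim_n \sigma_n^\theta(w)$ as soon as the latter limit exists. Since $(w_j)$ is bounded, Proposition~\ref{prop:cesaro} lets me replace the order-$\theta$ Cesàro mean by the ordinary one: it suffices to show that $\sigma_n^1(w) = \frac1{n+1}\sum_{j=0}^n w_j = \frac1{n+1}\sum_{j=1}^n h_j(\gamma)$ converges to some $\ell$, and then automatically $\sigma_n^\theta(w)\to\ell$, giving $L=\ell/\theta$.

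The one genuinely arithmetic step is then the evaluation of $\ell = \lim_{n\to\infty}\frac1n\sum_{j=1}^n \{j\gamma\}(1-\{j\gamma\})$. If $\gamma$ is irrational, Weyl's equidistribution theorem applied to the Riemann-integrable function $x\mapsto x(1-x)$ gives $\ell = \int_0^1 x(1-x)\,dx = \tfrac16$. If $\gamma = p/q$ with $p,q$ coprime and $q\geq2$, then $j\mapsto\{j\gamma\}$ is $q$-periodic, and since $\gcd(p,q)=1$ it takes each of the values $0,\tfrac1q,\dots,\tfrac{q-1}q$ exactly once per period; averaging a periodic sequence over one period yields $\ell = \frac1q\sum_{r=0}^{q-1}\frac rq\big(1-\frac rq\big) = \frac{1}{q^3}\cdot\frac{q(q-1)(q+1)}{6} = \frac16-\frac1{6q^2}$. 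In both cases $\ell>0$ (indeed $\tfrac16-\tfrac1{6q^2}\geq\tfrac18$ for $q\geq2$), so $L=\ell/\theta>0$, Proposition~\ref{prop:log} applies, and $\mathrm{Var}(\widetilde{X}_N^I)\sim L\theta^2\log N = \ell\theta\log N$, which is \eqref{eq:asymptVarXtilde}.

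There is no deep obstacle: the machinery (Propositions~\ref{prop:log} and \ref{prop:cesaro}, together with Lemma~\ref{lem:EqPsi}) was assembled in the preliminaries, and what remains is bookkeeping — verifying the hypotheses of Proposition~\ref{prop:log}, passing between Cesàro orders, and the equidistribution/periodicity computation of $\ell$. The only subtlety worth flagging is the degenerate case $\gamma\in\mathbb{Z}$ (i.e. $I$ the whole circle up to a point), where $w_j\equiv0$, $L=0$, and the argument breaks down; but that case is precisely the one excluded from the statement (it corresponds to $q=1$, whereas we assume $q\geq2$ in the rational case).
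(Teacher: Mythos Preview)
Your proof is correct and follows essentially the same route as the paper's: write the variance as $\theta\sum_{j=1}^N\frac{\Psi_N(j)}{j}w_j$ with $w_j=\{j\gamma\}(1-\{j\gamma\})$, use Proposition~\ref{prop:cesaro} to pass from $(C,1)$-convergence of $(w_j)$ to $(C,\theta)$-convergence, deduce $\frac{1}{N}\sum\Psi_N(j)w_j\to\ell/\theta$, and then apply Proposition~\ref{prop:log}. The only cosmetic difference is that the paper computes $\ell$ as $\ell_1-\ell_2$ by citing Wieand for the Ces\`aro limits of $\{j\delta\}$ and $\{j\delta\}^2$ separately, whereas you evaluate $\ell$ directly via Weyl equidistribution in the irrational case and an explicit average over one period in the rational case; your observation that the case $q=1$ is degenerate and excluded is also a point the paper leaves implicit.
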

\begin{proof}
Let us define $\delta = \beta - \alpha$ and $w_j = \{j \delta \} (1 - \{j \delta \} )$, $j \in \mathbb{N}^*$, in such a way that
\[\mathrm{Var} (\widetilde{X}_N^I) =  \theta \sum_{j=1}^N \frac{\Psi_N (j)}{j} w_j.\] 
First, we notice that the sequence $(w_j)$ is non-negative and bounded (by $1$). Moreover, it is proven in \cite{wieand2000eigenvalue} that the limits $\ell_1 := \lim\limits_{N \to \infty} \frac{1}{N} \sum\limits_{j=1}^N  \{j\delta\}$ and $\ell_2 := \lim\limits_{N \to \infty} \frac{1}{N} \sum\limits_{j=1}^N  \{j\delta\}^2$ exist and are finite. Their respective explicit values depend on whether $\delta$ is rational or irrational. More precisely,
\begin{itemize}
	\item if $\delta$ is irrational, then $\ell_1 = \frac{1}{2}$ and $\ell_2 = \frac{1}{3}$, 
	\item if $\delta = \frac{p}{q}$ with $q\geq 1$ and $\mathrm{gcd}(p,q)=1$, then $\ell_1 = \frac{q(q-1)}{	2q^2}$ and $\ell_2 = \frac{(2q-1)q(q-1)}{6q^3}$.
\end{itemize}
Thus, the sequence $(w_j)$ converges $(C,1)$ to $\ell := \ell_1 -\ell_2>0$. Consequently, we can apply Proposition~\ref{prop:cesaro} on $w=(w_j)$ so that $\sigma_N^\theta (w) \underset{N\to \infty}{\longrightarrow} \ell$. \\
Finally, since $s_N := \frac{1}{N} \sum_{j=1}^N \Psi_N (j) w_j \underset{N\to \infty}{\sim} \frac{\sigma_N^\theta (w)}{\theta}$, then $s_N  \underset{N\to \infty}{\longrightarrow} \frac{\ell}{\theta} = :L >0$. It follows from Proposition~\ref{prop:log} that \[ \sum\limits_{j=1}^N \frac{\Psi_N (j)}{j} w_j \underset{N\to \infty}{\sim} L \theta \log N = \ell \log N \]
which implies \eqref{eq:asymptVarXtilde}.
The computation of $\ell_1$ and $\ell_2$ is detailed in \cite{wieand2000eigenvalue}.
\end{proof}

\subsection{Limiting normality for a finite number of fixed arcs}

We consider a finite number of fixed arcs $I_1 , \cdots , I_m$ on the unit circle, where $I_k := \left( e^{2i\pi \alpha_k} , e^{2i\pi \beta_k} \right]$. For $1\leq k \leq m$, we denote by $c_2^{(k)}$ and $\ell^{(k)}$ the respective constant numbers appearing in the asymptotic expressions of the variances of $X_N^{I_k}$ and $\widetilde{X}_N^{I_k}$. \\ 
In order to simplify notations, we also define for $1\leq k,l \leq m$,
\[
\omega_{j,k} := \{j\beta_k \} - \{ j\alpha_k \} \]
and 
\[H_{j,k,l} := \frac{1}{2} \left( \vert \{j\beta_k \} - \{ j\alpha_l \} \vert + \vert \{j\alpha_k \} - \{ j\beta_l \} \vert - \vert \{j\alpha_k \} - \{ j\alpha_l \} \vert   - \vert \{j\beta_k \} - \{ j\beta_l \} \vert    \right) - \omega_{j,k} \omega_{j,l}.\]

\begin{lemma}\label{lem:Hcesaro}
The sequences $(H_{j,k,l})_{j\geq 1}$ and $(\omega_{j,k} \omega_{j,l})_{j\geq 1}$ converge $(C,1)$.
\end{lemma}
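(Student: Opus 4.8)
The plan is to recognize both sequences as the values, along the arithmetic progression $j\mapsto j\gamma$, of a single bounded Riemann‑integrable function on a torus, and then to combine Weyl's equidistribution theorem with a reduction to residue classes that isolates the rational coordinates. Note first that $(C,1)$‑convergence in the sense of the paper is just ordinary Cesàro convergence of $\frac1N\sum_{j=1}^N w_j$, and that, for a fixed pair $(k,l)$, both $\omega_{j,k}\omega_{j,l}$ and $H_{j,k,l}$ have the form $\Phi\bigl(j\alpha_k,j\beta_k,j\alpha_l,j\beta_l\bigr)$ for a single function $\Phi\colon\mathbb{T}^4\to\mathbb{R}$ of the shape $\Phi(x)=\psi\bigl(\{x_1\},\dots,\{x_4\}\bigr)$ with $\psi$ continuous on $[0,1]^4$ (it is built only from products, sums, and absolute values of differences of the coordinates). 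In particular $\Phi$ is bounded and continuous off $\bigcup_i\{x_i=0\}$, hence Riemann integrable; the same is true of the functions underlying $c(s,t,u,v)$ and $\widetilde c(s,t,u,v)$ from the Lemma stated above, so it suffices to prove the following general claim: for any real numbers $\gamma_1,\dots,\gamma_d$ and any bounded $\Phi$ on $\mathbb{T}^d$ whose set of discontinuities is contained in $\bigcup_i\{x_i=0\}$, the Cesàro means $\frac1N\sum_{j=1}^N\Phi(j\gamma_1,\dots,j\gamma_d)$ converge.

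To prove the claim, I would split the sum over $j$ according to the residue class of $j$ modulo $Q$, the least common multiple of the denominators of those $\gamma_i$ that are rational. Inside the class $j\equiv a\pmod Q$ every rational coordinate $\{j\gamma_i\}$ is frozen at $\{a\gamma_i\}$, so writing $j=a+mQ$ the remaining coordinates describe the sequence $x_0^{(a)}+m\eta \bmod 1$, $m\ge 0$, where $\eta_i:=Q\gamma_i$ is irrational for every non‑frozen index $i$; freezing coordinates keeps $\Phi$ of the form $\psi_a\circ(\{\cdot\})$ with $\psi_a$ continuous, so the discontinuity set of the slice still lies in a union of coordinate hyperplanes. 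By Weyl's equidistribution theorem, $(x_0^{(a)}+m\eta)_{m\ge 0}$ is equidistributed on $x_0^{(a)}+H_a$, where $H_a:=\overline{\langle\eta\rangle}$ is a closed subtorus of $\mathbb{T}^{d'}$, with respect to its Haar measure; and since each $\eta_i$ is irrational, each coordinate projection $H_a\to\mathbb{T}$ is a surjective continuous homomorphism and therefore pushes Haar measure to Haar measure, so the Haar measure of $H_a$, hence its translate $\mu_{x_0^{(a)}+H_a}$, assigns mass $0$ to every coordinate hyperplane $\{x_i=0\}$. The slice of $\Phi$ is thus continuous $\mu_{x_0^{(a)}+H_a}$‑almost everywhere, and a standard sandwich by continuous functions from above and below — legitimate now that the discontinuity set is null for the limiting measure — shows that the partial Cesàro average over this residue class converges to $\int\Phi\,d\mu_{x_0^{(a)}+H_a}$. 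Summing the finitely many classes $a=0,\dots,Q-1$ yields convergence of $\frac1N\sum_{j=1}^N\Phi(j\gamma)$, hence the lemma.

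The main difficulty, and the reason the residue‑class reduction cannot be skipped, is that the basic building block $x\mapsto\{x\}$ really is discontinuous on $\mathbb{T}$ at $0$, while along $(j\gamma)_j$ the limiting Haar measure is carried by a subtorus that, in the presence of rational coordinates, genuinely charges the hyperplanes $\{x_i=0\}$ — so Riemann integrability of $\Phi$ on $\mathbb{T}^d$ alone does not give convergence, and a naive equidistribution argument fails. Peeling off the rational coordinates first, on which the sequence is honestly periodic and the Cesàro mean reduces to a finite average, removes precisely this obstruction and leaves a problem in which the discontinuities are invisible to the relevant measure. Once convergence is established, the explicit values of the limits follow by computing the resulting torus integrals (and finite averages) case by case according to the $\mathbb{Q}$‑linear relations among $1,\alpha_k,\beta_k,\alpha_l,\beta_l$, exactly as in \cite{wieand2000eigenvalue} and \cite{wieand2003permutation}.
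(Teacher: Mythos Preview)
Your argument is correct and is organized around a genuinely different idea than the paper's. The paper proceeds in two separate ways: for $(H_{j,k,l})_j$ it first exploits the algebraic identity
\[
|\{jx\}-\{jy\}|\bigl(1-|\{jx\}-\{jy\}|\bigr)=h_j(x-y),
\]
so that $H_{j,k,l}$ collapses to a linear combination of four one–variable sequences $h_j(x)=\{jx\}(1-\{jx\})$, each of which is periodic (rational $x$) or equidistributed (irrational $x$); for $(\omega_{j,k}\omega_{j,l})_j$ it instead expands into products $\{jx\}\{jy\}$ and treats these by an explicit case analysis on the rationality and $\mathbb{Q}$–linear dependence of $x,y$, deferring the details to \cite{wieand2000eigenvalue} and Appendix~B. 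Your route replaces both by a single statement about equidistribution of $(j\gamma)_j$ on a coset of the orbit–closure subtorus, with the residue–class reduction absorbing the rational coordinates so that the coordinate hyperplanes carrying the discontinuities of $\{\cdot\}$ become null for the limiting Haar measure. This buys uniformity and avoids the algebraic identity for $H_{j,k,l}$ and the case split for $\omega_{j,k}\omega_{j,l}$, at the cost of invoking the structure of closed subgroups of $\mathbb{T}^d$ and the fact that surjective coordinate projections push Haar to Haar; the paper's approach is more elementary and, via the identity, makes the one–variable nature of $H_{j,k,l}$ transparent, which also explains why the limits $\widetilde c(s,t,u,v)$ are easier to compute than $c(s,t,u,v)$.
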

\begin{proof}
One can notice that \[H_{j,k,l} = \frac{1}{2} ( h_j (\alpha_k - \beta_l )+ h_j (\beta_k - \alpha_l )  - h_j (\alpha_k - \alpha_l ) - h_j (\beta_k - \beta_l ))\] where $h_j (x) := \{jx \}(1 - \{jx\})$.
Moreover, it is clear that for all fixed real numbers $x$,
\begin{itemize}
	\item if $x = \frac{p}{q} \in \mathbb{Q}$, the sequence $(\{j x\})_{j\geq 1}$ is $q$-periodic.
	\item if $x\in \mathbb{R}\setminus \mathbb{Q}$, the sequence $(\{j x\})_{j\geq 1}$ is equidistributed on $[0,1]$.
\end{itemize}
Now, if a sequence is periodic then this sequence converges $(C,1)$. Furthermore, if a sequence is equidistributed on $[0,1]$ then all continuous functions on $[0,1]$ applied to it converge $(C,1)$ to the integral of these functions on $[0,1]$. \\
Thus, for all $x = \alpha_k - \beta_l , \beta_k - \alpha_l , \alpha_k - \alpha_l , \beta_k - \beta_l $, the sequence $(h_j(x))_{j\geq 1}$ converges $(C,1)$. \\
For $\omega_{j,k} \omega_{j,l}$ it is much more difficult since we have to deal with some products of type $\{jx\}\{jy\}$ for $x,y$ real numbers. Discussing the rationality of $x$ and $y$ and eventually their linearly dependence over $\mathbb{Q}$ in the case where they are both irrational, it can be shown that $(\{jx\}\{jy\})_{j\geq 1}$ converges $(C,1)$ to explicit limits, and thus $(\omega_{j,k} \omega_{j,l})_{j\geq 1}$ converges $(C,1)$. See \cite{wieand2000eigenvalue} and our Appendix for details.  
\end{proof}

\everymath{\displaystyle}
\begin{theorem}\label{th:TCLfix}
Let $Y_N^{I_k} := \frac{X_N^{I_k} - \mathbb{E} ( X_N^{I_k}) }{(c_2^{(k)} \theta \log N)^{1/2}}$, and let $Z=(Z_1, \cdots , Z_m) \sim \mathcal{N}(0, D)$ where $D$ is the covariance matrix defined for all $k,l$ by \[D_{k,l} := \frac{1}{\sqrt{c_2^{(k)} c_2^{(l)}}} \lim\limits_{N\to +\infty} \frac{1}{N} \sum\limits_{j=1}^N  \omega_{j,k} \omega_{j,l}. \] Then
\[(Y_N^{I_1} , \cdots , Y_N^{I_m}) \overset{\text{d}}{\longrightarrow} (Z_1, \cdots , Z_m).\]
\end{theorem}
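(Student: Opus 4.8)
The strategy is the classical Cramér–Wold / moment-method argument used by Wieand: it suffices to show that for every fixed vector $(\lambda_1,\dots,\lambda_m)\in\mathbb{R}^m$ the linear combination $\sum_k \lambda_k Y_N^{I_k}$ converges in distribution to $\sum_k \lambda_k Z_k$, and since the limit is Gaussian, it is enough to prove convergence of all moments of this linear combination to the corresponding Gaussian moments (the limiting variables have all moments, and a Gaussian is determined by its moments). Concretely, I would first replace each $X_N^{I_k}$ by its representation in terms of the cycle counts $a_{N,j}$: using the formula recalled in Section~\ref{sec:3}, $X_N^{I_k} - \mathbb{E}(X_N^{I_k})$ equals (up to the deterministic term) $-\sum_{j=1}^N \omega_{j,k}\, a_{N,j}$ plus a centering, so the whole vector is a linear functional of $(a_{N,j})_{1\le j\le N}$.

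Second, I would invoke the Feller coupling (Lemma~\ref{lem:majPoisson}) to replace the $a_{N,j}$ by independent Poisson variables $W_j$ of parameter $\theta/j$, with total $L^1$-error $\mathcal{O}_\theta(1)$. Since the coefficients $\omega_{j,k}$ are bounded by $1$, the difference between $\sum_j \omega_{j,k} a_{N,j}$ and $\sum_j \omega_{j,k} W_j$ is $\mathcal{O}_\theta(1)$ in $L^1$, hence negligible after dividing by $\sqrt{\log N}$. Thus it is equivalent to prove the CLT for the vector with $j$-th contribution built from the independent $W_j$'s (restricted to $j\le N$, or rather with an appropriate truncation whose tail is controlled since $\sum_{j>N}\theta/j$ handled via the variance asymptotics). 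For independent summands one then applies the Lindeberg–Feller multivariate CLT: the normalization is $\sqrt{\log N}$, each summand $\omega_{j,k}(W_j - \theta/j)$ has variance of order $1/j$, so the total variance is of order $\log N$ with the precise constant $c_2^{(k)}\theta$ coming from Proposition~\ref{prop:EspVarX}; the cross-covariance between the $k$-th and $l$-th coordinates is $\theta\sum_{j\le N}\frac{\omega_{j,k}\omega_{j,l}}{j}\cdot(1+o(1))$, which by Proposition~\ref{prop:log} (applied with $w_j=\omega_{j,k}\omega_{j,l}$, whose $(C,1)$-convergence is Lemma~\ref{lem:Hcesaro}) is asymptotic to $\theta\log N$ times $\lim_N \frac1N\sum_j \omega_{j,k}\omega_{j,l}$, giving exactly $\sqrt{c_2^{(k)}c_2^{(l)}}\,D_{k,l}\,\theta\log N$. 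The Lindeberg condition is immediate because each $|\omega_{j,k}(W_j-\theta/j)|$ is stochastically dominated by a fixed Poisson-type variable and the individual variances $1/j$ are $o(\log N)$; alternatively one checks that third absolute moments sum to $o((\log N)^{3/2})$.

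Third, I would assemble these pieces: the vector $\frac{1}{\sqrt{\log N}}\big(\sum_j \omega_{j,k}(W_j-\theta/j)\big)_{k=1}^m$ has covariance matrix converging to $\theta\,(\sqrt{c_2^{(k)}c_2^{(l)}}D_{k,l})_{k,l}$, and Lindeberg–Feller gives joint asymptotic normality; dividing the $k$-th coordinate by the extra $\sqrt{c_2^{(k)}\theta}$ rescales to covariance $D$, and undoing the Feller coupling and the mean-subtraction (the deterministic $N(\beta_k-\alpha_k)$ and the $c_1^{(k)}\theta\log N$ terms cancel against $\mathbb{E}(X_N^{I_k})$, with the $o(\log N)$ error killed by the $\sqrt{\log N}$ normalization) yields the claim for $(Y_N^{I_1},\dots,Y_N^{I_m})$.

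**Main obstacle.** The delicate point is the identification of the limiting covariance, i.e. showing that $\frac1N\sum_{j=1}^N\omega_{j,k}\omega_{j,l}$ converges and then transferring this (via the $\Psi_N(j)/j$ weights and Proposition~\ref{prop:log}) to the logarithmic asymptotics of the true covariance $\mathrm{Cov}(X_N^{I_k},X_N^{I_l})$, together with the uniform control of the off-diagonal Cesàro-type error terms $\theta^2\sum_{j,k}\frac{\omega_{j,k}\omega_{j,l}}{jk}(\Psi_N(j+k)\mathds{1}_{j+k\le N}-\Psi_N(j)\Psi_N(k))$, which is handled by Lemma~\ref{lem:borne} (the $\mathcal{O}_\theta(1)$ bound makes it negligible versus $\log N$). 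The $(C,1)$-convergence of $(\omega_{j,k}\omega_{j,l})_j$ itself requires the case analysis on rationality and $\mathbb{Q}$-linear dependence of the angles alluded to in Lemma~\ref{lem:Hcesaro}, and is the genuinely arithmetic ingredient; everything else is a routine application of Feller coupling plus Lindeberg–Feller. I would also note that the argument for the modified ensemble $\widetilde{X}_N^{I_k}$ (third item of Theorem~\ref{th:1}) is strictly simpler, since there $\widetilde{X}_N^{I_k}-N(\beta_k-\alpha_k) = -\sum_j a_{N,j}\omega_{j,k} - \sum_j\sum_p(\mathds{1}_{\phi_{j,p}>\{j\beta_k\}}-\mathds{1}_{\phi_{j,p}>\{j\alpha_k\}})$ introduces an extra conditionally-independent layer but the same coupling-plus-Lindeberg scheme applies with $H_{j,k,l}$ in place of $\omega_{j,k}\omega_{j,l}$.
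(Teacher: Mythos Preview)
Your proposal is correct and matches the paper's proof: Cram\'er--Wold reduction, Feller coupling to replace the cycle counts $a_{N,j}$ by independent Poisson variables $W_j$ (the $L^1$ error being $\mathcal{O}_\theta(1)$, hence negligible after division by $\sqrt{\log N}$), then Lindeberg--Feller on the resulting triangular array, with the limiting covariance identified via the $(C,1)$-convergence of $(\omega_{j,k}\omega_{j,l})_j$ and Proposition~\ref{prop:log}, concluded by Slutsky. Two cosmetic remarks: your opening mention of the moment method is not what you (or the paper) actually execute---Lindeberg--Feller gives convergence in distribution directly, not moment convergence---and Lemma~\ref{lem:borne} is not needed here, since once the $a_{N,j}$ are replaced by independent $W_j$ the bilinear $\Psi_N(j+k)$ terms never appear.
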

\everymath{}
\begin{proof}
Let $t:=(t_1, \cdots , t_m) \in \mathbb{R}^m$. The theorem will be proven if we show that $t_1 Y_N^{I_1} + \cdots + t_m Y_N^{I_m} \overset{\text{d}}{\longrightarrow} t_1 Z_1+ \cdots + t_m Z_m$.
The main idea of the proof is to replace in the expression $t_1 Y_N^{I_1} + \cdots + t_m Y_N^{I_m}$ the $a_{N,j}$ by independent Poisson random variables $W_j$ given by the Feller Coupling, show that the difference converges in probability to zero, and then use the new expression to show the convergence in distribution, and finally conclude with Slutsky's theorem. Note that this scheme is very typical when one deals with random permutations, since the approximation of cycle counts by Poisson random variables is natural. \\
Let 
\[V_{N,j}^{(t)} := \left( \frac{\theta}{j} - W_j \right) \sum_{k=1}^m \frac{t_k}{(\theta c_2^{(k)} \log N)^{1/2}} \omega_{j,k}.\]
Since the $W_j$ are independent, then $(V_{N,j}^{(t)})_{N\geq 1, 1\leq j \leq N }$ is a triangular array of independent random variables. Let $T_N^{(t)}:= \sum_{j=1}^N V_{N,j}^{(t)}$. Then
\begin{align*}
t_1 Y_N^{I_1} + \cdots + t_m Y_N^{I_m} - T_N^{(t)} &= \sum_{k=1}^m \frac{t_k}{(\theta c_2^{(k)} \log N)^{1/2}} \left[ X_N^{I_k} - \mathbb{E} (X_N^{I_k}) - \sum_{j=1}^N \left( \frac{\theta}{j} - W_j \right) \omega_{j,k} \right] \\
	&= \sum_{k=1}^m \frac{t_k}{(\theta c_2^{(k)} \log N)^{1/2}} \sum_{j=1}^N \left[ \frac{\theta}{j} \Psi_N (j) - \frac{\theta}{j} + W_j - a_{N,j} \right] \omega_{j,k}.
\end{align*}
We have
\begin{align*}
\vert t_1 Y_N^{I_1} + \cdots + t_m Y_N^{I_m} - T_N^{(t)} \vert &\leq \sum_{k=1}^m \frac{\vert t_k \vert }{(\theta c_2^{(k)} \log N )^{1/2}} \left[  \sum_{j=1}^N \frac{\theta}{j} \vert \Psi_N (j) - 1 \vert  + \sum_{j=1}^N \vert W_j - a_{N,j}  \vert \right] 
\end{align*}
with for all $j$, $\vert \Psi_N (j) - 1 \vert = (\Psi_N (j) - 1 ) (\mathds{1}_{0<\theta <1} - \mathds{1}_{\theta \geq 1})$, thus using Lemma~\ref{lem:EqPsi}, 
\[ \sum_{j=1}^N \frac{\theta}{j} \vert \Psi_N (j) - 1 \vert  = \theta \vert 1-\theta \vert \sum_{j=1}^N \frac{1}{j (\theta + j-1)} = \mathcal{O}_\theta (1).\] 
In addition, by Lemma~\ref{lem:majPoisson}, $ \mathbb{E} \left( \sum_{j=1}^N \vert W_j - a_{N,j} \vert \right) =  \mathcal{O}_\theta (1)$. \\ 
Let $\varepsilon >0$. Then, using Markov's inequality,
\[\mathbb{P} ( \vert t_1 Y_N^{I_1} + \cdots + t_m Y_N^{I_m} - T_N^{(t)} \vert > \varepsilon ) \leq \frac{1}{\varepsilon (\log N )^{1/2}} \sum_{k=1}^m  \frac{\vert t_k \vert}{(\theta c_2^{(k)})^{1/2}} \mathcal{O}_\theta (1) \underset{N\to \infty}{\longrightarrow}  0.\]
Now, we want to show that $T_N^{(t)}$ is asymptotically normal. For this purpose, we will check the condition of Lindeberg-Feller on $(V_{N,j}^{(t)})$:
\begin{equation}\label{eq:condLindeberg}
\lim_{N\to\infty} \sum_{j=1}^N \mathbb{E} \left( \left( V_{N,j}^{(t)}\right)^2 \mathds{1}_{\vert  V_{N,j}^{(t)} \vert > \varepsilon} \right) = 0.
\end{equation}
We have the bound
\[\vert V_{N,j}^{(t)} \vert \leq \sum_{k=1}^m \frac{\vert t_k \vert }{(\theta c_2^{(k)} \log N)^{1/2}} \left\vert W_j - \frac{\theta}{j} \right\vert \leq \frac{C}{(\log N)^{1/2}}  \left\vert W_j - \frac{\theta}{j} \right\vert\]
where $C:= \sum\limits_{k=1}^m \frac{\vert t_k \vert }{(\theta c_2^{(k)})^{1/2}}$, we deduce that for any $\varepsilon >0$, so that
\begin{equation}\label{eq:ineqLindeberg1}
\sum_{j=1}^N \mathbb{E} \left( \left( V_{N,j}^{(t)}\right)^2 \mathds{1}_{\vert V_{N,j}^{(t)}  \vert > \varepsilon } \right) \leq \frac{C^2}{\log N} \sum_{j=1}^N \mathbb{E} \left( \left( W_j-\frac{\theta}{j}\right)^2 \mathds{1}_{\left\vert W_j - \frac{\theta}{j} \right\vert > \frac{\varepsilon}{C}(\log N)^{1/2} } \right). 
\end{equation}
We could try a fourth moment bound (Lyapunov condition) but it is not sufficient since $\mathbb{E} \left( \left( W_j - \frac{\theta}{j} \right)^4 \right) = \mathcal{O}(\theta /j)$. Now, we observe that for all $j \geq \lceil \theta \rceil$, 
\begin{align*}
\mathbb{E} \left( \left( W_j-\frac{\theta}{j}\right)^2  \mathds{1}_{\left\vert W_j - \frac{\theta}{j} \right\vert > \frac{\varepsilon}{C}(\log N)^{1/2} } \right) &= \frac{\theta^2}{j^2} \mathds{1}_{\frac{\theta}{j}>\frac{\varepsilon}{C}(\log N)^{1/2} } \mathrm{e}^{-\theta/j} \\
	&\qquad + \left(1-\frac{\theta}{j}\right)^2 \mathds{1}_{\left\vert 1 - \frac{\theta}{j} \right\vert > \frac{\varepsilon}{C}(\log N)^{1/2} }  \frac{\theta}{j} \mathrm{e}^{-\theta/j} \\
	&\qquad + \sum_{k=2}^{+\infty} \left( k-\frac{\theta}{j}\right)^2  \mathds{1}_{\left\vert k - \frac{\theta}{j} \right\vert > \frac{\varepsilon}{C}(\log N)^{1/2} } \frac{\theta^k}{j^k k!} \mathrm{e}^{-\theta/j} \\
	&\leq \frac{\theta^2}{j^2} +  \frac{\theta}{j} \mathrm{e}^{-\theta/j} \mathds{1}_{1>\frac{\varepsilon}{C}(\log N)^{1/2} } + \sum_{k=2}^{+\infty} k^2 \frac{\theta^k}{j^k k!} \mathrm{e}^{-\theta/j} \\
	&\leq \frac{\theta^2}{j^2}  +  \mathds{1}_{1>\frac{\varepsilon}{C}(\log N)^{1/2} } +  \sum_{k=2}^{+\infty} 2k(k-1) \frac{\theta^k}{j^k k!} \mathrm{e}^{-\theta/j} \\
	&= \frac{3\theta^2}{j^2} + \mathds{1}_{1>\frac{\varepsilon}{C}(\log N)^{1/2} },
\end{align*}
hence \[\sum_{j=1}^N\mathbb{E} \left( \left( W_j-\frac{\theta}{j}\right)^2  \mathds{1}_{\left\vert W_j - \frac{\theta}{j} \right\vert > \frac{\varepsilon}{C}(\log N)^{1/2} } \right)  \leq N \mathds{1}_{1>\frac{\varepsilon}{C}(\log N)^{1/2} }  + \mathcal{O}_\theta (1) \]
which, jointly to the bound \eqref{eq:ineqLindeberg1} allows to conclude that \eqref{eq:condLindeberg} is verified. 
Consequently the Lindeberg-Feller theorem applies and gives \[ T_N^{(t)} \overset{\text{d}}{\longrightarrow} \mathcal{N}(0,\sigma^2 )  \]
where
\begin{align*}
\sigma^2 &:= \lim_{N\to +\infty } \sum_{j=1}^N \mathbb{E} \left(\left( V_{N,j}^{(t)}\right)^2\right) \\
	&= \lim_{N\to +\infty } \sum_{j=1}^N \sum_{k,l = 1}^m \frac{t_k t_l}{\theta (c_2^{(k)} c_2^{(l)})^{1/2} \log N}  \omega_{j,k} \omega_{j,l} \mathrm{Var} (W_j) \\
	&= \sum_{k,l = 1}^m \frac{t_k t_l}{(c_2^{(k)} c_2^{(l)})^{1/2}} \lim_{N\to +\infty } \frac{1}{\log N} \sum_{j=1}^N  \frac{\omega_{j,k} \omega_{j,l}}{j}. 
\end{align*}
Finally, since for all $k,l$, $(\omega_{j,k} \omega_{j,l})_{j\geq 1}$ is bounded and converges $(C,1)$ (Lemma~\ref{lem:Hcesaro}), then it follows from Proposition~\ref{prop:log} (taking $\theta=1$) that
\[ \sigma^2 = \lim_{N\to +\infty} \frac{1}{N} \sum_{j=1}^N \sum_{k,l=1}^m \frac{t_k t_l}{(c_2^{(k)} c_2^{(l)})^{1/2}} \omega_{j,k} \omega_{j,l}.\]
Slutsky's theorem ends the proof.
\end{proof}

\everymath{\displaystyle}
\begin{theorem}\label{th:TCLWreath}
Let $\widetilde{Y}_N^{I_k} := \frac{\widetilde{X}_N^{I_k} - \mathbb{E} ( \widetilde{X}_N^{I_k}) }{(\ell^{(k)} \theta \log N)^{1/2}}$, and let $\widetilde{Z}=(\widetilde{Z}_1, \cdots , \widetilde{Z}_m) \sim \mathcal{N}(0, \widetilde{D})$ where $\widetilde{D}$ is the covariance matrix defined for all $k,l$ by $\widetilde{D}_{k,l} := \frac{1}{\sqrt{\ell^{(k)} \ell^{(l)}}} \lim\limits_{N\to +\infty} \frac{1}{N} \sum\limits_{j=1}^N H_{j,k,l}$. Then
\[(\widetilde{Y}_N^{I_1} , \cdots , \widetilde{Y}_N^{I_m}) \overset{\text{d}}{\longrightarrow} (\widetilde{Z}_1, \cdots , \widetilde{Z}_m).\]
\end{theorem}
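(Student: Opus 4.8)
The argument runs parallel to that of Theorem~\ref{th:TCLfix}, the only new feature being that the spectrum of $\widetilde{M}_N$ carries, besides the cycle counts, the independent uniform shifts $(\phi_{j,p})$. By the Cramér--Wold device it suffices to fix $t=(t_1,\dots,t_m)\in\mathbb{R}^m$ and prove $\sum_{k=1}^m t_k\widetilde{Y}_N^{I_k}\overset{\text{d}}{\longrightarrow}\sum_{k=1}^m t_k\widetilde{Z}_k$. Set $b_{j,p}^{(k)}:=\{j\alpha_k\}-\{j\beta_k\}+\mathds{1}_{\phi_{j,p}>\{j\alpha_k\}}-\mathds{1}_{\phi_{j,p}>\{j\beta_k\}}$ and $c_k:=t_k(\ell^{(k)}\theta\log N)^{-1/2}$; the expansion of $\widetilde{X}_N^{I_k}$ established above, together with $\mathbb{E}(\widetilde{X}_N^{I_k})=N(\beta_k-\alpha_k)$, gives
\[\sum_{k=1}^m t_k\widetilde{Y}_N^{I_k}=\sum_{j=1}^N\sum_{p=1}^{a_{N,j}}\sum_{k=1}^m c_k\,b_{j,p}^{(k)}.\]
The plan is: (i) replace $a_{N,j}$ by the independent Poisson$(\theta/j)$ variables $W_j$ of the Feller coupling (Lemma~\ref{lem:majPoisson}); (ii) apply the Lindeberg--Feller theorem to the resulting sum; (iii) conclude with Slutsky's theorem. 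For (i), since $|b_{j,p}^{(k)}|\le1$, the difference between $\sum_k t_k\widetilde{Y}_N^{I_k}$ and $T_N^{(t)}:=\sum_{j=1}^N\sum_{p=1}^{W_j}\sum_{k=1}^m c_k\,b_{j,p}^{(k)}$ is bounded in absolute value by $\big(\sum_k|c_k|\big)\sum_{j=1}^N|a_{N,j}-W_j|$, whose expectation is $\mathcal{O}_\theta\big((\log N)^{-1/2}\big)$ by Lemma~\ref{lem:majPoisson}; Markov's inequality then shows this difference tends to $0$ in probability. (In contrast with Theorem~\ref{th:TCLfix}, no correction involving $\Psi_N(j)$ is needed, because $\mathbb{E}(\widetilde{X}_N^{I_k})$ has no logarithmic term.)

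For step (ii), write $T_N^{(t)}=\sum_{j=1}^N U_{N,j}$ with $U_{N,j}:=\sum_{p=1}^{W_j}\sum_{k=1}^m c_k\,b_{j,p}^{(k)}$. Taking the $(\phi_{j,p})$ independent of $\big((\sigma_N),(W_j)\big)$, the $U_{N,j}$ form a triangular array with independent rows, and $\mathbb{E}(U_{N,j})=0$ since each $b_{j,p}^{(k)}$ is centred and independent of $W_j$ (Wald). Conditioning on $W_j$ and using independence of the $b_{j,p}^{(k)}$ across $p$ yields
\[\mathbb{E}\big(U_{N,j}^2\big)=\mathbb{E}(W_j)\sum_{k,l=1}^m c_k c_l\,\mathbb{E}\big(b_{j,1}^{(k)}b_{j,1}^{(l)}\big)=\frac{\theta}{j}\sum_{k,l=1}^m c_k c_l\,H_{j,k,l},\]
where $\mathbb{E}\big(b_{j,1}^{(k)}b_{j,1}^{(l)}\big)=H_{j,k,l}$ follows from $b_{j,1}^{(k)}=f(\{j\alpha_k\})-f(\{j\beta_k\})$ with $f(x):=\mathds{1}_{\phi_{j,1}>x}-(1-x)$, from $\mathbb{E}(f(x)f(y))=\min(x,y)-xy$, and from $\min(x,y)=\tfrac12(x+y-|x-y|)$ --- this is exactly why $H_{j,k,l}$ is defined as it is. Hence
\[\sum_{j=1}^N\mathbb{E}\big(U_{N,j}^2\big)=\sum_{k,l=1}^m\frac{t_k t_l}{\sqrt{\ell^{(k)}\ell^{(l)}}}\cdot\frac{1}{\log N}\sum_{j=1}^N\frac{H_{j,k,l}}{j}.\]
By Lemma~\ref{lem:Hcesaro} each $(H_{j,k,l})_{j\ge1}$ is bounded and converges $(C,1)$, so, as at the end of the proof of Theorem~\ref{th:TCLfix} (Proposition~\ref{prop:log} with $\theta=1$), $\frac{1}{\log N}\sum_{j=1}^N\frac{H_{j,k,l}}{j}\to\lim_N\frac1N\sum_{j=1}^N H_{j,k,l}$, and therefore $\sum_{j=1}^N\mathbb{E}(U_{N,j}^2)\to\sigma^2:=\sum_{k,l}t_k t_l\widetilde{D}_{k,l}=\mathrm{Var}\big(\sum_k t_k\widetilde{Z}_k\big)$.

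Finally, for the Lindeberg condition $\sum_{j=1}^N\mathbb{E}\big(U_{N,j}^2\mathds{1}_{|U_{N,j}|>\varepsilon}\big)\to0$, the crude bound $|U_{N,j}|\le C\,W_j(\theta\log N)^{-1/2}$ with $C:=\sum_k|t_k|(\ell^{(k)})^{-1/2}$ reduces matters to showing $\frac{1}{\log N}\sum_{j=1}^N\mathbb{E}\big(W_j^2\mathds{1}_{W_j>\varepsilon C^{-1}(\theta\log N)^{1/2}}\big)\to0$. I expect this to be the main technical point. It is handled, as in Theorem~\ref{th:TCLfix}, by splitting according to $j$: for $j\ge\lceil\theta\rceil$ one has $\theta/j\le1$ and $\mathbb{E}\big(W_j^2\mathds{1}_{W_j\ge2}\big)=\mathcal{O}\big((\theta/j)^2\big)$ (dominated by the $r=2$ term of the Poisson series up to an absolute constant), so this part of the sum is $\mathcal{O}_\theta(1)$; the contribution of $W_j=1$ carries the indicator $\mathds{1}_{1>\varepsilon C^{-1}(\theta\log N)^{1/2}}$, which vanishes for $N$ large; and the finitely many terms $j<\lceil\theta\rceil$ each tend to $0$ since $W_j$ has a finite second moment while the truncation level diverges. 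Dividing by $\log N$ gives the condition. The Lindeberg--Feller theorem then yields $T_N^{(t)}\overset{\text{d}}{\longrightarrow}\mathcal{N}(0,\sigma^2)$, and combining this with step (i) via Slutsky's theorem completes the proof.
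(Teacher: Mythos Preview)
Your proof is correct and follows essentially the same route as the paper's own argument: Cram\'er--Wold, Feller-coupling replacement of $a_{N,j}$ by $W_j$ with the $L^1$ bound from Lemma~\ref{lem:majPoisson}, Lindeberg--Feller applied to the row-independent array $U_{N,j}=V_{N,j}^{(t)}$ via the crude bound $|U_{N,j}|\le C W_j(\log N)^{-1/2}$, and identification of the limiting variance through Lemma~\ref{lem:Hcesaro} and Proposition~\ref{prop:log}. Your derivation of $\mathbb{E}\big(b_{j,1}^{(k)}b_{j,1}^{(l)}\big)=H_{j,k,l}$ via $f(x)=\mathds{1}_{\phi>x}-(1-x)$ and $\mathbb{E}(f(x)f(y))=\min(x,y)-xy$ is slightly more conceptual than the paper's direct computation, but the substance is identical.
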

\everymath{}
\begin{proof}
Let $t:=(t_1, \cdots , t_m) \in \mathbb{R}^m$. Again, it suffices to show that $t_1 \widetilde{Y}_N^{I_1} + \cdots + t_m \widetilde{Y}_N^{I_m} \overset{\text{d}}{\longrightarrow} t_1 \widetilde{Z}_1+ \cdots + t_m \widetilde{Z}_m$. In order to shorten the following expressions, let us define for $j,p \in \mathbb{N}^*$ and $1\leq k \leq m$,
\[b_{j,p,k} := \mathds{1}_{\phi_{j,p} > \{j\alpha_k \}} - \mathds{1}_{\phi_{j,p} > \{j\beta_k \}} - \omega_{j,k}\]
and \[B_{j,k}:=\sum_{p=1}^{W_j} b_{j,p,k}.\]
Let \[V_{N,j}^{(t)} := \sum_{k=1}^m \frac{t_k}{(\theta \ell^{(k)} \log N)^{1/2}} B_{j,k} .\]
Since the $W_j$ and the $\phi_{j,p}$ are independent, then $(V_{N,j}^{(t)})_{N\geq 1, 1\leq j \leq N }$ is a triangular array of independent random variables.
Let $T_N^{(t)}:= \sum_{j=1}^N V_{N,j}^{(t)}$. Then
\begin{align*}
t_1 \widetilde{Y}_N^{I_1} + \cdots + t_m \widetilde{Y}_N^{I_m} - T_N^{(t)} &= \sum_{k=1}^m \frac{t_k}{(\theta \ell^{(k)} \log N)^{1/2}} \left[ \widetilde{X}_N^{I_k} - \mathbb{E} (\widetilde{X}_N^{I_k}) - \sum_{j=1}^N B_{j,k} \right] \\
	&=  \sum_{k=1}^m \frac{t_k}{(\theta \ell^{(k)} \log N)^{1/2}} \sum_{j=1}^N \left[ \sum_{p=1}^{a_{N,j}} b_{j,p,k} - B_{j,k} \right].
\end{align*}
This quantity converges in probability to $0$. Indeed, let $\varepsilon >0$. Using Markov's inequality,
\begin{align*}
\mathbb{P} ( \vert t_1 \widetilde{Y}_N^{I_1} + \cdots + t_m \widetilde{Y}_N^{I_m} - T_N^{(t)} \vert > \varepsilon ) &\leq \frac{1}{\varepsilon} \mathbb{E}  ( \vert t_1 \widetilde{Y}_N^{I_1} + \cdots + t_m \widetilde{Y}_N^{I_m} - T_N^{(t)} \vert)  \\
	&\leq \frac{1}{ \sqrt{ \log N }} \sum_{k=1}^m \frac{\vert t_k \vert }{\varepsilon (\theta \ell^{(k)} )^{1/2}} \mathbb{E} \left( \sum_{j=1}^N \left\vert \sum_{p=1}^{a_{N,j}} b_{j,p,k} - B_{j,k} \right\vert \right)
\end{align*}
with \begin{align*}
\mathbb{E} \left( \sum_{j=1}^N \left\vert \sum_{p=1}^{a_{N,j}} b_{j,p,k} - B_{j,k} \right\vert \right) &\leq \mathbb{E} \left( \sum_{j=1}^N  \sum_{p=1}^{\vert a_{N,j} - W_j \vert} \vert b_{j,p,k} \vert \right) \\
	&\leq \mathbb{E} \left( \sum_{j=1}^N \vert a_{N,j} - W_j \vert \right) \\
	&= \mathcal{O}_\theta (1)
\end{align*}
applying Lemma~\ref{lem:majPoisson}.
Hence \[\mathbb{P} ( \vert t_1 \widetilde{Y}_N^{I_1} + \cdots + t_m \widetilde{Y}_N^{I_m} - T_N^{(t)} \vert > \varepsilon )  \underset{N\to \infty}{\longrightarrow}  0.\]
Now, we want to show that $T_N^{(t)}$ is asymptotically normal. For this purpose, we check the condition of Lindeberg-Feller on $(V_{N,j}^{(t)})$. Noticing that 
\[\vert V_{N,j}^{(t)} \vert \leq \sum_{k=1}^m \frac{\vert t_k \vert }{(\theta \ell^{(k)} \log N)^{1/2}} \vert B_{j,k} \vert \leq \frac{C}{(\log N)^{1/2}}  W_j\]
where $C:= \sum\limits_{k=1}^m \frac{\vert t_k \vert }{(\theta \ell^{(k)})^{1/2}}$, we deduce that for any $\varepsilon >0$,
\begin{equation}\label{eq:ineqLindeberg2}
\sum_{j=1}^N \mathbb{E} \left( \left( V_{N,j}^{(t)}\right)^2 \mathds{1}_{\vert V_{N,j}^{(t)}  \vert > \varepsilon } \right) \leq \frac{C^2}{\log N} \sum_{j=1}^N \mathbb{E} \left( W_j^2 \mathds{1}_{W_j > \frac{\varepsilon}{C}(\log N)^{1/2} } \right).
\end{equation}
From this, we observe that for all $j$, 
\begin{align*}
\mathbb{E} \left( W_j^2 \mathds{1}_{W_j > \frac{\varepsilon}{C}(\log N)^{1/2} } \right) &= \sum_{k=1}^{+\infty} k^2 \mathds{1}_{k>\frac{\varepsilon}{C}(\log N)^{1/2} } \mathbb{P} (W_j=k) \\
	&= \frac{\theta}{j} \mathrm{e}^{-\theta/j} \mathds{1}_{1>\frac{\varepsilon}{C}(\log N)^{1/2} } + \mathcal{O}_\theta \left( \frac{1}{j^2} \right)
\end{align*}
hence \[\sum_{j=1}^N \mathbb{E} \left( W_j^2 \mathds{1}_{W_j > \frac{\varepsilon}{C}(\log N)^{1/2} } \right) \leq \theta \left( \sum_{j=1}^N \frac{1}{j} \right)   \mathds{1}_{1>\frac{\varepsilon}{C}(\log N)^{1/2} }  + \mathcal{O}_\theta (1) \]
which, jointly to the bound \eqref{eq:ineqLindeberg2} allows to conclude that \eqref{eq:condLindeberg} is verified. 
Consequently the Lindeberg-Feller theorem applies and gives 
\[ T_N^{(t)} \overset{\text{d}}{\longrightarrow} \mathcal{N}(0,\sigma^2 )  \]
where
\begin{align*}
\sigma^2 &:= \lim_{N\to +\infty } \sum_{j=1}^N \mathbb{E} \left(\left( V_{N,j}^{(t)}\right)^2\right) \\
	&= \lim_{N\to +\infty } \sum_{j=1}^N \sum_{k,l = 1}^m \frac{t_k t_l}{\theta (\ell^{(k)} \ell^{(l)})^{1/2} \log N} \mathbb{E} (B_{j,k} B_{j,l})
\end{align*}
with for all $j,k,l$,
\begin{align*}
\mathbb{E} (B_{j,k} B_{j,l}) &= \mathbb{E} \left( \sum_{p,q=1}^{W_j} b_{j,p,k} b_{j,q,l} \right) \\
	&= \sum_{p=1}^{+\infty} \mathbb{E} \left(  b_{j,p,k} b_{j,p,l} \right) \mathbb{P} (W_j \geq p) + 2 \sum_{\substack{p,q=1 \\ p<q}}^{+\infty} \mathbb{E} ( b_{j,p,k} )  \mathbb{E} ( b_{j,q,l} ) \mathbb{P} (W_j\geq q) \\
	&=\sum_{p=1}^{+\infty} [ \mathbb{E} ( (\mathds{1}_{\phi_{j,p} > \{j\alpha_k \}} - \mathds{1}_{\phi_{j,p} > \{j\beta_k \}}) (\mathds{1}_{\phi_{j,p} > \{j\alpha_l \}} - \mathds{1}_{\phi_{j,p} > \{j\beta_l \}}) ) \\
	&\qquad \qquad - \omega_{j,k} \omega_{j,l} ] \mathbb{P} (W_j \geq p) \\
	&= \mathbb{E} (W_j) \left[ \int_0^1 (\mathds{1}_{x > \{j\alpha_k \}} - \mathds{1}_{x > \{j\beta_k \}}) (\mathds{1}_{x > \{j\alpha_l \}} - \mathds{1}_{x > \{j\beta_l \}}) \mathrm{d}x  - \omega_{j,k} \omega_{j,l} \right] \\
	&=\frac{\theta}{j} \left[ \frac{1}{2}  ( \vert \{j\beta_k \} - \{ j\alpha_l \} \vert + \vert \{j\alpha_k \} - \{ j\beta_l \} \vert   \right. \\
	&\qquad \qquad \left.  - \vert \{j\alpha_k \} - \{ j\alpha_l \} \vert   - \vert \{j\beta_k \} - \{ j\beta_l \} \vert    ) -  \omega_{j,k} \omega_{j,l} \right] \\
	&= \frac{\theta}{j} H_{j,k,l}.
\end{align*}
Since for all $k,l$, the sequence $( H_{j,k,l} )_{j\geq 1}$ is bounded (by $1$) and converges $(C,1)$ to a finite limit (Lemma~\ref{lem:Hcesaro}), then it follows from Proposition~\ref{prop:log} (taking $\theta=1$) that
\[ \sigma^2 = \lim_{N\to +\infty} \frac{1}{N} \sum_{j=1}^N \sum_{k,l=1}^m \frac{t_k t_l}{(\ell^{(k)}\ell^{(l)})^{1/2}} H_{j,k,l}.\]
Slutsky's theorem ends the proof.
\end{proof}

Now, we can give a significant particular case of the two previous theorems: 
\begin{corollary}
With the same notation, let assume the $\alpha_k$ and the $\beta_l$, $1\leq k,l \leq m$, to be irrational numbers which are linearly independent over $\mathbb{Q}$. Then $D$ and $\widetilde{D}$ are the identity matrix.
\end{corollary}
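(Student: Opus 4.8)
The plan is to evaluate, one by one, all the Cesàro limits appearing in the definitions of $D$ and $\widetilde{D}$, using Weyl's equidistribution theorem. Throughout, I take the assumption (as in the paragraph preceding Theorem~\ref{th:1}) to mean that the family $(1,\alpha_1,\dots,\alpha_m,\beta_1,\dots,\beta_m)$ is linearly independent over $\mathbb{Q}$; consequently so is every sub-family, and each $\beta_k-\alpha_k$ is irrational (a relation $q(\beta_k-\alpha_k)=p$ with $p\in\mathbb{Z}$, $q\in\mathbb{N}^{*}$ would be a nontrivial rational dependence among $1,\alpha_k,\beta_k$). The only input beyond results already in the paper is the classical fact that when $(1,x_1,\dots,x_r)$ is linearly independent over $\mathbb{Q}$, the sequence $\big((\{jx_1\},\dots,\{jx_r\})\big)_{j\ge 1}$ is equidistributed on $[0,1]^{r}$, so that $\frac1N\sum_{j=1}^{N}f(\{jx_1\},\dots,\{jx_r\})\to\int_{[0,1]^{r}}f$ for every continuous $f$; this simultaneously guarantees the existence of the limits (already known from Lemma~\ref{lem:Hcesaro}) and identifies their values.

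First I would settle the diagonal entries. Since $\beta_k-\alpha_k$ is irrational, the value $c_2^{(k)}=\frac16$ is given by the example following Proposition~\ref{prop:EspVarX}, and $\ell^{(k)}=\frac16$ by the proposition establishing \eqref{eq:asymptVarXtilde}. Applying the equidistribution principle to $(1,\alpha_k,\beta_k)$ with $f(x,y)=(y-x)^{2}$ yields
\[\lim_{N\to\infty}\frac1N\sum_{j=1}^{N}\omega_{j,k}^{2}=\int_0^1\!\int_0^1(y-x)^{2}\,dx\,dy=\frac16,\]
hence $D_{k,k}=\frac{1}{c_2^{(k)}}\cdot\frac16=1$. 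Moreover the two diagonal absolute values in $H_{j,k,k}$ vanish, so $H_{j,k,k}=\big|\{j\beta_k\}-\{j\alpha_k\}\big|-\big(\{j\beta_k\}-\{j\alpha_k\}\big)^{2}$, and the same principle with $f(x,y)=|y-x|-(y-x)^{2}$ gives
\[\lim_{N\to\infty}\frac1N\sum_{j=1}^{N}H_{j,k,k}=\int_0^1\!\int_0^1\big(|y-x|-(y-x)^{2}\big)\,dx\,dy=\frac13-\frac16=\frac16,\]
so that $\widetilde{D}_{k,k}=\frac{1}{\ell^{(k)}}\cdot\frac16=1$.

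Next I would treat the off-diagonal entries, fixing $k\neq l$. By linear independence of $(1,\alpha_k,\beta_k,\alpha_l,\beta_l)$, the sequence $\big((\{j\alpha_k\},\{j\beta_k\},\{j\alpha_l\},\{j\beta_l\})\big)_{j\ge 1}$ is equidistributed on $[0,1]^{4}$. For $D$ this gives
\[\lim_{N\to\infty}\frac1N\sum_{j=1}^{N}\omega_{j,k}\omega_{j,l}=\left(\int_0^1\!\int_0^1(y_2-y_1)\,dy_1\,dy_2\right)\left(\int_0^1\!\int_0^1(y_4-y_3)\,dy_3\,dy_4\right)=0,\]
each factor being $\frac12-\frac12=0$; hence $D_{k,l}=0$. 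For $\widetilde{D}$, the integrand defining $H_{j,k,l}$ is continuous, its $\omega_{j,k}\omega_{j,l}$ part has Cesàro limit $0$ as just computed, and each of its four absolute-value terms pairs two \emph{distinct} frequencies among $\alpha_k,\beta_k,\alpha_l,\beta_l$, hence has Cesàro limit $\int_0^1\!\int_0^1|x-y|\,dx\,dy=\frac13$; therefore
\[\lim_{N\to\infty}\frac1N\sum_{j=1}^{N}H_{j,k,l}=\frac12\left(\frac13+\frac13-\frac13-\frac13\right)-0=0,\]
so $\widetilde{D}_{k,l}=0$. Gathering the diagonal and off-diagonal computations, $D$ and $\widetilde{D}$ are both the $m\times m$ identity matrix.

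The only step needing a little care is the passage from Cesàro averages to integrals, which is exactly Weyl's theorem: its hypothesis is ensured by the assumed $\mathbb{Q}$-linear independence of $1$ together with the frequencies involved, and its applicability is immediate here since all the relevant integrands are continuous (in fact piecewise polynomial) on the torus. No genuine obstacle is expected; the remainder is the evaluation of a handful of elementary integrals.
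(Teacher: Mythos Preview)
Your proof is correct and follows essentially the same approach as the paper: both compute the Ces\`aro limits defining $D_{k,l}$ and $\widetilde{D}_{k,l}$ via equidistribution, reducing them to the same elementary integrals $\int_0^1\int_0^1(x-y)^2\,dx\,dy=\tfrac16$ and $\int_0^1\int_0^1|x-y|\,dx\,dy=\tfrac13$. The only cosmetic difference is that for the off-diagonal $H_{j,k,l}$ you use four-dimensional equidistribution on the original absolute-value expression, whereas the paper (implicitly via Lemma~\ref{lem:Hcesaro}) rewrites $H_{j,k,l}$ in terms of the one-variable functions $h_j(x)=\{jx\}(1-\{jx\})$ and uses one-dimensional equidistribution of each difference $\alpha_k-\beta_l$, etc.; both routes yield $0$ immediately.
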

Indeed, this corollary is a significant version since the additional condition is almost surely satisfied if the endpoints of the intervals are uniformly sampled on the unit circle.

\begin{proof}
Let $1\leq k,l \leq m$. Under the assumption, it suffices to notice that
\begin{itemize}
	\item If $k\neq l$, 
\[\lim_{N \to \infty } \frac{1}{N} \sum_{j=1}^N \omega_{j,k} \omega_{j,l} = \int_0^1  \int_0^1  \int_0^1  \int_0^1  (x-y) (z-t) \mathrm{d}x \mathrm{d}y \mathrm{d}z \mathrm{d}t = 0, \]
and
\[\lim_{N \to \infty } \frac{1}{N} \sum_{j=1}^N H_{j,k,l} = 0\]
thus $D_{k,l}=0$ and $\widetilde{D}_{k,l}=0$.
	\item If $k= l$, 
\[\lim_{N \to \infty } \frac{1}{N} \sum_{j=1}^N H_{j,k,l} + \omega_{j,k} \omega_{j,l}= \lim_{N\to \infty} \frac{1}{N} \sum_{j=1}^N \vert \omega_{j,k} \vert = \int_0^1 \int_0^1 \vert x-y \vert \mathrm{d}x \mathrm{d}y = \frac{1}{3}\]
and 
\[\lim_{N \to \infty } \frac{1}{N} \sum_{j=1}^N \omega_{j,k}^2 = \int_0^1 \int_0^1 (x-y)^2 \mathrm{d}x \mathrm{d}y = \frac{1}{6},\]
thus $D_{k,l} = 6 \times \frac{1}{6} = 1$ and $\widetilde{D}_{k,l}=6\left( \frac{1}{3} - \frac{1}{6}\right)=1$.
\end{itemize}
\end{proof}

\section{Proof of Theorem 2}
\label{sec:4} 

\subsection{Variance}

Here, we assume that the interval $I$ shrinks as $N$ tends to infinity. For $N\geq 1$ we define $I_N:= \left(\mathrm{e}^{2i\pi \alpha_N} , \mathrm{e}^{2i\pi \beta_N} \right]$, and $\delta_N :=\beta_N - \alpha_N \in (0,1]$.

\begin{proposition}
Suppose that the sequence $(\delta_N)$ satisfies $\left\{\begin{array}{l} 
\delta_N \underset{N\to \infty}{\longrightarrow} 0 \\
N \delta_N \underset{N\to \infty}{\longrightarrow} + \infty .
\end{array} \right.$
Then
\begin{equation}\label{eq:asymptVarXtildeMeso}
\mathrm{Var} (\widetilde{X}_N^{I_N}) \underset{N\to\infty}{\sim}  \frac{\theta}{6} \log(N\delta_N).
\end{equation}
\end{proposition}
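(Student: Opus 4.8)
The plan is to work from the closed form \eqref{eq:varXtilde}, $\mathrm{Var}(\widetilde{X}_N^{I_N}) = \theta\sum_{j=1}^N\frac{\Psi_N(j)}{j}\,w_j^{(N)}$ with $w_j^{(N)} := \{j\delta_N\}(1-\{j\delta_N\})$, and to first strip off the factor $\Psi_N(j)$. Since $0\le w_j^{(N)}\le 1$, the difference between this sum and $S_N := \sum_{j=1}^N\frac{w_j^{(N)}}{j}$ is at most $\theta\sum_{j=1}^N\frac{|\Psi_N(j)-1|}{j}$, which — exactly as in the proof of Theorem~\ref{th:TCLfix}, using \eqref{eq:sumPsij} and the fact that $\Psi_N(j)-1$ has constant sign — equals $\theta|1-\theta|\sum_{j=1}^N\frac{1}{j(\theta+j-1)} = \mathcal{O}_\theta(1)$. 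So it suffices to prove $S_N \sim \frac{1}{6}\log(N\delta_N)$; the variance asymptotics then follows, as the right-hand side tends to infinity.

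For $S_N$ I would use the uniformly convergent Fourier expansion $\{x\}(1-\{x\}) = \frac16 - \frac{1}{\pi^2}\sum_{k\ge1}\frac{\cos(2\pi kx)}{k^2}$, interchange the two (absolutely convergent) sums, and write
\[ S_N = \frac{1}{6}H_N - \frac{1}{\pi^2}\sum_{k\ge 1}\frac{1}{k^2}\,C_N(k), \qquad C_N(k) := \sum_{j=1}^N\frac{\cos(2\pi k j\delta_N)}{j}, \]
where $H_N = \log N + \mathcal{O}(1)$. Writing $\|y\|$ for the distance from $y$ to the nearest integer, the classical identity $\sum_{j\ge1}\frac{\cos(j\phi)}{j} = -\log(2\sin(\phi/2))$ on $(0,2\pi)$ together with the Abel-summation tail bound $\big|\sum_{j>N}\frac{e^{ij\phi}}{j}\big| \ll \frac{1}{N\,|1-e^{i\phi}|}$ give, for $\|k\delta_N\|\neq 0$,
\[ C_N(k) = -\log\!\bigl(2\sin(\pi\|k\delta_N\|)\bigr) + \mathcal{O}\!\Bigl(\tfrac{1}{N\|k\delta_N\|}\Bigr), \]
whereas $C_N(k) = H_N$ when $k\delta_N\in\mathbb{Z}$. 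The target reduces to showing $\frac{1}{\pi^2}\sum_{k\ge1}\frac{1}{k^2}C_N(k) = \frac16\log\frac{1}{\delta_N} + o(\log(N\delta_N))$, so that the two genuine logarithms combine into $\frac16(\log N - \log\frac1{\delta_N}) = \frac16\log(N\delta_N)$.

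The summation over $k$ is then split at $k\simeq\frac{1}{2\delta_N}$. For $k\le\frac{1}{2\delta_N}$ one has $\|k\delta_N\| = k\delta_N$ and $C_N(k) = \log\frac{1}{k\delta_N} + \mathcal{O}(1) + \mathcal{O}\bigl(\frac{1}{Nk\delta_N}\bigr)$; after weighting by $k^{-2}$ the error terms contribute $\mathcal{O}(1) + \mathcal{O}\bigl(\frac{1}{N\delta_N}\bigr)$, while separating $\log\frac{1}{k\delta_N} = \log\frac{1}{\delta_N} - \log k$ and using $\sum_{k\ge1}k^{-2} = \frac{\pi^2}{6}$, $\sum_{k\ge1}k^{-2}\log k <\infty$ yields $\frac{1}{\pi^2}\sum_{k\le 1/(2\delta_N)}\frac{1}{k^2}\log\frac{1}{k\delta_N} = \frac16\log\frac{1}{\delta_N} + \mathcal{O}(1)$. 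For $k>\frac{1}{2\delta_N}$ one shows the contribution is $o(\log(N\delta_N))$ by treating three ranges: (i) $k\delta_N\in\mathbb{Z}$, which occurs only if $\delta_N = p_N/q_N$ is rational, in which case $q_N\ge 1/\delta_N$, so the relevant terms sum to at most $\frac{\pi^2}{6}\,\frac{H_N}{q_N^2}$; (ii) $0<\|k\delta_N\|<\frac1N$, bounded crudely via $|C_N(k)|\le H_N$ together with the elementary counting estimate $\sum_{k>1/(2\delta_N),\ \|k\delta_N\|<t}k^{-2}\ll t\delta_N + \delta_N^2$ (obtained by covering these $k$ by the intervals $\bigl(\frac{n-t}{\delta_N},\frac{n+t}{\delta_N}\bigr)$, $n\ge1$); (iii) $\|k\delta_N\|\ge\frac1N$, where $|C_N(k)| \ll 1+\log\frac{1}{\|k\delta_N\|} \ll \log N$ and $\sum_{k>1/(2\delta_N)}k^{-2} = \mathcal{O}(\delta_N)$, giving $\mathcal{O}(\delta_N\log N)$. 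Collecting everything gives $\mathrm{Var}(\widetilde{X}_N^{I_N}) = \theta S_N + \mathcal{O}_\theta(1) \sim \frac{\theta}{6}\log(N\delta_N)$.

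The hard part is the analysis of these resonant and near-resonant frequencies, and it is precisely there that the two standing hypotheses are used: one must verify the elementary but not-entirely-obvious facts that $\delta_N\to 0$ and $N\delta_N\to\infty$ force both $\delta_N\log N = o(\log(N\delta_N))$ (handling ranges (ii) and (iii)) and, writing $\delta_N = p_N/q_N$ in lowest terms so that necessarily $q_N\ge1/\delta_N\to\infty$, $\frac{\log N}{q_N^2} = o(\log(N\delta_N))$ (handling range (i)) — for instance by splitting according to whether $q_N\le\sqrt{N}$, where $\log(N\delta_N)\ge\frac12\log N$, or $q_N>\sqrt{N}$, where $\frac{\log N}{q_N^2}\to 0$ while $\log(N\delta_N)\to\infty$ by hypothesis. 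This is exactly the mechanism by which the arithmetic corrections that survive for fixed rational endpoints (compare the coefficient $c_2$ in Proposition~\ref{prop:EspVarX}) get washed out once the arc shrinks, but not too fast.
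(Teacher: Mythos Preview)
Your argument is correct, but the route is genuinely different from the paper's. After the common first step of stripping $\Psi_N(j)$ via $\sum_j \frac{|\Psi_N(j)-1|}{j}=\mathcal O_\theta(1)$, the paper proceeds by direct real-variable analysis: it splits the sum $\sum_{j=1}^N \frac{\chi_N(j)}{j}$ at $j=\lceil 1/\delta_N\rceil$, computes the small-$j$ part as a Riemann sum (limit $\tfrac12$), and for the large-$j$ part applies Abel summation and groups the inner partial sums into blocks of length $\simeq 1/\delta_N$, showing that each block is close to the Riemann sum $\sum_q q\delta_N(1-q\delta_N)\,\delta_N \to \int_0^1 x(1-x)\,\mathrm dx = \tfrac16$, whence the factor $\log(N\delta_N)$ appears from the remaining harmonic tail. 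No case distinction on $\delta_N$ rational/irrational is ever made. By contrast, your approach is spectral: you use the Fourier expansion of $\{x\}(1-\{x\})$ and reduce the problem to controlling the Clausen-type sums $C_N(k)$, splitting in the \emph{frequency} variable at $k\simeq 1/(2\delta_N)$ and treating resonant/near-resonant frequencies separately.

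What each buys: the paper's argument is entirely self-contained and elementary, and is insensitive to the arithmetic nature of $\delta_N$. Your argument is shorter once the standard ingredients ($B_2$ Fourier series, $\sum_j \frac{\cos j\phi}{j}=-\log|2\sin(\phi/2)|$, Abel tail bound) are granted, makes the constant $\tfrac16=\zeta(2)/\pi^2$ conceptually transparent, and would adapt readily to other smooth periodic weights. The price you pay is the explicit bookkeeping of ranges (i)--(iii) and the two auxiliary estimates $\delta_N\log N = o(\log(N\delta_N))$ and $\tfrac{\log N}{q_N^2}=o(\log(N\delta_N))$; your verification of these (including the $q_N\le\sqrt N$ versus $q_N>\sqrt N$ dichotomy and the counting bound $\sum_{k>1/(2\delta_N),\ \|k\delta_N\|<t}k^{-2}\ll t\delta_N+\delta_N^2$) is correct. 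The closing remark linking the washing-out of arithmetic corrections to the disappearance of the resonant contribution is a nice conceptual bonus absent from the paper's proof.
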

\begin{proof}
First, with respect to what we have stated before, 
\[\mathrm{Var} (\widetilde{X}_N^{I_N}) =  \theta \sum_{j=1}^N \frac{\Psi_N (j) \chi_N (j)}{j}  \]
where $\chi_N (j) := \left\{j \delta_N \right\} \left(1 - \left\{j \delta_N \right\} \right)$.
We are going to study the particular case $\theta = 1$, then we will see that the general case $\theta>0$ can be quickly deduced from it.

\subsubsection*{\underline{Particular case $\theta =1$}:}

Since the function $t\mapsto \{ t \}$ is of period $1$, then $t \mapsto \chi_N (t)$ is periodic of period $1/\delta_N$. Thus, an idea may be to group the terms in the sum according to $1/\delta_N$. We begin with separating the points of the first period from those of the others:
\[\mathrm{Var} (\widetilde{X}_N^{I_N ,\theta=1}) = \sum_{j=1}^N \frac{\chi_N (j)}{j} = \sum_{j=1}^{\lceil 1/\delta_N \rceil -1} \frac{\chi_N (j)}{j} + \sum_{j=\lceil 1/\delta_N \rceil}^N \frac{\chi_N (j)}{j} \]
where 
\begin{align}\label{eq:varcase1part1}
\begin{split}
\sum\limits_{j=1}^{\lceil 1/\delta_N \rceil -1} \frac{\chi_N (j)}{j} &= \sum\limits_{j=1}^{\lceil 1/\delta_N \rceil -1} \frac{1}{j} j\delta_N \left(1-j\delta_N \right)  \\
	&= \delta_N \sum\limits_{j=1}^{\lceil 1/\delta_N \rceil -1}  \left(1-j\delta_N \right) \underset{N\to \infty}{\longrightarrow} \int_0^1 (1-x)\mathrm{d}x = \frac{1}{2}.
\end{split}
\end{align}
We apply a summation by parts on the second sum (Abel transformation):
\begin{align*}
\sum_{j=\lceil 1/\delta_N \rceil}^N \frac{\chi_N (j)}{j} &= \frac{1}{N} \sum_{j=\lceil 1/\delta_N \rceil}^N \chi_N (j) - \sum_{j=\lceil 1/\delta_N \rceil}^{N-1} \left( \frac{1}{j+1} - \frac{1}{j}  \right) \sum_{k=\lceil 1/\delta_N \rceil}^j \chi_N (k) \\
	&=\frac{1}{N} \sum_{j=\lceil 1/\delta_N \rceil}^N \chi_N (j) + \sum_{j=\lceil 1/\delta_N \rceil}^{N-1}  \frac{1}{j(j+1)} \sum_{k=\lceil 1/\delta_N \rceil}^j \chi_N (k)
\end{align*}
Let $j \geq \lceil 1/\delta_N \rceil $, and define 
\[A_j:= \frac{1}{j} \sum_{p=1}^{\left\lfloor j\delta_N \right\rfloor -1} \sum_{k=\lceil p/\delta_N \rceil}^{\lceil (p+1)/\delta_N\rceil -1} \chi_N (k) , \qquad B_j:= \frac{1}{j} \sum_{k=\left\lceil \left\lfloor j\delta_N \right\rfloor /\delta_N \right\rceil}^{j} \chi_N (k)   \]
in such a way that $ A_j+B_j = \frac{1}{j}  \sum\limits_{k=\lceil 1/\delta_N \rceil}^j \chi_N (k) $. On the one hand, we observe that
\[\vert B_j \vert \leq \frac{1}{j} \left(j-\left\lceil \left\lfloor j\delta_N \right\rfloor /\delta_N \right\rceil +1\right) \leq \frac{1+ 1/\delta_N}{j} \]
so
\begin{align*}
\left\vert \sum_{j=\lceil 1/\delta_N \rceil}^{N-1} \frac{B_j}{j+1}   \right\vert &\leq (1 + 1/\delta_N)\sum_{j=\lceil 1/\delta_N \rceil}^{N-1} \frac{1}{j(j+1)} = (1+ 1/\delta_N) \left( \frac{1}{\lceil 1/\delta_N \rceil} - \frac{1}{N} \right) \\
 	&= \mathcal{O}(1). 
\end{align*}
On the other hand,
\[A_j= \frac{1}{j} \sum_{p=1}^{\left\lfloor j \delta_N \right\rfloor -1} \sum_{q=0}^{\lceil (p+1)/\delta_N \rceil -1-\lceil p/\delta_N \rceil} \chi_N (q+\lceil p/\delta_N \rceil) \]
where 
\begin{align*}
\lceil (p+1)/\delta_N \rceil -1-\lceil p/\delta_N \rceil &=1/\delta_N + \{-(p+1)/\delta_N\} - \{-p/\delta_N\}-1 \\
	 &= \lfloor 1/\delta_N \rfloor - \mathds{1}_{\{-p/\delta_N \} \geq \{ - (p+1)/\delta_N\} }
\end{align*}
 and \[\chi_N (q+\lceil p/\delta_N \rceil) = \chi_N (q+ \{-p/\delta_N\}).\] Since for all $q \in [\![0 ,  \lceil (p+1)/\delta_N \rceil -1-\lceil p/\delta_N \rceil ]\!] $, 
\[ 0\leq \delta_N (q+ \{-p/\delta_N\}) \leq \delta_N( 1/\delta_N + \{-(p+1)/\delta_N \} -1) <1\]
then $\left\{ \delta_N (q+ \{-p/\delta_N\}) \right\} = \delta_N (q+ \{-p/\delta_N\})$, and 
\[A_j =\frac{1}{j} \sum_{p=1}^{\left\lfloor j\delta_N \right\rfloor -1} \  \sum_{q=0}^{\lfloor 1/\delta_N \rfloor - \mathds{1}_{\{-p/\delta_N \} \geq \{ - (p+1)/\delta_N \} }} \delta_N (q+ \{-p/\delta_N\})  (1 - \delta_N(q+ \{-p/\delta_N\})). \]
Let us introduce \[\widetilde{A}_j := \frac{1}{j} \left( \left\lfloor j\delta_N \right\rfloor -1   \right) \sum_{q=1}^{\lfloor 1/\delta_N \rfloor} q\delta_N ( 1- q\delta_N).\]
We have for all $j\geq \lceil 1/\delta_N \rceil$, 
\begin{align*}
\vert A_j - \widetilde{A}_j \vert &=  \left\vert \frac{1}{j} \sum_{p=1}^{\left\lfloor j\delta_N \right\rfloor -1} \  \sum_{q=0}^{\lfloor 1/\delta_N \rfloor - \mathds{1}_{\{-p/\delta_N \} \geq \{ - (p+1)/\delta_N \} }} \delta_N\{-p/\delta_N\} \left(1 - 2q \delta_N- \{-p/\delta_N\}\delta_N \right) \right. \\
 	&\left. \qquad - \mathds{1}_{\{-p/\delta_N\} \geq \{-(p+1)/\delta_N\}}  \delta_N \lfloor 1/\delta_N \rfloor  ( 1-  \delta_N \lfloor 1/\delta_N \rfloor ) \right\vert \\
	&\leq  \frac{1}{j} \sum_{p=1}^{\left\lfloor j\delta_N \right\rfloor -1} \  \sum_{q=0}^{\lfloor 1/\delta_N \rfloor - \mathds{1}_{\{-p/\delta_N \} \geq \{ - (p+1)/\delta_N \} }} \delta_N \{-p/\delta_N\} \left\vert 1 - 2q\delta_N -  \{-p/\delta_N\}\delta_N \right\vert \\
	&\qquad +  \mathds{1}_{\{-p/\delta_N\} \geq \{-(p+1)/\delta_N \}}  \delta_N \lfloor 1/\delta_N \rfloor ( 1- \delta_N \lfloor 1/\delta_N \rfloor )   \\
	&\leq \frac{1}{j} \left( \left\lfloor j\delta_N \right\rfloor -1 \right) \left[ 2\delta_N (1 + 1/\delta_N)  +  \delta_N \right] \leq 2\delta_N^2 (1/\delta_N + 2)
\end{align*}
hence
\begin{align*}
\left \vert \sum_{j=\lceil 1/\delta_N \rceil}^{N-1} \frac{A_j}{j+1} -  \sum_{j=\lceil 1/\delta_N \rceil}^{N-1} \frac{\widetilde{A}_j}{j+1}  \right\vert &\leq 2\delta_N^2 (1/\delta_N + 2) \sum_{j=\lceil 1/\delta_N \rceil}^{N-1}  \frac{1}{j+1} \\
	&\underset{N\to\infty}{\sim} 2\delta_N \log (N\delta_N) = o (\log ( N\delta_N )).
\end{align*}
Moreover, \[\sum_{j=\lceil 1/\delta_N \rceil}^{N-1} \frac{\widetilde{A}_j}{j+1} = \delta_N  \sum_{q=1}^{\lfloor 1/\delta_N \rfloor} q\delta_N \left( 1- q\delta_N \right)  \sum_{j=\lceil 1/\delta_N \rceil}^{N-1}  \left(  \frac{1}{j+1} - \frac{(1+\{j \delta_N\})/\delta_N }{j(j+1)}  \right)   \]
with \[\delta_N  \sum_{q=1}^{\lfloor 1/\delta_N \rfloor} q\delta_N \left( 1- q\delta_N \right) \underset{N\to\infty}{\longrightarrow} \int_0^1 x(1-x)\mathrm{d}x = \frac{1}{6}  \]
and \[\sum_{j=\lceil 1/\delta_N \rceil}^{N-1}  \left(  \frac{1}{j+1} - \frac{(1+\{j \delta_N\})/\delta_N }{j(j+1)}  \right)   \underset{N\to\infty}{\sim} \log (N \delta_N) \]
since  \[ \left\vert  \sum_{j=\lceil 1/\delta_N \rceil}^{N-1}  \frac{(1+\{ j\delta_N \})/\delta_N}{j(j+1)} \right\vert  \leq  \frac{2}{\delta_N} \sum_{j=\lceil 1/\delta_N \rceil}^{N-1} \frac{1}{j(j+1)} = \frac{2}{\delta_N} \left( \frac{1}{\lceil 1/\delta_N \rceil} - \frac{1}{N}\right) = \mathcal{O}(1).\]
Consequently,
\[\sum_{j=\lceil 1/\delta_N \rceil}^{N-1} \frac{A_j}{j+1} \underset{N\to\infty}{\sim} \frac{1}{6} \log (N \delta_N). \]
Furthermore we notice that $\vert \widetilde{A}_N \vert \leq \delta_N \sum_{q=1}^{\lfloor 1/\delta_N \rfloor} q\delta_N \left( 1- q\delta_N \right) = \frac{1}{6} + o (1)$, thus $A_N+B_N = \mathcal{O}(1)$. \\
We deduce 
\begin{equation}\label{eq:varcase1part2}
\sum_{j=\lceil 1/\delta_N \rceil}^N \frac{\chi_N (j)}{j} = A_N + B_N + \sum_{j=\lceil 1/\delta_N \rceil}^{N-1} \frac{A_j + B_j}{j+1} \underset{N\to\infty}{\sim} \frac{1}{6} \log (N \delta_N )  
\end{equation}
Finally, combining \eqref{eq:varcase1part1} and \eqref{eq:varcase1part2} we get \eqref{eq:asymptVarXtildeMeso} for the case $\theta=1$.

\subsubsection*{\underline{General case $\theta >0$}:}

By triangular inequality, we have 
\[\left\vert  \sum_{j=1}^N  \frac{\Psi_N (j) \chi_N (j)}{j}  - \sum_{j=1}^N  \frac{\chi_N (j)}{j}  \right\vert \leq \sum_{j=1}^N \frac{1}{j} \vert \Psi_N (j) - 1 \vert .\]
Moreover, as for any given $\theta>0$ all the terms $\Psi_N (j) - 1 $ have constant sign (the sign of $1-\theta$), then, using Lemma~\ref{lem:EqPsi}, it follows
\[\sum_{j=1}^N \frac{1}{j} \vert \Psi_N (j) - 1 \vert =\vert \theta -1 \vert \sum_{j=1}^N \frac{1}{j (\theta +j -1)} \]
which converges as $N$ goes to infinity, therefore
\[\sum_{j=1}^N  \frac{\Psi_N (j) \chi_N (j)}{j} = \sum_{j=1}^N  \frac{\chi_N (j)}{j}  + \mathcal{O}_\theta (1) .\]
We deduce from \eqref{eq:varcase1part1} and \eqref{eq:varcase1part2} that
\[\sum_{j=1}^N  \frac{\Psi_N (j) \chi_N (j)}{j} \underset{N\to \infty}{=} \frac{1}{6} \log (N\delta_N) + o (\log N\delta_N),\]
which gives \eqref{eq:asymptVarXtildeMeso} for all $\theta>0$.
\end{proof}

\begin{proposition}
Suppose that the sequence $(\delta_N)$ satisfies $\left\{\begin{array}{l} 
\delta_N \underset{N\to \infty}{\longrightarrow} 0 \\
N \delta_N \underset{N\to \infty}{\longrightarrow} + \infty .
\end{array} \right.$ Suppose in addition that $(\alpha_N)$ is a constant sequence, say $\alpha_N = \alpha$ for all $N$. Then
\begin{equation}\label{eq:asymptVarXMeso}
\mathrm{Var} (X_N^{I_N}) \underset{N\to\infty}{\sim}  \left\{\begin{array}{ll}
\frac{\theta}{6} \log(N\delta_N) & \text{if } \alpha \text{ is irrationnal}\\
\theta \left( \frac{1}{6} + \frac{1}{6q^2} \right) \log(N\delta_N) & \text{if } \alpha=\frac{p}{q} \text{ with } p,q \text{ coprime integers}, q\geq 1.
\end{array}\right.
\end{equation}
\end{proposition}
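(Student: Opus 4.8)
The plan is, in three reductions, to bring everything down to the asymptotics of $\sum_{j\le N}|\omega_{N,j}|/j$ with $\omega_{N,j}:=\{j\beta_N\}-\{j\alpha\}$, and then to evaluate this sum by the period-grouping / Abel-summation scheme already used for $\mathrm{Var}(\widetilde{X}_N^{I_N})$. Recall first that
\[\mathrm{Var}(X_N^{I_N}) = \theta\sum_{j=1}^N\frac{\omega_{N,j}^2}{j}\Psi_N(j) + \theta^2\sum_{1\le j,k\le N}\frac{\omega_{N,j}\omega_{N,k}}{jk}\bigl(\Psi_N(j+k)\mathds{1}_{j+k\le N}-\Psi_N(j)\Psi_N(k)\bigr).\]
Since $|\omega_{N,j}|\le 1$, Lemma~\ref{lem:borne} makes the double sum $\mathcal{O}_\theta(1)$, hence negligible against $\log(N\delta_N)$. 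As in the proof of \eqref{eq:asymptVarXtildeMeso}, the numbers $\Psi_N(j)-1$ all have the sign of $1-\theta$, so by Lemma~\ref{lem:EqPsi} one has $\sum_j\frac1j|\Psi_N(j)-1|=|\theta-1|\sum_j\frac1{j(\theta+j-1)}=\mathcal{O}_\theta(1)$, and therefore $\mathrm{Var}(X_N^{I_N})=\theta\sum_{j=1}^N\omega_{N,j}^2/j+\mathcal{O}_\theta(1)$. Finally, the identity $|\{j\alpha\}-\{j\beta_N\}|-(\{j\alpha\}-\{j\beta_N\})^2=\{j\delta_N\}(1-\{j\delta_N\})=\chi_N(j)$ (the lemma used for $\mathrm{Var}(\widetilde{X}_N^I)$) gives $\omega_{N,j}^2=|\omega_{N,j}|-\chi_N(j)$, so that, invoking the $\theta=1$ case \eqref{eq:varcase1part1}--\eqref{eq:varcase1part2} of the preceding proposition,
\[\mathrm{Var}(X_N^{I_N}) = \theta\sum_{j=1}^N\frac{|\omega_{N,j}|}{j} - \frac{\theta}{6}\log(N\delta_N) + o\bigl(\log(N\delta_N)\bigr).\]

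Next I would unfold $|\omega_{N,j}|$: writing $\{j\beta_N\}=\{\{j\alpha\}+\{j\delta_N\}\}$ and distinguishing $\{j\alpha\}+\{j\delta_N\}<1$ from $\ge 1$ yields
\[|\omega_{N,j}| = \{j\delta_N\} + \bigl(1-2\{j\delta_N\}\bigr)\,\mathds{1}_{\{j\alpha\}+\{j\delta_N\}\ge 1},\]
hence $\sum_j|\omega_{N,j}|/j=S_N'+S_N''$ with $S_N':=\sum_j\{j\delta_N\}/j$ and $S_N'':=\sum_j(1-2\{j\delta_N\})\,\mathds{1}_{\{j\alpha\}+\{j\delta_N\}\ge 1}/j$. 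The sum $S_N'$ does not involve $\alpha$ and is treated exactly as in \eqref{eq:varcase1part1}--\eqref{eq:varcase1part2}: the block $j<1/\delta_N$ contributes $\mathcal{O}(1)$, and an Abel summation over $j\ge 1/\delta_N$, grouping indices by the period $\approx 1/\delta_N$ of $j\mapsto\{j\delta_N\}$, gives $S_N'\sim\bigl(\int_0^1 x\,\mathrm{d}x\bigr)\log(N\delta_N)=\tfrac12\log(N\delta_N)$.

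The heart of the argument is $S_N''$, where both $\{j\alpha\}$ and $\{j\delta_N\}$ enter. Running the same Abel summation, it suffices to show that the block averages $\delta_N\sum_{j_0<j\le j_0+1/\delta_N}(1-2\{j\delta_N\})\mathds{1}_{\{j\alpha\}\ge 1-\{j\delta_N\}}$ converge, as $N\to\infty$, to a constant $\kappa(\alpha)$, uniformly over starting points $j_0$; weighting these by the Abel weights then gives $S_N''\sim\kappa(\alpha)\log(N\delta_N)$, the first block costing only $\mathcal{O}(1)$. When $\alpha=p/q$ is rational I would split each block by the residue of $j$ modulo $q$, on which $\{j\alpha\}$ equals some constant $s/q$; inside such a sub-block $\{j\delta_N\}$ runs through an arithmetic progression of step $q\delta_N\to 0$ sweeping $[0,1)$, so the sub-block average tends to $\int_{1-s/q}^1(1-2u)\,\mathrm{d}u=-\tfrac sq\bigl(1-\tfrac sq\bigr)$, and averaging over $s\in\{0,\dots,q-1\}$ gives $\kappa(p/q)=-\tfrac16+\tfrac1{6q^2}$. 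When $\alpha$ is irrational I would use equidistribution of $(\{j\alpha\})_j$ — via $\bigl|\frac1L\sum_{v=1}^L e^{2i\pi v(k\alpha+\ell\delta_N)}\bigr|\le\bigl(L|\sin\pi(k\alpha+\ell\delta_N)|\bigr)^{-1}$, which for $k\ne 0$ is $o(1)$ uniformly in $j_0$ (as $k\alpha+\ell\delta_N\to k\alpha\notin\mathbb{Z}$) and for $k=0$ is handled, as in $S_N'$, by Koksma's inequality for $\{j\delta_N\}$ — to get $\kappa(\alpha)=\iint_{[0,1)^2}(1-2u)\mathds{1}_{a\ge 1-u}\,\mathrm{d}a\,\mathrm{d}u=\int_0^1(1-2u)u\,\mathrm{d}u=-\tfrac16$. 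In either case $\sum_j|\omega_{N,j}|/j\sim\bigl(\tfrac13+\tfrac1{6q^2}\bigr)\log(N\delta_N)$, with the convention $1/q^2=0$ when $\alpha$ is irrational, and substitution into the identity above yields
\[\mathrm{Var}(X_N^{I_N}) \sim \theta\Bigl(\tfrac13+\tfrac1{6q^2}-\tfrac16\Bigr)\log(N\delta_N) = \theta\Bigl(\tfrac16+\tfrac1{6q^2}\Bigr)\log(N\delta_N)\]
in the rational case, and $\sim\tfrac{\theta}{6}\log(N\delta_N)$ in the irrational case, which is \eqref{eq:asymptVarXMeso}.

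I expect the main obstacle to be making the block-average step rigorous uniformly in $j_0$, so that the Abel summation really produces a $\log(N\delta_N)$ with the correct constant. In the irrational case this forces one to control the equidistribution of $\{j\alpha\}$ over windows of length $\sim 1/\delta_N$ while $N\to\infty$ — one cannot simply invoke Weyl's theorem for a fixed irrational, and summing over the frequencies $k$ brings in the Diophantine behaviour of $\alpha$ — and also to check that the first block contributes $\mathcal{O}(1)$ rather than $\log(1/\delta_N)$. In the rational case it requires matching the two arithmetic scales $q$ and $1/\delta_N$ and absorbing the $\mathcal{O}(\delta_N)$ boundary errors accumulated over the $\sim N\delta_N$ blocks, exactly the kind of $o(\log(N\delta_N))$ remainder already dealt with in the proof of \eqref{eq:asymptVarXtildeMeso}.
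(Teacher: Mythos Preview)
Your reductions --- killing the double sum via Lemma~\ref{lem:borne}, stripping $\Psi_N(j)$ via Lemma~\ref{lem:EqPsi}, and reducing to $\sum_j(\omega_j^{(N)})^2/j$ --- are exactly what the paper does. Your decomposition $\omega_{N,j}^2=|\omega_{N,j}|-\chi_N(j)$ followed by $|\omega_{N,j}|=\{j\delta_N\}+(1-2\{j\delta_N\})\mathds{1}$ is a slight detour: expanding, it is algebraically identical to the paper's direct formula $(\omega_j^{(N)})^2=\{j\delta_N\}^2+(1-2\{j\delta_N\})\mathds{1}_{\{j\delta_N\}\ge 1-\{j\alpha\}}$, obtained from $\omega_j^{(N)}=\{j\delta_N\}-\mathds{1}_{\{j\delta_N\}\ge 1-\{j\alpha\}}$. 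So your $S_N''$ is exactly the indicator term the paper has to handle too, and your rational treatment by splitting residues modulo $q$ matches the paper's.

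The real difference is in the irrational case, and here the paper has a cleaner idea that dissolves precisely the obstacle you flag. Rather than controlling equidistribution of $\{j\alpha\}$ over windows via Koksma and worrying about the Diophantine type of $\alpha$, the paper views $\mu_N:=\delta_N\sum_{j=1}^{\lceil 1/\delta_N\rceil-1}\delta_{(j\delta_N,\,j\alpha)}$ as a measure on the torus $(\mathbb{R}/\mathbb{Z})^2$ and checks weak convergence to Lebesgue by Fourier: $\widehat{\mu}_N(k,l)=\delta_N\sum_j e^{2i\pi j(k\delta_N+l\alpha)}$ is a geometric sum, bounded by $2\delta_N/|e^{2i\pi(k\delta_N+l\alpha)}-1|$, and for $l\ne 0$ the denominator tends to $|e^{2i\pi l\alpha}-1|>0$ (since $\alpha\notin\mathbb{Q}$) while for $l=0$ the $k\ne 0$ term is a Riemann-sum cancellation. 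No Diophantine hypothesis enters. Integrating the bounded a.e.-continuous function $g(x,y)=(x-\mathds{1}_{x\ge 1-y})^2$ (or your $(1-2x)\mathds{1}_{x\ge 1-y}$) against $\mu_N$ then gives the block limit $\iint g=\tfrac16$ (resp.\ $-\tfrac16$) directly. The same computation with a shifted starting index only multiplies $\widehat{\mu}_N$ by a unimodular phase, so every block behaves the same way. This is the missing device in your outline; once you have it, the Abel-summation packaging is identical to the $\widetilde{X}$ proof, and the concerns about summing over frequencies $k$ and the first-block contribution disappear.
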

\begin{proof}
For $1\leq j \leq N$, let $\omega_j^{(N)} := \{ j \beta_N\}- \{ j\alpha_N \} = \{ j (\alpha + \delta_N) \} - \{ j\alpha \}$.
We recall that the variance at step $N$ is given by
\begin{align*}
\mathrm{Var} (X_N^{I_N} ) &= \theta \sum_{j=1}^N \frac{(\omega_j^{(N)})^2}{j} \Psi_N (j)  \\
&\qquad +\theta^2\sum_{1\leq j,k \leq N} \frac{\omega_j^{(N)} \omega_k^{(N)}}{jk} (\Psi_N (j+k) \mathds{1}_{j+k\leq N} - \Psi_N (j) \Psi_N (k)) .  
\end{align*}
Since all the $\omega_j^{(N)}$ are bounded by $1$, we have
\begin{align*}
&\left\vert \sum_{1\leq j,k \leq N} \frac{\omega_j^{(N)} \omega_k^{(N)}}{jk} (\Psi_N (j+k) \mathds{1}_{j+k\leq N} - \Psi_N (j) \Psi_N (k)) \right\vert \\
&\qquad \qquad \leq \sum_{1\leq j,k \leq N} \frac{1}{jk} \left\vert \Psi_N (j+k) \mathds{1}_{j+k\leq N} - \Psi_N (j) \Psi_N (k) \right\vert \\
	&\qquad \qquad = \mathcal{O}_\theta (1) 
\end{align*}   
by Lemma~\ref{lem:borne}, so that
\begin{equation}\label{eq:asymptVarXMesoStep1}
\mathrm{Var} (X_N^{I_N} ) = \theta \sum_{j=1}^N \frac{(\omega_j^{(N)})^2}{j} \Psi_N (j) + \mathcal{O}_\theta (1). 
\end{equation}

Assume \underline{$\alpha$ to be irrational}. The result derives from the following lemma:
\begin{lemma}
Let $t\in \mathbb{R} \setminus \mathbb{Q}$. Let $(\varepsilon_n)_{n\geq 1}$ be a sequence of positive real numbers which converges to $0$. Let $f$ be a real-valued continuous function on $[0,1]$. Then
\[\varepsilon_n \sum_{j=1}^{ \lceil \frac{1}{\varepsilon_n} \rceil -1} f(j\varepsilon_n) \mathds{1}_{j\varepsilon_n \geq 1- \{jt \} } \underset{n\to\infty}{\longrightarrow} \int_0^1 xf(x)\mathrm{d}x.   \] 
\end{lemma}
\begin{proof}
For all $n\geq 1$, let $\mu_n:= \varepsilon_n \sum_{j=1}^{\lceil \frac{1}{\varepsilon_n} \rceil -1} \delta_{(j\varepsilon_n , jt)}$ a measure on the torus $\mathbb{R}/\mathbb{Z} \times \mathbb{R}/\mathbb{Z}$. For all $(k,l)\in\mathbb{Z}^2$, the Fourier transform of $\mu_n$ in $(k,l)$ is given by
\[\widehat{\mu}_n (k,l) =\varepsilon_n \sum_{j=1}^{ \lceil \frac{1}{\varepsilon_n} \rceil -1} \left(\mathrm{e}^{2i\pi (k \varepsilon_n + lt)}\right)^j \] 
which converges to $1$ if $(k,l)=(0,0)$ and to $0$ otherwise as $n$ goes to infinity. Thus $(\mu_n)$ converges weakly to the uniform measure on $\mathbb{R}/\mathbb{Z} \times \mathbb{R}/\mathbb{Z}$. Let $f$ be a real-valued continuous function on $[0,1]$ and let $g$ be a function from the torus to $\mathbb{R}$ defined by $g(x,y) = f(x)\mathds{1}_{x\geq 1-y}$. Then $g$ is continuous everywhere excepted at most on $x=0$, $y=0$ and $x=1-y$. Consequently, the set of discontinuities of $g$ is at most one-dimensional, which is of measure zero for the Lebesgue measure of dimension $2$. Hence
\[\lim_{n\to\infty} \varepsilon_n \sum_{j=1}^{\lceil \frac{1}{\varepsilon_n} \rceil -1} g (j\varepsilon_n , jt) = \int_0^1 \int_0^1 g(x,y) \mathrm{d}x \mathrm{d}y = \int_0^1 x f(x) \mathrm{d}x. \]
\end{proof}

Noticing that $\omega_j^{(N)} = \{j\delta_N \} - \mathds{1}_{ \{ j \delta_N \} \geq 1 - \{ j \alpha \}}$ and following the same scheme than previously (treating the case $\theta =1$ and then the case $\theta >0$), we get
\[\theta \sum_{j=1}^N \frac{(\omega_j^{(N)})^2}{j} \Psi_N (j) \underset{N\to \infty } = \theta \log (N\delta_N) \delta_N \sum_{q=1}^{\lceil \frac{1}{\delta_N} \rceil} (q\delta_N -  \mathds{1}_{ q\delta_N  \geq 1 - \{ q \alpha \} } )^2 + o (\log (N\delta_N) ) \] 
with 
\begin{align*}
\delta_N \sum_{q=1}^{\lceil \frac{1}{\delta_N} \rceil} (q\delta_N -  \mathds{1}_{ q\delta_N  \geq 1 - \{ q \alpha \} } )^2 &= \delta_N \sum_{q=1}^{\lceil \frac{1}{\delta_N} \rceil} \left[ (q\delta_N)^2 +  (1 - 2 q\delta_N ) \mathds{1}_{ q\delta_N  \geq 1 - \{ q \alpha \} } \right] \\
	&\underset{N\to\infty}{\longrightarrow} \int_0^1 (x^2 + x(1-2x))\mathrm{d}x  = \frac{1}{6},
\end{align*}
which, jointly to \eqref{eq:asymptVarXMesoStep1}, gives \eqref{eq:asymptVarXMeso} for the irrational case.

Assume \underline{$\alpha$ to be rational}, say $\alpha = \frac{p}{q}$ with $p,q$ coprime numbers and $q\geq 1$. \\
Let us define the function $f:(x,y) \mapsto (x-\mathds{1}_{x\geq 1-y})^2$ on $[0,1]^2$, in order to write $(\omega_j^{(N)})^2=f(\{ j\delta_N \} , \{j\alpha \})$ for all $j$. We slightly adapt the previous proof, starting again with the case $\theta =1$. \\
We decompose into three parts 
\[\sum_{j=1}^N \frac{1}{j} f(\{ j\delta_N \} , \{j\alpha \}) = \sum_{j=1}^{\left\lceil \frac{1}{\delta_N} \right\rceil -1} \frac{1}{j} f( j\delta_N  , \{j\alpha \}) + (A_N + B_N) +  \sum_{j=\left\lceil \frac{1}{\delta_N} \right\rceil}^{N-1} \frac{1}{j+1} (A_j + B_j) \]
with for all $j\geq 1$, 
\[A_j:= \frac{1}{j} \sum\limits_{k=\left\lceil \frac{1}{\delta_N} \right\rceil}^{\lceil \lfloor j\delta_N \rfloor / \delta_N \rceil -1 }   f(\{ k\delta_N \} , \{k\alpha \})\] 
and 
\[B_j :=\frac{1}{j} \sum\limits_{k=\lceil \lfloor j\delta_N \rfloor / \delta_N \rceil}^j   f(\{ k\delta_N \} , \{k\alpha \}).\]
For the first part, noticing that for all $x\in \left[ 0,\frac{1}{2} \right]$ and $j \geq 1$ we have $f(x, \{ j\alpha \} ) \leq f(x , \frac{q-1}{q})$, then
\begin{align*}
0\leq \sum_{j=1}^{\left\lceil \frac{1}{\delta_N} \right\rceil -1} \frac{1}{j} f( j\delta_N  , \{j\alpha \}) &\leq  \sum_{j=1}^{\left\lceil \frac{1}{2\delta_N} \right\rceil -1} \frac{1}{j} f\left( j\delta_N  , \frac{q-1}{q}\right) +  \sum_{j=\left\lceil \frac{1}{2\delta_N} \right\rceil}^{\left\lceil \frac{1}{\delta_N} \right\rceil -1} \frac{1}{j} \\
&\underset{N\to\infty}{\longrightarrow} \int_0^{1/2} \frac{(x-\mathds{1}_{x\geq \frac{1}{q}})^2}{x} \mathrm{d}x + \int_{1/2}^1 \frac{1}{x}\mathrm{d}x   <+\infty.  
\end{align*} 
For the third part, since $f$ is bounded by $1$ on $[0,1]^2$, it is easy to check that \\ $\sum\limits_{j=\lceil \frac{1}{\delta_N} \rceil}^{N-1} \frac{1}{j+1}  B_j = \mathcal{O} (1)$. Moreover, $A_j$ for $j\geq 1$ can be formulated as
\begin{align*}
A_j &= \frac{1}{j} \sum_{l=1}^{\left\lfloor j\delta_N \right\rfloor -1} \left(\sum_{k=\left\lceil \frac{l}{q\delta_N} \right\rceil}^{\left\lfloor \frac{1}{q} \left\lceil \frac{l+1}{\delta_N} \right\rceil \right\rfloor -1}  \sum_{m=0}^{q-1} f_m ( \{ (kq+m)\delta_N  \} )\right) 
\\
	&\qquad + \sum_{r = \left\lceil \frac{l}{\delta_N} \right\rceil}^{q \left\lceil \frac{l}{q\delta_N} \right\rceil -1} f_r (\{r\delta_N \} ) + \sum_{r = q \left\lfloor \frac{1}{q} \left\lceil \frac{l+1}{\delta_N} \right\rceil \right\rfloor }^{\left\lceil \frac{l+1}{\delta_N} \right\rceil -1} f_r (\{r\delta_N \}) 
\end{align*}
where $f_n (x):= f(x,\{n\alpha \})$ for all $n$. This new expression of $A_j$ is manageable. Indeed, 
\begin{align*}
\frac{1}{j} \sum_{l=1}^{\left\lfloor j\delta_N \right\rfloor -1} \left( \sum_{r = \left\lceil \frac{l}{\delta_N} \right\rceil}^{q \left\lceil \frac{l}{q\delta_N} \right\rceil -1} f_r (\{r\delta_N \} ) + \sum_{r = q \left\lfloor \frac{1}{q} \left\lceil \frac{l+1}{\delta_N} \right\rceil \right\rfloor }^{\left\lceil \frac{l+1}{\delta_N} \right\rceil -1} f_r (\{r\delta_N \})\right) &\leq \frac{1}{j}\left(\left\lfloor j\delta_N \right\rfloor -1 \right)(q+q) \\
	&\leq 2q \delta_N, 
\end{align*}
and for all $l, m$, 
\[\sum_{k=\left\lceil \frac{l}{q\delta_N} \right\rceil}^{\left\lfloor \frac{1}{q} \left\lceil \frac{l+1}{\delta_N} \right\rceil \right\rfloor -1}  f_m ( \{ (kq+m)\delta_N  \}) = \sum_{k=0}^{\left\lfloor \frac{1}{q} \left\lceil \frac{l+1}{\delta_N} \right\rceil \right\rfloor -1 - \left\lceil \frac{l}{q\delta_N} \right\rceil}  f_m \left(  \left(kq+m +q\left\{ \frac{-l}{q\delta_N}  \right\} \right)\delta_N  \right) .\]
Let $\widetilde{A}_j := \frac{1}{j} (\lfloor j\delta_N \rfloor -1) \sum\limits_{m=0}^{q-1} \sum\limits_{k=0}^{\left\lfloor \frac{1}{q\delta_N}  \right\rfloor -1 } f_m ((kq+m)\delta_N)$. Then
\begin{align*}
\vert A_j - \widetilde{A}_j \vert &\leq \frac{1}{j} \sum_{l=1}^{\left\lfloor j\delta_N \right\rfloor -1}   \sum_{m=0}^{q-1} \sum_{k=0}^{\left\lfloor \frac{1}{q\delta_N}  \right\rfloor -1 } \left\vert  f_m \left(  \left(kq+m +q\left\{ \frac{-l}{q\delta_N}  \right\} \right)\delta_N  \right) -  f_m \left( (kq+m)\delta_N  \right) \right\vert \\
	&\qquad + \mathcal{O} (\delta_N).
\end{align*}
For all $0\leq m\leq q-1$, the function $f_m$ is piecewise polynomial on $[0,1]$ (it has one finite discontinuity at point $t=1-\{m\alpha\}$). Thus
\begin{align*}
&\left\vert  f_m \left(  \left(kq+m +q\left\{ \frac{-l}{q\delta_N}  \right\} \right)\delta_N  \right) -  f_m \left( (kq+m)\delta_N  \right) \right\vert \\ 
&\qquad \qquad  \leq q\left\{ \frac{-l}{q\delta_N}  \right\} \delta_N \esssup_{[0,1]} \vert f_m^\prime \vert \\
	&\qquad \qquad \qquad + \mathds{1}_{t \leq \left( kq+m + q\left\{ \frac{-l}{q\delta_N}  \right\} \right)\delta_N < t+ q\left\{ \frac{-l}{q\delta_N} \right\} \delta_N     }  \\ 
	&\qquad \qquad \leq 2 q\delta_N + \mathds{1}_{t \leq \left( kq+m + q\left\{ \frac{-l}{q\delta_N}  \right\} \right)\delta_N < t+ q\left\{ \frac{-l}{q\delta_N} \right\} \delta_N   }. 
\end{align*}
At fixed $l,m$, it is easy to check that there is at most one $k$ such that the indicator function equals $1$. Hence
\[ \vert A_j - \widetilde{A}_j \vert \leq \frac{1}{j} \times j\delta_N \times q \times \left[ \frac{1}{q\delta_N} \times (2q \delta_N) +1   \right] + \mathcal{O} (\delta_N) = \mathcal{O} (\delta_N)\]
and consequently
\[\left \vert \sum_{j=\lceil 1/\delta_N \rceil}^{N-1} \frac{A_j}{j+1} -  \sum_{j=\lceil 1/\delta_N \rceil}^{N-1} \frac{\widetilde{A}_j}{j+1}  \right\vert = o (\log (N\delta_N)). \]
Moreover, 
\begin{align*}
\sum_{j=\lceil 1/\delta_N \rceil}^{N-1} \frac{\widetilde{A}_j}{j+1} &= \delta_N \sum_{m=0}^{q-1}  \sum_{k=0}^{\left\lfloor \frac{1}{q\delta_N}  \right\rfloor -1 } f_m ((kq+m)\delta_N) \left( \sum_{j=\lceil 1/\delta_N \rceil}^{N-1} \frac{1}{j+1}  + \mathcal{O}(1) \right) \\
	&= \frac{1}{q} \sum_{m=0}^{q-1} \left( \int_0^1 f_m (x) \mathrm{d}x + o(1)  \right) (\log(N\delta_N) + \mathcal{O}(1)).
\end{align*}
Since $p$ and $q$ are coprime numbers, the numbers $\left\{ m \alpha \right\}$ cycle through some rearrangement of the numbers $0 , \frac{1}{q} ,\cdots , \frac{q-1}{q}$, hence
\begin{align*}
\frac{1}{q} \sum_{m=0}^{q-1} \int_0^1 f_m (x) \mathrm{d}x &= \frac{1}{q} \sum_{m=0}^{q-1} \int_0^1 \left( x-\mathds{1}_{x\geq 1- \frac{m}{q}} \right)^2 \mathrm{d}x \\
 	&= \frac{1}{3} - \frac{1}{q} \sum_{m=1}^{q-1} \frac{m}{q} \left( 1- \frac{m}{q} \right) \\
 	&= \frac{1}{6} + \frac{1}{6q^2}
\end{align*}
(indeed when $q=1$ the last equality is satisfied too). \\
Furthermore, note that $\vert \widetilde{A}_N \vert \leq \delta_N \sum\limits_{m=0}^{q-1} \sum\limits_{k=0}^{\left\lfloor \frac{1}{q\delta_N}  \right\rfloor -1 } 1 \leq 1$ and then $\vert A_N + B_N \vert = \mathcal{O} (1)$.  

Finally, putting it all together, we have
\begin{equation}\label{eq:asymptVarXMesoTheta1}
\sum_{j=1}^N \frac{(\omega_j^{(N)})^2}{j} = \left(\frac{1}{6} + \frac{1}{6q^2} \right) \log(N\delta_N) + o (\log (N \delta_N))
\end{equation}
and using the same argument as in the previous proof we extend \eqref{eq:asymptVarXMesoTheta1} to all $\theta>0$, which provides
\[\sum_{j=1}^N \frac{(\omega_j^{(N)})^2}{j} \Psi_N (j) = \left(\frac{1}{6} + \frac{1}{6q^2} \right) \log(N\delta_N) + o (\log (N \delta_N)) \]
From \eqref{eq:asymptVarXMesoStep1} we deduce \eqref{eq:asymptVarXMeso} for the rational case.
\end{proof}

\subsection{Limiting normality for a single mesoscopic arc}

\everymath{\displaystyle}
\begin{theorem}\label{th:th:TCLMesowreath}
Suppose that the sequence $(\delta_N)$ satisfies $\left\{\begin{array}{l} 
\delta_N \underset{N\to \infty}{\longrightarrow} 0 \\
N \delta_N \underset{N\to \infty}{\longrightarrow} + \infty .
\end{array} \right.$ \\
Let $\widetilde{Y}_N^{I_N} := \frac{\widetilde{X}_N^{I_N} - \mathbb{E} (\widetilde{X}_N^{I_N}) }{\left( \frac{\theta}{6} \log (N \delta_N) \right)^{1/2}}$. Then
\[\widetilde{Y}_N^{I_N}  \overset{\text{d}}{\longrightarrow} Z  \] where $Z$ is a centred Gaussian random variable of variance $1$.
\end{theorem}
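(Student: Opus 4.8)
The plan is to run the proof of Theorem~\ref{th:TCLWreath} in the single-arc case $m=1$, with the fixed endpoints replaced by the moving ones $\alpha_N,\beta_N$ and the normalisation $\ell\theta\log N$ replaced by $\frac{\theta}{6}\log(N\delta_N)$. Set $\omega_j^{(N)}:=\{j\beta_N\}-\{j\alpha_N\}$ and, for $j,p\in\mathbb{N}^*$,
\[ b_{j,p}^{(N)}:=\mathds{1}_{\phi_{j,p}>\{j\alpha_N\}}-\mathds{1}_{\phi_{j,p}>\{j\beta_N\}}-\omega_j^{(N)}, \]
so that, proceeding as in the reformulation of $\widetilde{X}_N^I$ in Section~\ref{sec:3}, one has $\widetilde{X}_N^{I_N}-\mathbb{E}(\widetilde{X}_N^{I_N})=\sum_{j=1}^N\sum_{p=1}^{a_{N,j}}b_{j,p}^{(N)}$, where the $b_{j,p}^{(N)}$ are centred, i.i.d.\ in $p$ at fixed $j$, and satisfy $|b_{j,p}^{(N)}|\le 1$. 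Using the Feller coupling of Lemma~\ref{lem:majPoisson}, introduce independent Poisson($\theta/j$) random variables $W_j$ and put $B_j:=\sum_{p=1}^{W_j}b_{j,p}^{(N)}$, $V_{N,j}:=B_j\big/\big(\tfrac{\theta}{6}\log(N\delta_N)\big)^{1/2}$ and $T_N:=\sum_{j=1}^N V_{N,j}$; since the $W_j$ and the $\phi_{j,p}$ are independent, $(V_{N,j})_{1\le j\le N}$ is a triangular array of independent random variables.

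First I would show that $\widetilde{Y}_N^{I_N}-T_N$ converges to $0$ in probability. As in Theorem~\ref{th:TCLWreath}, this difference equals $\big(\tfrac{\theta}{6}\log(N\delta_N)\big)^{-1/2}\sum_{j=1}^N\big(\sum_{p=1}^{a_{N,j}}b_{j,p}^{(N)}-B_j\big)$; using $|b_{j,p}^{(N)}|\le 1$ we get $\big|\sum_{p=1}^{a_{N,j}}b_{j,p}^{(N)}-B_j\big|\le|a_{N,j}-W_j|$, and summing over $j$ Lemma~\ref{lem:majPoisson} gives $\mathbb{E}\big(\sum_{j=1}^N|a_{N,j}-W_j|\big)=\mathcal{O}_\theta(1)$, so Markov's inequality combined with $\log(N\delta_N)\to+\infty$ (a consequence of $N\delta_N\to+\infty$) yields the claim.

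Next I would prove $T_N\overset{\text{d}}{\longrightarrow}\mathcal{N}(0,1)$ via the Lindeberg--Feller theorem. The same moment identity as in Theorem~\ref{th:TCLWreath} gives $\mathrm{Var}(B_j)=\mathbb{E}(W_j)\,\mathbb{E}\big((b_{j,1}^{(N)})^2\big)=\tfrac{\theta}{j}\{j\delta_N\}(1-\{j\delta_N\})$, so that $\sum_{j=1}^N\mathrm{Var}(V_{N,j})=\big(\tfrac{\theta}{6}\log(N\delta_N)\big)^{-1}\theta\sum_{j=1}^N\tfrac1j\{j\delta_N\}(1-\{j\delta_N\})$. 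Because $\{j\delta_N\}(1-\{j\delta_N\})\le\tfrac14$ and, by Lemma~\ref{lem:EqPsi}, $\sum_{j=1}^N\tfrac1j|\Psi_N(j)-1|=|\theta-1|\sum_{j=1}^N\tfrac{1}{j(\theta+j-1)}=\mathcal{O}_\theta(1)$, this sum differs from $\mathrm{Var}(\widetilde{X}_N^{I_N})=\theta\sum_{j=1}^N\tfrac{\Psi_N(j)}{j}\{j\delta_N\}(1-\{j\delta_N\})$ by $\mathcal{O}_\theta(1)$, hence is $\sim\tfrac{\theta}{6}\log(N\delta_N)$ by \eqref{eq:asymptVarXtildeMeso}, giving $\sum_{j=1}^N\mathrm{Var}(V_{N,j})\to 1$. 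For the Lindeberg condition I would use $|V_{N,j}|\le W_j\big/\big(\tfrac{\theta}{6}\log(N\delta_N)\big)^{1/2}$ to reduce, for $\varepsilon>0$, to estimating $\big(\tfrac{\theta}{6}\log(N\delta_N)\big)^{-1}\sum_{j=1}^N\mathbb{E}\big(W_j^2\mathds{1}_{W_j>\varepsilon(\theta\log(N\delta_N)/6)^{1/2}}\big)$, and then invoke the elementary bound $\mathbb{E}(W_j^2\mathds{1}_{W_j>c})=\tfrac{\theta}{j}e^{-\theta/j}\mathds{1}_{1>c}+\mathcal{O}_\theta(1/j^2)$ from the proof of Theorem~\ref{th:TCLWreath}: once $c=\varepsilon(\theta\log(N\delta_N)/6)^{1/2}>1$, i.e.\ for all large $N$, the sum is $\mathcal{O}_\theta(1)$, so the whole expression tends to $0$. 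Lindeberg--Feller then gives $T_N\overset{\text{d}}{\longrightarrow}\mathcal{N}(0,1)$, and Slutsky's theorem concludes.

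The main point to note is that there is no genuinely new difficulty here: the substantive input is the variance asymptotics \eqref{eq:asymptVarXtildeMeso}, already established, and everything else is a transcription of the $m=1$ case of Theorem~\ref{th:TCLWreath}. The only subtlety is that, unlike in the fixed-arc setting, $\log N/\log(N\delta_N)$ need not stay bounded as $N\to\infty$; this is harmless because in the Lindeberg estimate the one term that would carry a factor $\log N$ also carries the indicator $\mathds{1}_{1>c}$, which vanishes for all sufficiently large $N$.
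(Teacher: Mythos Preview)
Your proof is correct and follows essentially the same route as the paper's: the same Poisson replacement via Feller coupling, the same triangular array $V_{N,j}$, the same use of \eqref{eq:asymptVarXtildeMeso} for the variance normalisation, and the same Lindeberg verification via the bound $|V_{N,j}|\le W_j/(\tfrac{\theta}{6}\log(N\delta_N))^{1/2}$. Your explicit remark that the $\mathds{1}_{1>c}$ factor kills the potentially dangerous $\log N$ contribution (so that unboundedness of $\log N/\log(N\delta_N)$ is harmless) makes the Lindeberg step slightly more transparent than the paper's terse ``$\to 0$''.
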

\everymath{}
\begin{proof}
This proof is very similar to the proof of Theorem~\ref{th:TCLWreath}, in the particular case of one interval. \\
We introduce again the Poisson variables $(W_j)$ from Feller Coupling. We denote for all $j,p$, $b_{j,p}^{(N)} := \mathds{1}_{\phi_{j,p} > \{j\alpha_N \}} -  \mathds{1}_{\phi_{j,p} > \{j\beta_N \}}+ \{j\alpha_N \} - \{j\beta_N \}$, and $T_N := \sum_{j=1}^N V_{N,j}$ with for all $j$, 
\[V_{N,j} := \frac{1}{\left( \frac{\theta}{6} \log (N \delta_N) \right)^{1/2}} \sum_{p=1}^{W_j} b_{j,p}^{(N)}.\]  
Let $\varepsilon >0$. Using Markov's inequality and Lemma~\ref{lem:majPoisson},
\begin{align*}
\mathbb{P} \left( \vert \widetilde{Y}_N^{I_N} - T_N  \vert > \varepsilon \right) &\leq \frac{1}{\varepsilon \left( \frac{\theta}{6} \log (N \delta_N)  \right)^{1/2}} \sum_{j=1}^N \mathbb{E} \sum_{p=1}^{\vert a_{N,j} -W_j \vert} \vert b_{j,p}^{(N)} \vert \\
	&= \frac{1}{\varepsilon \left( \frac{\theta}{6} \log (N \delta_N)  \right)^{1/2}} \times \mathcal{O}_\theta (1).
\end{align*}
Hence $ \widetilde{Y}_N^{I_N} - T_N \overset{\mathbb{P}}{\longrightarrow} 0$.
Moreover, for all $j$, 
\[\mathrm{Var} (V_{N,j} ) = \frac{1}{\frac{1}{6} \log (N \delta_N) }  \frac{1}{j} \{ j\delta_N \} ( 1- \{ j\delta_N \} ).\]
In a similar way as in the proof of Theorem~\ref{th:TCLWreath}, we have 
\[ \sum_{j=1}^N \mathrm{Var} (V_{N,j}) = \frac{6}{ \log (N \delta_N) } \sum_{j=1}^N  \frac{ \{ j\delta_N \} ( 1- \{ j\delta_N \} ) }{j} \underset{N\to \infty}{\longrightarrow} 1\]
and
\[\sum_{j=1}^N \mathbb{E} \left( (V_{N,j})^2 \mathds{1}_{\vert V_{N,j} \vert > \varepsilon}  \right) \leq \frac{1}{\frac{\theta}{6} \log (N \delta_N) }  \sum_{j=1}^N \mathbb{E} \left( W_j^2 \mathds{1}_{W_j > \varepsilon \left( \frac{\theta}{6} \log (N \delta_N)   \right)^{1/2}}  \right) \underset{N\to \infty}{\longrightarrow} 0.\]
Then $(T_N)$ converges in distribution to $\mathcal{N}(0,1)$. Slutsky's theorem ends the proof.
\end{proof}

\everymath{\displaystyle}
\begin{theorem}\label{th:TCLMeso}
Suppose that $I_N= \left( \mathrm{e}^{2i\pi \alpha} , \mathrm{e}^{2i\pi (\alpha + \delta_N)} \right]$ with $\left\{\begin{array}{l} 
\delta_N \underset{N\to \infty}{\longrightarrow} 0 \\
N \delta_N \underset{N\to \infty}{\longrightarrow} + \infty .
\end{array} \right.$ \\
Let $Y_N^{I_N} := \frac{X_N^{I_N} - \mathbb{E} (X_N^{I_N}) }{\left( \theta c_2 (\alpha)  \log (N \delta_N) \right)^{1/2}}$ where $c_2 (\alpha)$ is a constant defined by
\[c_2 (\alpha):= \left\{ \begin{array}{cl}
	\frac{1}{6} &\text{ if $\alpha$ is irrational} \\
	\frac{1}{6} + \frac{1}{6q^2} &\text{ if $\alpha = \frac{p}{q}$, $p,q$ coprime numbers, $q\geq 1$.}
\end{array}  
\right. \]
Then
\[Y_N^{I_N}  \overset{\text{d}}{\longrightarrow} Z  \] where $Z$ is a centred Gaussian random variable of variance $1$.
\end{theorem}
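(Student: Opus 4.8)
The plan is to run the same Feller-coupling-plus-Lindeberg argument as in the proofs of Theorems~\ref{th:TCLWreath} and \ref{th:th:TCLMesowreath}, specialised to a single interval and to the mesoscopic regime. The only genuinely new ingredient is the variance estimate just established, which I will use in the form
\[\sum_{j=1}^N \frac{(\omega_j^{(N)})^2}{j} \underset{N\to\infty}{\sim} c_2(\alpha)\,\log(N\delta_N),\]
where $\omega_j^{(N)} := \{ j(\alpha + \delta_N) \} - \{ j\alpha \}$; this follows from the previous proposition together with Lemma~\ref{lem:EqPsi}, since $\left\vert \sum_{j=1}^N \frac{(\omega_j^{(N)})^2}{j}(\Psi_N(j) - 1) \right\vert \leq \sum_{j=1}^N \frac{1}{j} \vert \Psi_N(j) - 1 \vert = \mathcal{O}_\theta(1)$.

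First I would use the exact identity $X_N^{I_N} = N\delta_N - \sum_{j=1}^N a_{N,j}\, \omega_j^{(N)}$ valid for permutation matrices, so that $X_N^{I_N} - \mathbb{E}(X_N^{I_N}) = -\sum_{j=1}^N \left( a_{N,j} - \frac{\theta}{j}\Psi_N(j) \right) \omega_j^{(N)}$. Introducing the independent Poisson random variables $(W_j)$ of parameter $\theta/j$ provided by Lemma~\ref{lem:majPoisson}, I decompose
\[X_N^{I_N} - \mathbb{E}(X_N^{I_N}) = -\sum_{j=1}^N (a_{N,j} - W_j)\,\omega_j^{(N)} - \sum_{j=1}^N \left( W_j - \frac{\theta}{j} \right)\omega_j^{(N)} + \sum_{j=1}^N \frac{\theta}{j}(\Psi_N(j) - 1)\,\omega_j^{(N)}.\]
Since $\vert \omega_j^{(N)} \vert \leq 1$, the first sum has expected absolute value $\mathcal{O}_\theta(1)$ by Lemma~\ref{lem:majPoisson}, and the third is $\mathcal{O}_\theta(1)$ by Lemma~\ref{lem:EqPsi}; dividing by $(\theta c_2(\alpha)\log(N\delta_N))^{1/2}$ and applying Markov's inequality, both contributions tend to $0$ in probability because $N\delta_N \to \infty$ forces $\log(N\delta_N) \to \infty$. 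By Slutsky's theorem it therefore suffices to show that
\[T_N := -\frac{1}{(\theta c_2(\alpha)\log(N\delta_N))^{1/2}} \sum_{j=1}^N \left( W_j - \frac{\theta}{j} \right)\omega_j^{(N)} \overset{\text{d}}{\longrightarrow} \mathcal{N}(0,1).\]

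To prove this I apply the Lindeberg--Feller theorem to the triangular array $V_{N,j} := -(\theta c_2(\alpha)\log(N\delta_N))^{-1/2}(W_j - \frac{\theta}{j})\omega_j^{(N)}$, $1\leq j\leq N$, whose rows are independent. One has $\mathrm{Var}(V_{N,j}) = \frac{(\omega_j^{(N)})^2}{c_2(\alpha)\, j\, \log(N\delta_N)}$, hence $\sum_{j=1}^N \mathrm{Var}(V_{N,j}) = \frac{1}{c_2(\alpha)\log(N\delta_N)} \sum_{j=1}^N \frac{(\omega_j^{(N)})^2}{j} \to 1$ by the variance estimate above. For the Lindeberg condition, the bound $\vert V_{N,j} \vert \leq (\theta c_2(\alpha)\log(N\delta_N))^{-1/2} \vert W_j - \frac{\theta}{j} \vert$ reduces matters to controlling $\frac{1}{\log(N\delta_N)} \sum_{j=1}^N \mathbb{E}\left( (W_j - \frac{\theta}{j})^2 \mathds{1}_{\vert W_j - \theta/j \vert > \varepsilon'(\log(N\delta_N))^{1/2}} \right)$ for $\varepsilon' > 0$; the elementary Poisson moment estimate already carried out in the proof of Theorem~\ref{th:TCLfix} shows that the inner sum is at most $N\, \mathds{1}_{1 > \varepsilon'(\log(N\delta_N))^{1/2}} + \mathcal{O}_\theta(1)$, which equals $\mathcal{O}_\theta(1)$ once $N$ is large since $\log(N\delta_N)\to\infty$, so the whole expression tends to $0$. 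Thus Lindeberg--Feller gives $T_N \overset{\text{d}}{\longrightarrow} \mathcal{N}(0,1)$, and the proof is complete.

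I do not expect a real obstacle here: the analytic difficulty is entirely contained in the variance asymptotics, which is already proved, and the rest is the same two-step scheme (coupling, then Lindeberg--Feller) used twice earlier. The one point worth keeping in mind is that the hypothesis $N\delta_N \to \infty$ enters the CLT argument itself in two places beyond the variance estimate: it guarantees $\log(N\delta_N) \to \infty$, which both makes the Feller-coupling remainders negligible after normalisation and makes the Lindeberg truncation level diverge, so that the indicator $\mathds{1}_{1 > \varepsilon'(\log(N\delta_N))^{1/2}}$ eventually vanishes.
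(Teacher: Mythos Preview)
Your proof is correct and follows essentially the same route as the paper's: introduce the Feller-coupling Poisson variables $(W_j)$, define $V_{N,j} = (\theta/j - W_j)\,\omega_j^{(N)}/(\theta c_2(\alpha)\log(N\delta_N))^{1/2}$, show $Y_N^{I_N} - T_N \to 0$ in probability via Markov's inequality together with Lemma~\ref{lem:majPoisson} and Lemma~\ref{lem:EqPsi}, verify the Lindeberg condition exactly as in Theorem~\ref{th:TCLfix}, and conclude with Slutsky. Your write-up is in fact slightly more explicit than the paper's in two respects: you spell out the three-term decomposition of $X_N^{I_N} - \mathbb{E}(X_N^{I_N})$, and you flag precisely where the hypothesis $N\delta_N \to \infty$ is used beyond the variance asymptotics.
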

\everymath{}
\begin{proof}
This proof is very similar to the proof of Theorem~\ref{th:TCLfix}, in the particular case of one interval.  
We introduce again the Poisson variables $(W_j)$ from Feller Coupling. Let $T_N := \sum_{j=1}^N V_{N,j}$ with for all $j$, 
\[V_{N,j} := \frac{ \frac{\theta}{j} - W_j }{\left( \theta c_2 (\alpha)  \log (N \delta_N) \right)^{1/2}}  \omega_j^{(N)}.\]  
Let $\varepsilon >0$. Using Markov's inequality, Lemma~\ref{lem:EqPsi} and Lemma~\ref{lem:majPoisson},
\begin{align*}
\mathbb{P} \left( \vert Y_N^{I_N} - T_N \vert > \varepsilon \right) &\leq \frac{1}{\varepsilon \left( \theta c_2 (\alpha)  \log (N \delta_N) \right)^{1/2}} \left[ \sum_{j=1}^N \frac{\theta}{j} \vert \Psi_N (j) - 1 \vert   + \sum_{j=1}^N \mathbb{E} \vert W_j - a_{N,j} \vert \right] \\
	&= \frac{1}{\varepsilon \left( \theta c_2 (\alpha)  \log (N \delta_N) \right)^{1/2}} \times \mathcal{O}_\theta (1),
\end{align*}
hence $ Y_N^{I_N} - T_N \overset{\mathbb{P}}{\longrightarrow} 0$. 
In a similar way as in the proof of Theorem~\ref{th:TCLfix}, we have 
\[\sum_{j=1}^N \mathrm{Var} (V_{N,j}) = \frac{1}{c_2 (\alpha) \log (N \delta_N) } \sum_{j=1}^N  \frac{ (\omega_j^{(N)})^2}{j} \underset{N\to \infty}{\longrightarrow} 1\]
and
\begin{align*}
\sum_{j=1}^N \mathbb{E} \left( (V_{N,j})^2 \mathds{1}_{\vert V_{N,j} \vert > \varepsilon}  \right) &\leq \frac{1}{\theta c_2 (\alpha)  \log (N \delta_N) }  \sum_{j=1}^N \mathbb{E} \left( \left(W_j - \frac{\theta}{j} \right)^2 \mathds{1}_{\left\vert W_j - \frac{\theta}{j} \right\vert > \varepsilon \left( \theta c_2 (\alpha)  \log (N \delta_N) \right)^{1/2}}  \right) \\
&\underset{N\to \infty}{\longrightarrow} 0.
\end{align*}
Then $(T_N)$ converges in distribution to $\mathcal{N}(0,1)$. Slutsky's theorem ends the proof.
\end{proof}

\begin{remark}
For a finite number of mesoscopic arcs, say $m$ arcs, with shrinking speeds $\delta_N^{(k)}$, $1\leq k \leq m$, it is reasonable to expect that some asymptotic results still occur. Indeed, the only point to overcome in the proof is the existence of non-diagonal terms in the covariance matrices $D$ and $\widetilde{D}$, whose good candidates would be for $k\neq l$: 
\begin{equation}
D_{k,l}=\frac{1}{(c_2 (\alpha_k) c_2 (\alpha_l))^{1/2}} \lim_{N\to\infty} \frac{1}{\left( \log (N \delta_N^{(k)}) \log ( N \delta_N^{(l)} ) \right)^{1/2}} \sum_{j=1}^N \frac{\omega_{j,k}^{(N)}\omega_{j,l}^{(N)}}{j}
\end{equation}
and
\begin{equation}
\widetilde{D}_{k,l}=6 \lim_{N\to\infty} \frac{1}{\left( \log (N \delta_N^{(k)}) \log ( N \delta_N^{(l)} ) \right)^{1/2}} \sum_{j=1}^N \frac{H_{j,k,l}^{(N)}}{j},
\end{equation}
where 
\[ \left\{\begin{array}{l} 
\omega_{j,k}^{(N)} := \{j(\alpha_k + \delta_N^{(k)}) \} - \{ j\alpha_k \} \\
H_{j,k,l} = \frac{1}{2} ( h_j (\alpha_N^{(k)} - \beta_N^{(l)} )+ h_j (\beta_N^{(k)} - \alpha_N^{(l)} )  - h_j (\alpha_N^{(k)} - \alpha_N^{(l)} ) - h_j (\beta_N^{(k)} - \beta_N^{(l)} )),
\end{array}\right.
\]
$h_j (x) := \{jx \}(1 - \{jx\})$.
It is not clear these limits exist, since the formulas suggest a deep dependence on the way the $m$ arcs overlap when $N$ becomes large. 
\end{remark}

\section{Spacing between eigenvalues}
\label{sec:5}

For all $N\geq 1$, denote by $\mathcal{D}_N$ and $d_N$ (resp. $\widetilde{\mathcal{D}}_N$ and $\widetilde{d}_N$) the largest and the smallest spacings between two consecutive distinct eigenangles of a random element from $\mathfrak{S}_N$ (resp. $S^1 \wr \mathfrak{S}_N$), where the permutations are picked under Ewens measure of parameter $\theta>0$.

\subsection{Largest spacing between two consecutive distinct eigenvalues}

\begin{proposition}
The sequences of random variables $(n \mathcal{D}_n)_{n\geq 1}$, $(n \widetilde{\mathcal{D}}_n)_{n\geq 1}$, $(\frac{1}{n \mathcal{D}_n})_{n\geq 1}$ and $(\frac{1}{n \widetilde{\mathcal{D}}_n})_{n\geq 1}$ are tight.
\end{proposition}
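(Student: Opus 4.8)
The plan is to reduce the statement to two elementary geometric facts about the spectrum, together with one classical estimate on cycle lengths under Ewens' measure.

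First, the families $\bigl(\tfrac{1}{n\mathcal{D}_n}\bigr)$ and $\bigl(\tfrac{1}{n\widetilde{\mathcal{D}}_n}\bigr)$ are in fact \emph{deterministically} bounded, so nothing probabilistic is needed for them. For any outcome, the distinct eigenvalues of $M_n$ lie among the $j$-th roots of unity with $a_{n,j}\geq 1$, so their number $M_n$ satisfies $M_n\leq\sum_{j:\,a_{n,j}\geq 1}j\leq\sum_{j=1}^n j\,a_{n,j}=n$; the same bound holds for $\widetilde{M}_n$, since each $j$-cycle contributes exactly $j$ eigenvalues (the $j$-th roots of a fixed element of $S^1$). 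Since $M_n\geq 1$ points on the unit circle leave a largest gap of at least $1/M_n\geq 1/n$ (the $M_n$ gaps sum to $1$) and at most $1$, we get $1\leq n\mathcal{D}_n\leq n$ and $1\leq n\widetilde{\mathcal{D}}_n\leq n$ for every $n$; in particular $\tfrac{1}{n\mathcal{D}_n},\tfrac{1}{n\widetilde{\mathcal{D}}_n}\in(0,1]$, which is tightness.

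Second, for $(n\mathcal{D}_n)$ and $(n\widetilde{\mathcal{D}}_n)$ I would use the longest cycle. If $\sigma_n$ has a cycle of length $m$, then among the eigenvalues of $M_n$ (resp.\ $\widetilde{M}_n$) there are already $m$ points equally spaced by $1/m$ on the circle, and any further eigenvalue only subdivides a gap; hence $\mathcal{D}_n\leq 1/m$ and $\widetilde{\mathcal{D}}_n\leq 1/m$. Taking $m=L_n$, the length of the longest cycle of $\sigma_n$, gives $n\mathcal{D}_n\leq n/L_n$ and $n\widetilde{\mathcal{D}}_n\leq n/L_n$, so it suffices to prove tightness of $(n/L_n)$. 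This follows from a standard property of Ewens' measure (see \cite{arratia2003logarithmic}): either that $L_n/n$ converges in law to the largest component of a Poisson--Dirichlet$(\theta)$ random vector, which is almost surely positive, or, more elementarily, that the length $\ell_1$ of the cycle containing a fixed point satisfies $\mathbb{P}(\ell_1>\varepsilon n)\to(1-\varepsilon)^\theta$ as $n\to\infty$ (and $L_n\geq\ell_1$). Either way, given $\eta>0$ one can pick $\varepsilon>0$ with $\limsup_n\mathbb{P}(L_n\leq\varepsilon n)<\eta$; then $\mathbb{P}(n\mathcal{D}_n>1/\varepsilon)\leq\mathbb{P}(L_n\leq\varepsilon n)<\eta$ for all large $n$, while the finitely many remaining $n$ are absorbed using $n\mathcal{D}_n\leq n$. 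The argument for $n\widetilde{\mathcal{D}}_n$ is identical, since the equally-spaced structure of the eigenangles coming from a $j$-cycle is unaffected by the random rotations $z_i$.

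The only genuine input is the order-$n$ lower bound on the longest cycle, which is classical; everything else is bookkeeping, so I do not anticipate a real obstacle. The one point to keep in mind is the degenerate configuration $M_n=1$ (e.g.\ $\sigma_n=\mathrm{id}$), in which ``largest spacing'' must be read as the full turn $1$, consistently with all the bounds above.
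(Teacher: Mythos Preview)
Your proof is correct and follows essentially the same approach as the paper: pigeonhole gives the deterministic lower bound $\mathcal{D}_n\geq 1/n$ (hence tightness of the reciprocals), and the longest cycle gives $\mathcal{D}_n\leq 1/L_n$ (likewise for $\widetilde{\mathcal{D}}_n$), after which tightness of $n/L_n$ follows from the Poisson--Dirichlet limit of $L_n/n$. The only minor differences are your normalization of spacings to $[0,1]$ rather than $[0,2\pi]$ and your optional elementary alternative via the size-biased cycle length $\ell_1$, which the paper does not mention.
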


\begin{remark}
Informally, this proposition involves that $\mathcal{D}_n$ and $\widetilde{\mathcal{D}}_n$ have an order of magnitude of $\frac{1}{n}$.
\end{remark}

\begin{proof}
Let $n\geq 1$. It is easy to check that
\[\frac{2\pi}{Z_n} \leq \mathcal{D}_n	\leq \frac{2\pi}{L_{n,1}}\]
where $L_{n,1}$ denotes the largest cycle length of the corresponding permutation and $Z_n$ the number of distinct eigenvalues, using the pigeonhole principle for the first inequality. In particular, since $Z_n \leq n$,
\begin{equation}\label{eq:nDn}
2 \pi \leq n \mathcal{D}_n \leq \frac{2\pi}{\frac{1}{n} L_{n,1}}.
\end{equation}
Obviously, the left-hand side of \eqref{eq:nDn} provides the tightness of $(\frac{1}{n \mathcal{D}_n})_{n\geq 1}$.
Moreover, it is well-known (see \cite{arratia2003logarithmic}) that $\left(\frac{1}{n} L_{n,1}\right)$ converges in distribution to the first coordinate of a Poisson-Dirichlet random vector of parameter $\theta$, which is almost surely finite and positive. Using the continuous mapping theorem, it follows that $\left( \frac{2\pi}{\frac{1}{n} L_{n,1}} \right)$ converges in distribution, and thus this sequence is tight. We deduce
\[ \forall \varepsilon >0 , \quad \exists \eta_\varepsilon >0,  \quad \forall n \geq 1, \quad \mathbb{P} (n \mathcal{D}_n \leq \eta_\varepsilon) \geq  \mathbb{P} \left( \frac{2\pi}{\frac{1}{n} L_{n,1}} \leq \eta_\varepsilon \right) \geq 1-\varepsilon. \] 
Now, for a random element of the wreath product $S^1 \wr \mathfrak{S}_n$ related to the same permutation, the previous inequality holds, \emph{i.e} 
\begin{equation}\label{eq:nDtilden}
\frac{2\pi}{n} \leq \widetilde{\mathcal{D}}_n	\leq \frac{2\pi}{L_{n,1}}.
\end{equation}
Note that in this case the number of distinct eigenvalues is almost surely equal to $n$.
The same reasoning as above applied to $\widetilde{\mathcal{D}}_n$ gives the claim.
\end{proof}

\subsection{Smallest spacing between two consecutive distinct eigenvalues}

\begin{proposition}
The sequences of random variables $(n^2 d_n)_{n\geq 1}$, $(n^2 \widetilde{d}_n)_{n\geq 1}$, $(\frac{1}{n^2 d_n})_{n\geq 1}$ and $(\frac{1}{n^2 \widetilde{d}_n})_{n\geq 1}$ are tight.
\end{proposition}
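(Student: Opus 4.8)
The plan is to prove the four tightness statements by two complementary mechanisms: a lower bound (deterministic in the non-modified case, a conditional union bound in the modified case) keeping $n^2 d_n$ and $n^2\widetilde d_n$ away from $0$, and a ``two large cycles'' argument keeping them bounded above. Throughout one sets the spacing equal to $2\pi$ on the negligible event that there are fewer than two distinct eigenvalues. For the lower bound in the non-modified case, every eigenangle of $M_n$ divided by $2\pi$ equals $a/\ell$ for some cycle length $\ell\le n$, hence is a rational $p/q$ in lowest terms with $q\le n$; two distinct such rationals differ by at least $1/(qq')\ge 1/n^2$, so $n^2 d_n\ge 2\pi$ surely and $(1/(n^2 d_n))$ is tight.

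For the modified case I argue conditionally on $\sigma_n$. The eigenangles coming from a cycle of length $\ell$ form a random rotate of the subgroup $\tfrac1\ell\mathbb{Z}/\mathbb{Z}$, the rotations (the uniform phases) being independent across cycles; hence for two cycles of lengths $\ell,\ell'$ the minimal distance between the two corresponding ``combs'' equals $\mathrm{dist}(t,\mathbb{Z})/\mathrm{lcm}(\ell,\ell')$, where $t$ is an integer combination of two independent uniform phases and is therefore uniform on $[0,1)$; so that minimal distance is $\le u$ with conditional probability $\le 2u\,\mathrm{lcm}(\ell,\ell')\le 2u\,\ell\ell'$. Since gaps inside one comb are $\ge 2\pi/n$, for $u<1/n$ a union bound over pairs of cycles gives $\mathbb{P}(\widetilde d_n\le 2\pi u\mid\sigma_n)\le 2u\sum_{i<i'}\ell_i\ell_{i'}\le u(\sum_i\ell_i)^2=un^2$; taking $u=\eta/n^2$ with $\eta\in(0,1)$ yields $\mathbb{P}(n^2\widetilde d_n\le 2\pi\eta)\le\eta$ for every $n$, so $(1/(n^2\widetilde d_n))$ is tight.

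For the upper bounds, the key reduction is: if $\sigma_n$ has two cycles of lengths $j,k$, then by Bézout there are $a\in\{0,\dots,j-1\}$, $b\in\{0,\dots,k-1\}$ with $ak-bj=\gcd(j,k)$, so $M_n$ has the two distinct eigenangles $2\pi a/j$, $2\pi b/k$ at circular distance $2\pi/\mathrm{lcm}(j,k)$ (and, by the coset computation above, $\widetilde M_n$ has two distinct eigenangles at distance $\le\pi/\mathrm{lcm}(j,k)$); hence $d_n,\widetilde d_n\le 2\pi/\mathrm{lcm}(j,k)$. It therefore suffices to prove that for every $\varepsilon>0$ there are $\varepsilon_0>0$ and $D\ge1$ such that, with probability $\ge1-\varepsilon$ for all large $n$, the two largest cycles satisfy $L_{n,1}\ge L_{n,2}\ge\varepsilon_0 n$ and $\gcd(L_{n,1},L_{n,2})\le D$: then $\mathrm{lcm}(L_{n,1},L_{n,2})\ge\varepsilon_0^2 n^2/D$, whence $n^2 d_n,\,n^2\widetilde d_n\le 2\pi D/\varepsilon_0^2$. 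The event $\{L_{n,2}\ge\varepsilon_0 n\}$ has probability close to $1$ for $\varepsilon_0$ small, since $(L_{n,1}/n,L_{n,2}/n)$ converges in law to the first two coordinates of a Poisson--Dirichlet$(\theta)$ vector, which are almost surely positive (these limit facts, and the Feller coupling of Lemma~\ref{lem:majPoisson}, are the tools here).

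The only genuinely non-routine point — the main obstacle — is the arithmetic genericity $\mathbb{P}(\gcd(L_{n,1},L_{n,2})>D)<\varepsilon$, uniformly in $n$, for $D$ large. I would bound it by $\sum_{d>D}\mathbb{P}(d\mid L_{n,1}\text{ and }d\mid L_{n,2})$ and show that, for each fixed $d$, $\mathbb{P}(d\mid L_{n,1},\ d\mid L_{n,2})=O_{\varepsilon_0}(1/d^2)$ as $n\to\infty$. This rests on a local description of the joint law of $(L_{n,1},L_{n,2})$ at scale $n$: one has $\mathbb{P}(L_{n,1}=\ell)\sim\frac\theta\ell(1-\ell/n)^{\theta-1}$ for $\ell>n/2$, with a continuous profile on the whole macroscopic range and a matching two-variable estimate for the pair — obtainable, alternatively, from the self-similarity of the Ewens measure (conditionally on $L_{n,1}=j$ the complement is Ewens on $n-j$ points), which reduces the pair estimate to equidistribution of a single large cycle length modulo $d$. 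Summing $1/\ell$ over the residue class $d\mid\ell$ in a window of length $\asymp n\gg d$ then produces a factor $1/d$, and $1/d^2$ for the pair. Granting this, $\sum_{d>D}O_{\varepsilon_0}(1/d^2)=O_{\varepsilon_0}(1/D)$ is made $<\varepsilon$, and combining the three estimates (large-cycle event, genericity, and the negligibility of the finitely many small $n$) gives all four tightness statements.
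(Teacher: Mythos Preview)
Your lower-bound arguments --- the deterministic $d_n\ge 2\pi/n^2$ and the conditional union bound for $\widetilde d_n$ --- coincide with the paper's. The substantive difference is in the upper bound: you work with the two \emph{largest} cycle lengths $(L_{n,1},L_{n,2})$, whereas the paper uses the two \emph{age-ordered} lengths $(A_{n,1},A_{n,2})$ (the cycle containing $1$, then the cycle containing the smallest unused point). This is not cosmetic. The joint law
\[
\mathbb{P}(A_{n,1}=a_1,\,A_{n,2}=a_2)=\frac{\theta^2\,\Psi_n(a_1+a_2)}{n(n-a_1)}
\]
is explicit, so the bound on $\mathbb{P}(\gcd(A_{n,1},A_{n,2})\ge A)$ becomes a direct double sum over multiples of each $j\ge A$, which the paper shows is $O_\theta(1/A)$; combined with the GEM limit for $(A_{n,1}/n,\,A_{n,2}/n)$ this gives tightness of $(n^2 d_n)$, and then of $(n^2\widetilde d_n)$ for free via $\widetilde d_n\le d_n$.

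Your route through $(L_{n,1},L_{n,2})$ hits exactly the obstacle you flag, and the proposed resolution contains a slip: the self-similarity claim ``conditionally on $L_{n,1}=j$ the complement is Ewens on $n-j$ points'' is \emph{false} for $j\le n/2$ --- the complement is Ewens only after further conditioning on having no cycle of length exceeding $j$, and on your event $\{L_{n,2}\ge\varepsilon_0 n\}$ you are not restricted to $L_{n,1}>n/2$. The exact self-similarity you want is carried by $A_{n,1}$, not $L_{n,1}$. Likewise, a workable local law for the pair $(L_{n,1},L_{n,2})$ is considerably more delicate than the one-line formula above for $(A_{n,1},A_{n,2})$. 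So the argument as written has a genuine gap at precisely the point you identify as the main obstacle; switching to age-ordered cycles --- which is what the paper does --- closes it with a short explicit computation.
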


\begin{remark}
Informally, this proposition involves that $d_n$ and $\widetilde{d}_n$ have an order of magnitude of $\frac{1}{n^2}$.
\end{remark}

\begin{proof}
Let $(a_{n,1} , \cdots , a_{n,n})$ be the cycle structure and $(A_{n,1} , \cdots , A_{n,n})$ be the \\ \emph{age-ordered list of cycle lengths} (\emph{i.e} the vector of cycle lengths in order of appearance, that is to say the increasing order following the lowest element of each cycle) of the random $n$-permutation. \\
The smallest spacing $d_n$ can be formulated as
\begin{equation}
d_n := \frac{2\pi}{\sup \{ \mathrm{lcm} (k,l) \  : \ 1\leq k , l \leq n , \quad a_{n,k} , a_{n,l} \geq 1  \}  } 
\end{equation}
On the one hand, trivially $d_n \geq \frac{2\pi}{n^2}$ for all $n$, then $(\frac{1}{n^2 d_n})_{n\geq 1}$ is tight. \\
On the other hand, 
\begin{equation}\label{eq:ineqdn}
d_n  \leq \frac{2\pi}{\mathrm{lcm} (A_{n,1}, A_{n,2})} = \frac{2\pi \cdot \gcd (A_{n,1}, A_{n,2})}{A_{n,1} A_{n,2}}.
\end{equation}
We are going to show that $(\gcd (A_{n,1}, A_{n,2}))_{n\geq 1}$ is tight. \\
Let $A$ be a positive integer. 
\begin{align*}
\mathbb{P} (\gcd (A_{n,1}, A_{n,2}) \geq A) &= \sum_{j=A}^n \mathbb{P} (\gcd (A_{n,1}, A_{n,2}) = j) \\
	&\leq \sum_{j=A}^n \mathbb{P} (j \text{ is a common divisor of } A_{n,1} \text{ and } A_{n,2}) \\
	&= \sum_{j=A}^n \sum_{\substack{1\leq k,l \\ k+l \leq \lfloor \frac{n}{j} \rfloor }} \mathbb{P} ((A_{n,1} , A_{n,2})=(jk,jl)).
\end{align*}
From a basic result in \cite{arratia2003logarithmic} we have, for all $a_1,a_2 \geq 1$, 
\[\mathbb{P} (A_{n,1} = a_1 , A_{n,2} = a_2 ) = \frac{\theta^2 \Psi_n (a_1+a_2)}{n (n-a_1)}.\]
Thus, for all $j\geq 1$,
\[ \sum_{\substack{1\leq k,l \\ k+l \leq \lfloor \frac{n}{j} \rfloor }} \mathbb{P} ((A_{n,1} , A_{n,2})=(jk,jl)) = \frac{\theta^2}{n} \sum_{k=1}^{\lfloor \frac{n}{j} \rfloor -1} \frac{1}{n-jk} \sum_{l=1}^{\lfloor \frac{n}{j} \rfloor  - k} \Psi_n (j(k+l)). \] 
\underline{If $\theta \geq 1$}, then $\Psi_n (m) \leq 1$ for all $1\leq m\leq n$, and then
\[\frac{\theta^2}{n} \sum_{k=1}^{\lfloor \frac{n}{j} \rfloor -1} \frac{1}{n-jk} \sum_{l=1}^{\lfloor \frac{n}{j} \rfloor  - k} \Psi_n (j(k+l)) \leq \frac{\theta^2}{n} \sum_{k=1}^{\lfloor \frac{n}{j} \rfloor -1} \frac{1}{n-jk} \left(\left\lfloor \frac{n}{j} \right\rfloor  - k \right) \leq \frac{\theta^2}{j^2}.\]
\underline{If $\theta <1$}, then $\Psi_n (m) \leq \Psi_n (m+1) $ for all $1\leq m\leq n-1$, and then for all $j\geq 1$ we have
\begin{align*}
\sum_{l=1}^{\lfloor \frac{n}{j} \rfloor -k-1} \Psi_n (j(k+l)) &\leq \frac{1}{j} \sum_{l=1}^{\lfloor \frac{n}{j} \rfloor  - k-1} \sum_{p=jl}^{j(l+1)-1}  \Psi_{n} (jk + p) \\
	&\leq \Psi_n (jk) \frac{1}{j} \sum_{p=1}^{n-jk} \Psi_{n-jk} (p) \\
	&= \Psi_n (jk) \frac{n-jk}{j\theta},
\end{align*}
using Lemma~\ref{lem:EqPsi} for the latest equality, and similarly
\begin{align*}
\sum_{k=1}^{\lfloor \frac{n}{j} \rfloor -1} \Psi_n (jk) &\leq \frac{1}{j} \sum_{k=1}^{\lfloor \frac{n}{j} \rfloor  -1} \sum_{p=jk}^{j(k+1)-1}  \Psi_{n} (p) \leq \frac{n}{j\theta}.
\end{align*}
Thus, 
\begin{align*}
\frac{\theta^2}{n} \sum_{k=1}^{\lfloor \frac{n}{j} \rfloor -1} \frac{1}{n-jk} \sum_{l=1}^{\lfloor \frac{n}{j} \rfloor  - k} \Psi_n (j(k+l)) &\leq \frac{\theta^2}{n} \left[ \left( \frac{1}{j\theta} \times \frac{n}{j\theta } \right) + \sum_{k=1}^{\lfloor \frac{n}{j} \rfloor -1}  \frac{1}{n-jk} \Psi_n \left(j \left\lfloor \frac{n}{j} \right\rfloor \right) \right] \\
&\leq \frac{1}{j^2} + \frac{\theta^2}{n} \Psi_n (n) \sum_{k=1}^{\lfloor \frac{n}{j} \rfloor -1} \frac{1}{n-jk}   
\end{align*}
with, using Holder inequality,
\begin{align*}
\sum_{k=1}^{\lfloor \frac{n}{j} \rfloor -1} \frac{1}{n-jk} \leq \frac{1}{j} \sum_{k=0}^{\lfloor \frac{n}{j} \rfloor -1} \frac{1}{\left\lfloor \frac{n}{j} \right\rfloor - k} =  \frac{1}{j} \sum_{k=1}^{\lfloor \frac{n}{j} \rfloor } \frac{1}{k} 
	&\leq \frac{1}{j} \left( \sum_{k=1}^{\lfloor \frac{n}{j} \rfloor } \frac{1}{k^{1/(1-\theta)}}   \right)^{1-\theta}  \left( \left\lfloor \frac{n}{j} \right\rfloor  \right)^\theta  \\
	&= \mathcal{O}_\theta \left( \frac{n^\theta}{j^{1+\theta}} \right) .
\end{align*}
Consequently, we deduce that for all $\theta >0$,
\begin{align*}
\mathbb{P} (\gcd (A_{n,1}, A_{n,2}) \geq A) &\leq (\theta^2 +1) \sum_{j=A}^n \frac{1}{j^2} + \mathcal{O}_\theta  \left( \sum_{j=A}^n \frac{\Psi_n (n)}{n^{1-\theta} j^{1+\theta}} \right) \\
	&\leq (\theta^2 +1) \sum_{j=A}^{+\infty} \frac{1}{j^2} + \mathcal{O}_\theta \left( \sum_{j=A}^{+\infty} \frac{1}{j^{1+\theta}} \right) \\
	&\leq \varepsilon 
\end{align*}
for all fixed real numbers $\varepsilon >0$ and $A$ large enough depending on $\theta$ and $\varepsilon$. \\
Finally, it is well-known (see again \cite{arratia2003logarithmic}) that as $n\to \infty$
\begin{equation}
\frac{1}{n} (A_{n,1} , A_{n,2} , \cdots , A_{n,n}) \overset{d}{\longrightarrow} (G_1 , G_2 , \cdots )
\end{equation}
where $(G_1, G_2 ,\cdots )$ has GEM$(\theta)$ distribution. Note that $G_1$ and $G_2$ are almost surely finite and positive. Since from \eqref{eq:ineqdn}
\[2\pi \leq n^2 d_n \leq \frac{2\pi \gcd (A_{n,1}, A_{n,2})}{\frac{1}{n} A_{n,1} \cdot \frac{1}{n} A_{n,2}},\]
then the continuous mapping theorem gives the tightness of $(n^2d_n)_{n\geq 1}$.

Now, for a random element of the wreath product $S^1 \wr \mathfrak{S}_n$ related to the same permutation, the spacing is lower than the one for the permutation. Indeed, for all pair of cycle lengths $(p,q)$, 
\begin{itemize}
\item if $p=q$, then applying whatever rotations on the corresponding sets of eigenvalues (the $p$-th roots of unity) decreases the distance between two distinct eigenvalues.
\item if $p\neq q$, we can suppose without loss of generality that $p$ and $q$ are coprime numbers (since the set of all eigenangles is periodic of period $2\pi/\gcd (p,q)$). Let $s\in (0,1)$. It is easy to check that the smallest spacing between two arguments of points which are respectively taken from $\left\{ \mathrm{e}^{ \frac{2i\pi k }{p}} , 0\leq k\leq p-1   \right\}$ and $\left\{ \mathrm{e}^{ 2i\pi (\frac{l}{q} + s)} , 0\leq l\leq q-1   \right\}$ divided by $2\pi$ is equal to
\begin{align*}
\min_{\substack{k \in \mathbb{Z} \\ l\in \mathbb{Z}}} \left\vert \frac{l}{q} + s - \frac{k}{p}  \right\vert = \frac{1}{pq}  \min_{\substack{k \in \mathbb{Z} \\ l\in \mathbb{Z}}} \left\vert lp -kq + spq  \right\vert \\
\end{align*}
and by Bézout we know that $\{lp - kq : k,l\in \mathbb{Z} \} = \mathbb{Z}$, thus
\begin{align*}
\min_{\substack{k \in \mathbb{Z} \\ l\in \mathbb{Z}}} \left\vert lp -kq + spq  \right\vert =   \min_{n\in \mathbb{Z}}  \vert n +spq \vert = \min (\{ spq \} , 1 - \{spq\}) \leq 1.
\end{align*}
\end{itemize}
Consequently we have $\widetilde{d}_n \leq d_n$ and we deduce the tightness of $(n^2 \widetilde{d}_n)_{n\geq 1}$.

It remains to show that $(\frac{1}{n^2 \widetilde{d}_n})_{n\geq 1}$ is tight. For this purpose, denote $E_n$ the ensemble of couples of cycle lengths of the considered randomly chosen $n$-permutation. First observe that for all uniform random variables $U$ on $[0,1]$ and all non-zero integer $n$, $\{nU\}$ is uniform on $[0,1]$ and then $\min (\{nU\} , 1- \{nU \} )$ is uniform on $[0,1/2]$. Hence, using what we did above,  
\begin{equation}\label{eq:ineqdtilden}
\widetilde{d}_n \geq 2\pi \min_{(l_1, l_2) \in E_n} \frac{1}{l_1 l_2} V_{l_1,l_2} 
\end{equation}
where $V_{j,k}$ are uniform random variables on $[0, 1/2]$ (which are not independent when the indices overlap). \\
Now, conditionally to $E_n=E$ where $E$ is a possible ensemble of couples of lengths for the picked $n$-permutation, we have for all positive real numbers $t< \frac{1}{2}$,
\begin{align*}
\mathbb{P} \left( \min_{(l_1, l_2) \in E_n} \frac{1}{l_1 l_2} V_{l_1,l_2} \leq \frac{t}{n^2} \ \vert \ E_n = E \right) &\leq \sum_{(l_1, l_2) \in E} \mathbb{P} \left( V_{l_1,l_2} \leq \frac{t l_1 l_2}{n^2}  \right) \\
&\leq \sum_{(l_1, l_2) \in E} 2 \frac{t l_1 l_2}{n^2} = 2t
\end{align*}
using the union bound for the first inequality. Thus, 
\[\mathbb{P} \left( \min_{(l_1, l_2) \in E_n} \frac{1}{l_1 l_2} V_{l_1,l_2} \leq \frac{t}{n^2} \right) \leq 2t \]
and, from \eqref{eq:ineqdtilden}, 
\[\mathbb{P} \left( \frac{1}{n^2 \widetilde{d}_n} > A \right) \leq \frac{1}{\pi A}\]
which gives the required tightness.
\end{proof}

\section*{Appendix A}

We give a proof of Lemma~\ref{lem:majPoisson} using Cesàro means. Our proof provide a slightly better upper-bound than the one given in \cite{arratia2003logarithmic}. We begin with the following lemma:

\begin{lemma}\label{lem:Psi}
For all $1\leq j\leq n$, 
\begin{equation}
\sum_{p=j}^{n-1} \frac{A_{p-j}^{\theta -1}}{p A_p^\theta} = \Psi_n (j) \left( \frac{1}{j} - \frac{1}{n} \right).
\end{equation}
\end{lemma}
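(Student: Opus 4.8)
The plan is to prove the identity by induction on $n \geq j$, exploiting a telescoping relation that is already hidden in the proof of Proposition~\ref{prop:log}. Fix $j \geq 1$ and set $a_p := \frac{A_{p-j}^{\theta -1}}{p\, A_p^{\theta}}$ for $p \geq j$, so that the claim becomes $\sum_{p=j}^{n-1} a_p = \Psi_n(j)\left(\frac{1}{j} - \frac{1}{n}\right)$ for every $n \geq j$. The base case $n = j$ is immediate: the left-hand side is an empty sum and the right-hand side is $\Psi_j(j)\cdot 0 = 0$.

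For the inductive step, I would assume the identity at $n-1$ with $n \geq j+1$. Since $\sum_{p=j}^{n-1} a_p = \sum_{p=j}^{n-2} a_p + a_{n-1}$, it is enough to check that
\[
\Psi_n(j)\left(\tfrac{1}{j} - \tfrac{1}{n}\right) - \Psi_{n-1}(j)\left(\tfrac{1}{j} - \tfrac{1}{n-1}\right) = a_{n-1}.
\]
First I would use the elementary recursion $\Psi_n(j)\left[\frac{1}{j} - \frac{1}{n}\right] = \Psi_{n-1}(j)\left[\frac{1}{j} - \frac{1}{\theta + n - 1}\right]$, valid for $1 \leq j \leq n-1$ and already invoked in the proof of Proposition~\ref{prop:log}, to rewrite the left-hand side as $\Psi_{n-1}(j)\left(\frac{1}{n-1} - \frac{1}{\theta + n - 1}\right) = \Psi_{n-1}(j)\,\frac{\theta}{(n-1)(\theta + n - 1)}$. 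Then I would substitute the identity $\Psi_{n-1}(j) = \frac{\theta + n - 1}{\theta}\,\frac{A_{n-1-j}^{\theta - 1}}{A_{n-1}^{\theta}}$ noted in the proof of Lemma~\ref{lem:EqPsi}; the factors $\theta + n - 1$ and $\theta$ cancel, leaving exactly $\frac{A_{n-1-j}^{\theta - 1}}{(n-1) A_{n-1}^{\theta}} = a_{n-1}$. Combining this with the induction hypothesis closes the induction.

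There is no genuine obstacle here; the only points requiring care are the range of validity of the recursion for $\Psi$ (it needs $j \leq n-1$, which is precisely the regime of the inductive step) and the empty-sum convention in the base case. A generating-function alternative is possible --- writing $A_n^\theta = \sum_{i=0}^n A_i^{\theta-1}$ and using $\sum_k A_k^\theta x^k = (1-x)^{-\theta-1}$, one could turn the $1/p$ factor into an integral --- but it is heavier than the telescoping argument above and I would not pursue it.
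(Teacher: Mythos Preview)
Your induction is correct and complete. The key recursion $\Psi_n(j)\bigl[\tfrac{1}{j}-\tfrac{1}{n}\bigr]=\Psi_{n-1}(j)\bigl[\tfrac{1}{j}-\tfrac{1}{\theta+n-1}\bigr]$ together with $\Psi_{n-1}(j)=\tfrac{\theta+n-1}{\theta}\,\tfrac{A_{n-1-j}^{\theta-1}}{A_{n-1}^{\theta}}$ indeed gives exactly $a_{n-1}$ for the increment, and the base case $n=j$ is handled properly.

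The paper takes a slightly more indirect route: it recycles the identity $t_n=(Ms)_n$ already established by induction in the proof of Proposition~\ref{prop:log}, expands both $J_n(Ms)_n$ and $J_n t_n$ as linear forms in the arbitrary weights $w_j$, and then reads off the desired relation by identifying the coefficient of each $w_j$. Your argument unwinds this packaging and performs the telescoping directly on the summand, which is cleaner and self-contained for this particular lemma. The paper's version has the minor advantage of not repeating any computation (it is literally a corollary of work already done), while yours makes the mechanism transparent without requiring the reader to revisit an earlier proof. Both rest on the same elementary recursion for $\Psi$, so the mathematical content is equivalent.
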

\begin{proof}
With the same notation as in the proof of Proposition~\ref{prop:log}, we observe that
\begin{align*}
J_n(Ms)_n = \sum_{p=1}^{n-1} J_n M_{n,p} s_p + s_n &=  \sum_{p=1}^{n-1} \frac{\theta}{\theta +p} \frac{1}{p} \sum_{j=1}^{p} \Psi_p (j) w_j + \frac{1}{n} \sum_{j=1}^n \Psi_n (j) w_j \\
	&= \sum_{j=1}^{n-1} w_j \left[ \sum_{p=j}^{n-1} \frac{\theta}{\theta +p} \frac{\Psi_p (j)}{p} + \frac{\Psi_n (j)}{n} \right] +  \frac{\Psi_n (n)}{n} w_n
\end{align*}
and
\[J_n t_n = \sum_{j=1}^{n-1} w_j \frac{\Psi_n (j)}{j} + \frac{\Psi_n (n)}{n} w_n.\]
Therefore, by identification, for all $j\in [\![ 1 , n-1 ]\!]$, 
\[\frac{\Psi_n (j)}{j} = \sum_{p=j}^{n-1} \frac{\theta}{\theta +p} \frac{\Psi_p (j)}{p} + \frac{\Psi_n (j)}{n}.  \]
Finally, using the definition of Cesàro numbers it is clear that $\frac{\theta}{\theta +p} \frac{\Psi_p (j)}{p} = \frac{A_{p-j}^{\theta -1}}{p A_p^\theta}$ for all $1\leq j \leq p$.
\end{proof}

We are ready to prove Lemma~\ref{lem:majPoisson}.

We introduce independent Bernoulli variables $\xi_r$, $r\geq 1$, defined as
\[\mathbb{P} (\xi_r = 1)= \frac{\theta}{\theta + r -1} \quad , \quad \mathbb{P} (\xi_r =0) = \frac{r-1}{\theta+r-1}.\]
The Feller Coupling characterizes the variables $a_{n,j}$ and $W_j$ on the same probability space in function of the $\xi_r$, by the following equalities: For all $1 \leq j \leq n$,   
\begin{align*}
a_{n,j} &= \# \{j-\text{spacings between two consecutive $1$ in the word } (1 \ \xi_2 \ \cdots \ \xi_n \ 1) \} \\
 	&= \sum_{k=1}^{n-j} \xi_k (1-\xi_{k+1}) \cdots (1-\xi_{k+j-1})\xi_{k+j} + \xi_{n-j+1} (1- \xi_{n-j+1} ) \cdots (1- \xi_n ),
\end{align*}
and for all $j \in \mathbb{N}^*$, 
\[W_j = \sum_{k=1}^{+\infty} \xi_k (1-\xi_{k+1}) \cdots (1-\xi_{k+j-1})\xi_{k+j}.\]  
To begin with, it is easy to notice that 
\[  \left| a_{n,j} - W_j \right| \leq \mathds{1}_{J_n=j} + W_{j,n} + \mathds{1}_{J_n+K_n = j+1}   \]
where \[\left\{\begin{array}{l}
 J_n := \min \{ k\geq 1 : \ \xi_{n-k+1} =1 \} \\ K_n:= \min \{ k\geq 1 : \ \xi_{n+k} =1\} \\ W_{j,n} := \sum_{k=n+1}^{+\infty} \xi_k (1-\xi_{k+1}) \cdots (1-\xi_{k+j-1})\xi_{k+j}
\end{array} \right.\]
therefore \[ \mathbb{E} \left| a_{n,j} - W_j \right| \leq \mathbb{P} (J_n=j ) + \mathbb{E} (W_{j,n}) + \mathbb{P} (J_n+K_n = j+1 ). \]
We look separately at the three right-hand side terms in this inequality.
 \[\mathbb{P} (J_n = j) = \frac{\theta }{\theta + n - j} \frac{n-j+1}{\theta + n -j +1} \cdots \frac{n-1}{\theta + n-1} = \frac{\theta}{n} \Psi_n (j).\]
Hence, $\sum\limits_{j=1}^n \mathbb{P} (J_n = j) = \frac{\theta}{n} \sum\limits_{j=1}^n \Psi_n (j) = 1$. 
\begin{align*}
\mathbb{E} (W_{j,n}) &= \sum\limits_{k=n+1}^{+\infty} \frac{\theta}{\theta + k -1} \frac{k}{\theta + k} \cdots \frac{k+j-2}{\theta + k+j-2} \frac{\theta}{\theta + k +j -1} \\
	&= \sum\limits_{p=n+j+1}^{+\infty} \frac{\theta}{\theta + p-j -1} \frac{p-j}{\theta + p-j} \cdots \frac{p-2}{\theta + p-2} \frac{\theta}{\theta + p -1}.
\end{align*}
\begin{align*}
&\mathbb{P} (J_n+K_n = j+1 ) \\
&\qquad = \sum\limits_{k=1}^j \frac{\theta}{\theta + n +k-j-1} \frac{n+k-j}{\theta + n+k-j} \cdots \frac{n+k-2}{\theta + n+k-2} \frac{\theta}{\theta + n + k-1} \\
	&\qquad = \sum\limits_{p=n+1}^{n+j} \frac{\theta}{\theta + p-j -1} \frac{p-j}{\theta + p-j} \cdots \frac{p-2}{\theta + p-2} \frac{\theta}{\theta + p -1}.
\end{align*}
In particular, 
\begin{align*}
&\mathbb{E} (W_{j,n}) + \mathbb{P} (J_n+K_n = j+1 ) \\
&\qquad = \sum\limits_{p=n+1}^{+\infty} \frac{\theta}{\theta + p-j -1} \frac{p-j}{\theta + p-j} \cdots \frac{p-2}{\theta + p-2} \frac{\theta}{\theta + p -1} \\
	&\qquad= \theta \sum_{p=n}^{+\infty} \frac{1}{p} \frac{A_{p-j}^{\theta -1}}{A_p^\theta} = \theta \left[   \sum_{p=j}^{+\infty} \frac{1}{p} \frac{A_{p-j}^{\theta -1}}{A_p^\theta} -  \sum_{p=j}^{n-1} \frac{1}{p} \frac{A_{p-j}^{\theta -1}}{A_p^\theta} \right].
\end{align*}
Since, by Lemma~\ref{lem:Psi}, \[\sum_{p=j}^{n-1} \frac{1}{p} \frac{A_{p-j}^{\theta -1}}{A_p^\theta} = \Psi_n (j) \left( \frac{1}{j} - \frac{1}{n} \right) \underset{n\to \infty}{\longrightarrow} \frac{1}{j} \]
it follows \[ \mathbb{E} (W_{j,n}) + \mathbb{P} (J_n+K_n = j+1 ) = \theta \left[ \frac{1}{j} - \Psi_n (j) \left( \frac{1}{j} - \frac{1}{n} \right) \right]. \]
Then,
\begin{align*}
\sum_{j=1}^n \mathbb{E} (W_{j,n}) + \mathbb{P} (J_n+K_n = j+1 ) &= \theta \sum_{j=1}^n \frac{1}{j} - \theta \sum_{j=1}^n \frac{\Psi_n (j)}{j} + \frac{\theta}{n} \sum_{j=1}^n \Psi_n (j) \\
	&= \theta \sum_{j=1}^n \frac{1}{j} - \theta \sum_{j=1}^n \frac{1}{\theta + j -1} \ + \ 1 \\
	&= \theta \sum_{j=1}^n \frac{\theta -1}{j(\theta + j -1)} \ + \  1
\end{align*}
using Lemma~\ref{lem:EqPsi} for the second equality. We deduce 
\begin{align}
\begin{split}
\mathbb{E} \left( \sum_{j=1}^n  \left| a_{n,j} - W_j \right|  \right) &\leq 2 + \theta (\theta -1) \sum_{j=1}^n \frac{1}{j(\theta + j-1)} \\
	&\leq 2 + \theta (\gamma + \psi (\theta))
\end{split}
\end{align}
where $\gamma$ is the Euler-Mascheroni constant, and $\psi$ is the digamma function. As a particular consequence,
\begin{equation}
\limsup\limits_{\theta \to 0^+} \ \sup\limits_{n \geq 1} \mathbb{E} \left( \sum_{j=1}^n  \left| a_{n,j} - W_j \right|  \right) \leq 1.
\end{equation}

\section*{Appendix B}

The following proposition gives the whole possible values for the constant $c_2$ appearing in Proposition~\ref{prop:EspVarX}, in function of $\alpha$ and $\beta$.

\begin{proposition}
Let $p, r$ be integers and $q,s$ positive integers with $p$ and $q$ relatively prime, $r$ and $s$ relatively prime, $r\neq 0$. Then
\[c_2 = \left\{\begin{array}{cl}
 \frac{1}{6} & \text{if $\alpha$ and $\beta$ are irrational and} \\
	&\text{ linearly independent over $\mathbb{Q}$} \\
 \frac{1}{6} + \frac{1}{6q^2} & \text{if $ \alpha = \frac{p}{q}$ and $\beta$ is irrational}  \\
 \frac{1}{6} + \frac{1}{6s^2} & \text{if $\alpha$ is irrational and $\beta = \frac{r}{s}$} \\
 \frac{(2q-1)(q-1)}{6q^2} + \frac{(2s-1)(s-1)}{6s^2} - \frac{2}{qs} \sum_{j=1}^{qs} \lbrace \frac{jp}{q} \rbrace \lbrace \frac{jr}{s} \rbrace & \text{if $\alpha = \frac{p}{q}$ and $\beta = \frac{r}{s}$} \\
  \frac{1}{6} - \frac{\gcd (s,q)^2 }{6srq^2} & \text{if $\alpha$ is irrational and} \\
  	&\text{ $\beta = \frac{p}{q}+\frac{r}{s}\alpha $}.
 \end{array}\right. \]
\end{proposition}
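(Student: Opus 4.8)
The plan is to identify $c_2$ with a Weyl-type average and evaluate it case by case. Unwinding the proof of Proposition~\ref{prop:EspVarX} -- discard the off-diagonal double sum (bounded, by Lemma~\ref{lem:borne}) and apply Propositions~\ref{prop:log} and~\ref{prop:cesaro} to the diagonal sum $\theta\sum_j\omega_j^2\Psi_N(j)/j$ -- one obtains
\[
c_2=\lim_{N\to\infty}\frac1N\sum_{j=1}^N\bigl(\{j\beta\}-\{j\alpha\}\bigr)^2=\ell_2(\beta)-2\,\ell_\times+\ell_2(\alpha),
\]
where $\ell_2(\gamma):=\lim_N\frac1N\sum_{j\le N}\{j\gamma\}^2$ and $\ell_\times:=\lim_N\frac1N\sum_{j\le N}\{j\alpha\}\{j\beta\}$ (existence of these Cesàro limits is Lemma~\ref{lem:Hcesaro}). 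The one-variable averages are already recorded in the proof of the asymptotics of $\mathrm{Var}(\widetilde X_N^I)$: $\ell_2(\gamma)=\tfrac13$ for $\gamma$ irrational, and $\ell_2(p/q)=\tfrac{(2q-1)(q-1)}{6q^2}$ for $\gcd(p,q)=1$. Thus the whole proposition reduces to computing $\ell_\times$ in the five regimes.

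For the four easy cases $\ell_\times$ comes from the joint behaviour of $(\{j\alpha\},\{j\beta\})$ on the torus. If $\alpha,\beta$ are irrational and $\mathbb{Q}$-independent, then $(\{j\alpha\},\{j\beta\})$ equidistributes on $[0,1]^2$, so $\ell_\times=\int_0^1\!\!\int_0^1 xy\,dx\,dy=\tfrac14$ and $c_2=\tfrac13+\tfrac13-\tfrac12=\tfrac16$. If $\alpha=p/q$ is rational and $\beta$ is irrational, then on each residue class $j\equiv a\pmod q$ the sequence $\{j\beta\}$ stays equidistributed, so $\ell_\times=\frac1q\sum_{a=0}^{q-1}\{ap/q\}\int_0^1 y\,dy=\tfrac12\cdot\tfrac{q-1}{2q}$ (using $\gcd(p,q)=1$), and a one-line simplification gives $c_2=\tfrac16+\tfrac1{6q^2}$; the case $\alpha$ irrational, $\beta=r/s$ is symmetric. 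If $\alpha=p/q$ and $\beta=r/s$ are both rational, the summand is $qs$-periodic, so $\ell_\times=\frac1{qs}\sum_{j=1}^{qs}\{jp/q\}\{jr/s\}$, which has no closed form in general and produces the stated value of $c_2$.

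The substantive case is the last: $\beta=\tfrac pq+\tfrac rs\alpha$ with $\alpha$ irrational and $r\neq0$ (so $\beta$ is irrational too, $\ell_2(\alpha)=\ell_2(\beta)=\tfrac13$). I would write $j=qs\,m+t$ with $0\le t<qs$ and verify, using $qsm\beta=smp+qrm\alpha\equiv qrm\alpha\pmod 1$, the congruences
\[
\{j\alpha\}=\bigl\{qs\,\theta_m+\{t\alpha\}\bigr\},\qquad \{j\beta\}=\bigl\{qr\,\theta_m+\{t\beta\}\bigr\},\qquad \theta_m:=\{m\alpha\},
\]
the point being that the only remaining irrational is carried by $\theta_m$, which equidistributes on $[0,1)$ as $m\to\infty$ since $\alpha\notin\mathbb{Q}$. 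Hence $\ell_\times=\frac1{qs}\sum_{t=0}^{qs-1}\int_0^1\bigl\{qs\,\theta+\{t\alpha\}\bigr\}\bigl\{qr\,\theta+\{t\beta\}\bigr\}\,d\theta$. The inner integral is computed by Fourier expansion of $x\mapsto\{x\}-\tfrac12$: only the resonant modes $(n,n')\in\mathbb{Z}(r,-s)$ survive the $\theta$-integration, and resumming via $\sum_{\lambda\ge1}\lambda^{-2}\cos(2\pi\lambda x)=\pi^2(\{x\}^2-\{x\}+\tfrac16)$ gives
\[
\int_0^1\bigl\{qs\,\theta+\{t\alpha\}\bigr\}\bigl\{qr\,\theta+\{t\beta\}\bigr\}\,d\theta=\frac14+\frac1{12rs}-\frac1{2rs}\,\{-tsp/q\}\bigl(1-\{-tsp/q\}\bigr),
\]
where I used $r\{t\alpha\}-s\{t\beta\}\equiv t(r\alpha-s\beta)=-tsp/q\pmod 1$. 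Averaging over $t$: since $\gcd(sp,q)=\gcd(s,q)$, the map $t\mapsto\{-tsp/q\}$ has period $q_0:=q/\gcd(s,q)$ and runs once through $\{0,\tfrac1{q_0},\dots,\tfrac{q_0-1}{q_0}\}$ per period, while $qs$ is a multiple of $q_0$; a standard sum of squares then yields $\frac1{q_0}\sum_{m=0}^{q_0-1}\tfrac m{q_0}(1-\tfrac m{q_0})=\tfrac16-\tfrac1{6q_0^2}=\tfrac16-\tfrac{\gcd(s,q)^2}{6q^2}$ for the $t$-average of $\{-tsp/q\}(1-\{-tsp/q\})$. Plugging back, $\ell_\times=\tfrac14+\tfrac{\gcd(s,q)^2}{12rsq^2}$, hence $c_2=\tfrac23-2\ell_\times=\tfrac16-\tfrac{\gcd(s,q)^2}{6srq^2}$.

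The main obstacle is exactly this last computation: one must set up the $j=qsm+t$ splitting so that a single equidistributing parameter survives, evaluate the torus integral cleanly (the Fourier/resonance bookkeeping), and then keep track of the arithmetic constants -- in particular that $\gcd(sp,q)=\gcd(s,q)$ when $\gcd(p,q)=1$, and that $qs$ is a multiple of the period $q/\gcd(s,q)$ so the $t$-average collapses to one period. The other four cases are routine equidistribution plus elementary algebra. As a consistency check, specialising to $r=s=1$ gives $c_2=\tfrac16-\tfrac1{6q^2}$, matching the value of $\ell$ found earlier for $\widetilde X_N^I$ when $\beta-\alpha=p/q$.
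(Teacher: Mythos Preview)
Your argument is correct, and for the key fifth case it takes a genuinely different route from the paper. The paper proceeds in two stages: first it reduces $s_3(\alpha,\tfrac rs\alpha)$ to $s_3(s\alpha,r\alpha)$ and expands $\{js\alpha\}\{jr\alpha\}$ as a sum of quadratic, linear, and indicator terms in $\{j\alpha\}$, evaluating each Cesàro limit by one-variable equidistribution; then it tackles the full $s_3(\alpha,\tfrac pq+\tfrac rs\alpha)$ by writing $\{j\tfrac pq+jr\alpha\}=\{j\tfrac pq\}+\{jr\alpha\}-\mathds{1}_{\{jp/q\}+\{jr\alpha\}\ge1}$, grouping $j$ into residue classes mod $q$, and computing a family of explicit integrals of indicator functions $f_{k,m}$. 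The case $r<0$ is handled separately by a symmetry argument at the end.

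Your approach bypasses all of this bookkeeping. The splitting $j=qsm+t$ kills both the rational shift and the denominator $s$ simultaneously, leaving a single Riemann-integrable function of the equidistributed parameter $\theta_m=\{m\alpha\}$; the Fourier/resonance computation of $\int_0^1\{qs\theta+a\}\{qr\theta+b\}\,d\theta$ is clean and works uniformly in the sign of $r$; and the factor $\gcd(s,q)^2$ emerges transparently from the period $q_0=q/\gcd(s,q)$ of the $t$-average. The paper's method is more elementary (no Fourier series) and stays closer to Wieand's original computations, but yours is shorter and makes the arithmetic structure visible. Both arrive at $\ell_\times=\tfrac14+\tfrac{\gcd(s,q)^2}{12srq^2}$.
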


The four first cases have been shown by Wieand, as well as the last case for $s=1$ by the same author in \cite{wieand2000eigenvalue}. We complete her work in this appendix, treating the case $s$ arbitrary.

Therefore, suppose $\alpha$ and $\beta$ irrational numbers which are linearly dependent over $\mathbb{Q}$, say $\beta = \frac{p}{q} + \frac{r}{s} \alpha$, with $s\geq 2$. \\
Let us recall that $c_2 := \lim_{n \to \infty} \frac{1}{n} \sum_{j=1}^n (\{j\beta \} - \{j\alpha \})^2$. Since $\lim_{n \to \infty} \frac{1}{n} \sum_{j=1}^n \{jx \}^2$ is easy to compute for all real numbers $x$, it remains to study what was denoted by $s_3$ in \cite{wieand2000eigenvalue}, defined as
\begin{equation}
s_3 (\alpha , \beta ) := \lim_{n \to \infty} \frac{1}{n} \sum_{j=1}^n \{j\alpha \} \{j \beta \}. 
\end{equation}

We first assume that we have $r>0$. We are going to proceed in two steps.

$\bullet$ \underline{Computation of $s_3 (\alpha , \frac{r}{s}\alpha )$} : \\
Before starting the calculation, it is good to notice that if $\psi := \frac{\alpha}{s}$ then $\psi$ is still irrational and consequently the sequence $(n\psi)_{n\in \mathbb{N}^*}$ is still equidistributed. \\
Thus $s_3 (s\psi , r\psi ) = s_3 (s\alpha , r\alpha )$, \emph{i.e} $s_3 (\alpha , \frac{r}{s}\alpha ) = s_3 (s\alpha , r\alpha ) $. Indeed, it is a direct consequence of Theorem 9 given in \cite{wieand2000eigenvalue} that we recall here:  

\begin{theorem}\label{th:Riemann}
Let $f$ be a Riemann integrable function on $[0,1]$, and let $t\in \mathbb{R}\setminus \mathbb{Q}$. Then for all real numbers $b$, 
\[\lim_{n\to \infty} \frac{1}{n} \sum_{j=1}^n f(\{jt + b \} ) = \int_0^1 f(x) \mathrm{d}x.\]
\end{theorem}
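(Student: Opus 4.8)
The plan is to prove the statement by the classical route of Weyl equidistribution: first for the exponential characters, then for continuous $1$-periodic functions, then for Riemann integrable functions. For $f(x)=\mathrm{e}^{2i\pi k x}$ with $k\in\mathbb{Z}\setminus\{0\}$, the geometric sum gives
\[\frac{1}{n}\sum_{j=1}^n \mathrm{e}^{2i\pi k(jt+b)} = \frac{\mathrm{e}^{2i\pi k b}}{n}\cdot\frac{\mathrm{e}^{2i\pi kt}\left(\mathrm{e}^{2i\pi knt}-1\right)}{\mathrm{e}^{2i\pi kt}-1},\]
whose modulus is at most $\frac{2}{n\,\lvert \mathrm{e}^{2i\pi kt}-1\rvert}\to 0=\int_0^1 \mathrm{e}^{2i\pi k x}\,\mathrm{d}x$, the denominator being nonzero precisely because $t$ is irrational; for $k=0$ both sides equal $1$. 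By linearity the conclusion holds for every trigonometric polynomial.

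Next I would pass to a general continuous $1$-periodic $f$: given $\varepsilon>0$, Fej\'er's theorem (or Stone--Weierstrass) furnishes a trigonometric polynomial $P$ with $\lVert f-P\rVert_\infty<\varepsilon$, and then
\[\left\lvert \frac{1}{n}\sum_{j=1}^n f(\{jt+b\}) - \int_0^1 f \right\rvert \le \lVert f-P\rVert_\infty + \left\lvert \frac{1}{n}\sum_{j=1}^n P(\{jt+b\}) - \int_0^1 P \right\rvert + \lVert P-f\rVert_\infty \le 2\varepsilon + o(1)\]
by the first step; letting $n\to\infty$ and then $\varepsilon\to 0$ proves the statement for continuous $f$. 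Finally, for Riemann integrable $f$ (which is in particular bounded, say $\lvert f\rvert\le M$), fix $\varepsilon>0$ and use the Darboux characterisation to sandwich $f$ between step functions, then smooth these into continuous functions $g\le f\le h$ on $[0,1]$ with $\int_0^1(h-g)<\varepsilon$; resetting $g\equiv-M$ and $h\equiv M$ on small neighbourhoods of $0$ and $1$ makes $g,h$ continuous and $1$-periodic at the price of enlarging $\int_0^1(h-g)$ by a quantity tending to $0$ with the size of the neighbourhoods, so we may assume $\int_0^1(h-g)<2\varepsilon$. Monotonicity of the averages together with the continuous case applied to $g$ and $h$ gives
\[\int_0^1 g \le \liminf_{n\to\infty}\frac{1}{n}\sum_{j=1}^n f(\{jt+b\}) \le \limsup_{n\to\infty}\frac{1}{n}\sum_{j=1}^n f(\{jt+b\}) \le \int_0^1 h,\]
and since $\int_0^1 g\le\int_0^1 f\le\int_0^1 h$ with $\int_0^1(h-g)<2\varepsilon$, both the liminf and the limsup lie within $2\varepsilon$ of $\int_0^1 f$; as $\varepsilon$ was arbitrary the limit exists and equals $\int_0^1 f$.

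I do not expect a genuine obstacle here, since this is exactly the standard equidistribution theorem; the only place asking for a little care is the last step, namely producing the continuous $1$-periodic functions $g\le f\le h$ with $\int_0^1(h-g)$ arbitrarily small out of the step-function (Darboux) description of Riemann integrability and checking that periodising near the endpoints costs nothing in the limit. Everything else reduces to the elementary geometric-series estimate of the first step and uniform approximation by trigonometric polynomials.
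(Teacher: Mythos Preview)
Your proof is correct and follows the standard Weyl route (exponential characters, then Fej\'er/Stone--Weierstrass to continuous periodic functions, then a Darboux sandwich to Riemann integrable functions). Note, however, that the paper does not actually prove this theorem: it is quoted there as ``Theorem~9 given in \cite{wieand2000eigenvalue} that we recall here'' and is used as a black box in Appendix~B, so there is no paper proof to compare against beyond the citation; your argument simply supplies the classical justification that the paper omits.
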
 

(We take $b=0$ and $f(x)=\{sx\} \{rx\}$.) \\
The expression of $s_3 (s\alpha , r\alpha )$ is more convenient to handle in practice.
We decompose for $j\geq 1$,
\begin{align*}
\lbrace j s\alpha \rbrace \lbrace j r\alpha \rbrace &= \left( s \lbrace j \alpha \rbrace - \lfloor s \lbrace j \alpha \rbrace \rfloor  \right) \left( r \lbrace j \alpha \rbrace - \lfloor r \lbrace j \alpha \rbrace \rfloor  \right) \\
	&=rs \lbrace j \alpha \rbrace^2 - r \sum_{l =1}^{s-1} \lbrace j \alpha \rbrace \mathds{1}_{ \lbrace j \alpha \rbrace \geq \frac{l}{s}} - s \sum_{k=1}^{r-1} \lbrace j \alpha \rbrace \mathds{1}_{ \lbrace j \alpha \rbrace \geq \frac{k}{r}} \\
	&\qquad + \sum\limits_{\substack{1\leq k \leq r-1 \\ 1\leq l \leq s-1}}  \mathds{1}_{ \lbrace j \alpha \rbrace \geq \max \left(\frac{l}{s} , \frac{k}{r} \right) }
\end{align*}
The three first terms will not pose any difficulty since the limits of their respective means have already been evaluated in \cite{wieand2000eigenvalue}. The novelty holds in the fourth term. For $k \in [\![ 1 , r-1 ]\!]$ and $l \in [\![ 1 , s-1 ]\!]$, let $f_{k,l} (x) :=\mathds{1}_{ x \geq \max \left(\frac{l}{s} , \frac{k}{r} \right) } $. These functions are clearly Riemann-integrable on $[0,1]$, thus
\begin{align*}
\frac{1}{n} \sum_{j=1}^n  \sum\limits_{\substack{1\leq k \leq r-1 \\ 1\leq l \leq s-1}}  \mathds{1}_{ \lbrace j \alpha \rbrace \geq \max \left(\frac{l}{s} , \frac{k}{r} \right) } &\underset{n\to \infty}{\longrightarrow} \sum\limits_{\substack{1\leq k \leq r-1 \\ 1\leq l \leq s-1}}  \int_0^1 f_{k,l} (x) \mathrm{d}x \\
	&= (r-1)(s-1) - \sum\limits_{\substack{1\leq k \leq r-1 \\ 1\leq l \leq s-1}} \max \left(\frac{l}{s} , \frac{k}{r} \right)
\end{align*}
with
\begin{align*}
\sum\limits_{\substack{1\leq k \leq r-1 \\ 1\leq l \leq s-1}} \max \left(\frac{l}{s} , \frac{k}{r} \right)
	&= \sum_{k=1}^{r-1} \left( \left( \sum_{l =1}^{\left\lfloor\frac{sk}{r}\right\rfloor} \frac{k}{r} \right) + \left( \sum_{l = \left\lfloor\frac{sk}{r}\right\rfloor + 1 }^{s-1} \frac{l}{s} \right) \right) \\
	&= \sum_{k=1}^{r-1} \left( \left(  \frac{k}{r}\left\lfloor\frac{sk}{r}\right\rfloor  \right) + \left( \frac{s-1}{2} -\frac{\left\lfloor\frac{sk}{r}\right\rfloor \left( \left\lfloor\frac{sk}{r}\right\rfloor +1\right)}{2s}  \right) \right) \\
	&= \frac{(r-1)(s-1)}{2} + \frac{1}{2s} \sum_{k=1}^{r-1}  \left( \frac{sk}{r} \right)^2 - \frac{sk}{r} - \left\{ \frac{sk}{r} \right\}^2 + \left\{ \frac{sk}{r} \right\} \\
	&=  \frac{(r-1)(s-1)}{2} + \frac{1}{2s} \left( \frac{s^2-1}{r^2} \frac{r(r-1)(2r-1)}{6} - \frac{s-1}{r} \frac{r(r-1)}{2} \right). 
\end{align*}
Once expanded, we get 
\[(r-1)(s-1) - \sum\limits_{\substack{1\leq k \leq r-1 \\ 1\leq l \leq s-1}} \max \left(\frac{l}{s} , \frac{k}{r} \right) = \frac{1}{4} + \frac{rs}{3} - \frac{r}{4} - \frac{s}{4} - \frac{s}{12r} - \frac{r}{12s} + \frac{1}{12sr}.\]
We deduce 
\begin{align*}
s_3 (s\alpha , r\alpha) &= \frac{rs}{3} - r \left(\frac{s}{3} - \frac{1}{4} - \frac{1}{12s} \right) - s \left(\frac{r}{3} - \frac{1}{4} - \frac{1}{12r} \right) \\
	&\qquad  + \left(\frac{1}{4} + \frac{rs}{3} - \frac{r}{4} - \frac{s}{4} - \frac{s}{12r} - \frac{r}{12s} + \frac{1}{12sr} \right) \\
	&= \frac{1}{4} + \frac{1}{12sr}
\end{align*}

$\bullet$ \underline{Computation of $s_3 (\alpha , \frac{p}{q} + \frac{r}{s}\alpha )$} : \\
First of all, the particular case $q=1$ is trivial since $s_3 (\alpha , p + \frac{r}{s}\alpha ) = s_3 (\alpha ,\frac{r}{s}\alpha )$ and then $c_2= \frac{2}{3} - 2 \left( \frac{1}{4} + \frac{1}{12sr} \right) = \frac{1}{6} - \frac{\gcd(s,1)^2}{6sr\times 1^2}$. Thus we assume in all the following that $q\geq 2$. \\
In the same manner as at the beginning of the first step, we notice that $s_3 (\alpha , \frac{p}{q} + \frac{r}{s}\alpha ) = s_3 (s\alpha , \frac{p}{q} + r\alpha )$ (using Theorem~\ref{th:Riemann} with $b=\frac{p}{q}$ and for $f$ a complicated expression that we do not precise here, which is bounded and piecewise continuous so Riemann-integrable). \\
For all $j \geq 1$,  
\[\lbrace j s \alpha \rbrace \lbrace j \frac{p}{q} + j r \alpha \rbrace = \lbrace j s \alpha \rbrace  \lbrace j \frac{p}{q} \rbrace + \lbrace j s \alpha \rbrace \lbrace j r \alpha \rbrace - \lbrace j s \alpha \rbrace \mathds{1}_{  \lbrace j \frac{p}{q} \rbrace + \lbrace j r \alpha \rbrace \geq 1 }. \]
The mean of the first term tends to $s_3 (s \alpha , \frac{p}{q}) = \frac{1}{4} - \frac{1}{4q}$. The mean of the second term tends to $s_3 (s\alpha , r\alpha ) = \frac{1}{4} + \frac{1}{12sr}$ by the first step. It remains to study the third term. In a similar way to what is done in \cite{wieand2000eigenvalue}, it is easy to check that 
\[\frac{1}{q \left\lfloor \frac{n}{q} \right\rfloor }  \sum_{j=1}^{q \left\lfloor \frac{n}{q} \right\rfloor}  \lbrace j s \alpha \rbrace  \mathds{1}_{  \lbrace j \frac{p}{q} \rbrace + \lbrace j r \alpha \rbrace \geq 1 }   \underset{n\to\infty}{=} \frac{1}{n} \sum_{j=1}^n  \lbrace j s \alpha \rbrace \mathds{1}_{  \lbrace j \frac{p}{q} \rbrace + \lbrace j r \alpha \rbrace \geq 1 }  \ + \ o(1). \]
Let $n\in \mathbb{N}^*$. We use the periodicity of the sequence $(\lbrace \frac{kp}{q} \rbrace )$ to write that the left-hand side expression is equal to 
\begin{align*}
&\frac{1}{q \left\lfloor \frac{n}{q} \right\rfloor } \sum_{j=0}^{\left\lfloor \frac{n}{q} \right\rfloor - 1} \sum_{k=1}^q  \lbrace (k+jq)s\alpha \rbrace \mathds{1}_{ \lbrace (k+jq)r\alpha \rbrace \geq 1 - \left\{ \frac{kp}{q} \right\} } \\
&\qquad =  s . \frac{1}{q} \sum_{k=1}^q  \frac{1}{\left\lfloor \frac{n}{q} \right\rfloor} \sum_{j=0}^{\left\lfloor \frac{n}{q} \right\rfloor - 1}   \lbrace (k+jq)\alpha \rbrace \mathds{1}_{ \lbrace (k+jq)r\alpha \rbrace \geq 1 - \left\{ \frac{kp}{q} \right\} }    \\
	&\qquad \qquad - \frac{1}{q} \sum_{k=1}^q  \frac{1}{\left\lfloor \frac{n}{q} \right\rfloor} \sum_{j=0}^{\left\lfloor \frac{n}{q} \right\rfloor - 1} \sum_{m=1}^{s-1} \mathds{1}_{ \left(  \lbrace (k+jq) \alpha  \rbrace \geq \frac{m}{s} \right) \cap \left(  \lbrace (k+jq) r \alpha  \rbrace \geq  1 - \left\{ \frac{kp}{q} \right\}  \right) }
\end{align*}
with, (see \cite{wieand2000eigenvalue}), 
\[\frac{1}{q} \sum_{k=1}^q  \frac{1}{\left\lfloor \frac{n}{q} \right\rfloor} \sum_{j=0}^{\left\lfloor \frac{n}{q} \right\rfloor - 1}   \lbrace (k+jq)\alpha \rbrace \mathds{1}_{ \lbrace (k+jq)r\alpha \rbrace \geq 1 - \left\{ \frac{kp}{q} \right\} }  \underset{n\to\infty}{\longrightarrow} \frac{1}{4} + \frac{1}{12r} - \frac{1}{4q}  - \frac{1}{12rq^2}. \] 

For $k \in [\![ 1 , q]\!]$ and $m \in [\![ 1 , s-1 ]\!]$, let
\[f_{k,m} (x) = \mathds{1}_{ \left( x \geq \frac{m}{s} \right) \cap \left( rx - \sum_{l =1}^{r-1} \mathds{1}_{x \geq \frac{l}{r} } \geq 1 - \left\{ \frac{kp}{q} \right\} \right) }. \]
We compute its integral between $0$ and $1$:
\begin{align*}
\int_0^1 f_{k,m} (x) \mathrm{d}x &= \sum_{l = 0}^{r-1} \int_{\frac{1 - \left\{ \frac{kp}{q} \right\} + l}{r}}^{\frac{l+1}{r}} \mathds{1}_{ \left( x \geq \frac{m}{s} \right)} \mathrm{d}x  \\
	&= 0 + \left( \frac{\left\lfloor \frac{rm}{s} \right\rfloor +1}{r} - \max \left( \frac{m}{s} , \frac{1 - \left\{ \frac{kp}{q} \right\} + \left\lfloor \frac{rm}{s} \right\rfloor }{r} \right) \right) \\
	&\qquad + \sum_{l = \left\lfloor \frac{rm}{s} \right\rfloor +1 }^{r-1} \frac{l +1}{r} - \frac{1 - \left\{ \frac{kp}{q} \right\} + l}{r} \\
	&= \frac{1- \left\{ \frac{rm}{s} \right\}}{r} - \frac{1- \left\{ \frac{kp}{q} \right\} - \left\{ \frac{rm}{s} \right\}}{r} \mathds{1}_{1- \left\{ \frac{kp}{q} \right\} - \left\{ \frac{rm}{s} \right\} \geq 0} \\
	&\qquad + \left\{ \frac{kp}{q} \right\} \left( 1 - \frac{1}{r} - \frac{m}{s} + \frac{1}{r} \left\{ \frac{rm}{s} \right\} \right) \\
	&= \frac{1- \left\{ \frac{kp}{q} \right\} - \left\{ \frac{rm}{s} \right\}}{r} \mathds{1}_{1- \left\{ \frac{kp}{q} \right\} - \left\{ \frac{rm}{s} \right\} < 0} + \left\{ \frac{kp}{q} \right\} \left( 1 - \frac{m}{s} + \frac{1}{r} \left\{ \frac{rm}{s} \right\} \right).
\end{align*}
Thus,
\begin{align*}
\sum_{k=1}^q \sum_{m=1}^{s-1} \int_0^1 f_{k,m} (x) \mathrm{d}x &= - \sum_{k=1}^q \left\{ \frac{kp}{q} \right\} + \sum_{k=1}^q  \sum_{m=0}^{s-1} \int_0^1 f_{k,m} (x) \mathrm{d}x  \\
	&= -\sum_{k=0}^{q-1} \frac{k}{q} + \sum_{k=0}^{q-1} \sum_{m=0}^{s-1} \frac{1 - \frac{k}{q} - \frac{m}{s}}{r} \mathds{1}_{\frac{m}{s} > 1 - \frac{k}{q} } + \frac{k}{q} \left( 1- \frac{m}{s} + \frac{m}{sr}  \right) \\
	&= \frac{q-1}{2} \left( s-1 - \frac{s-1}{2} + \frac{s-1}{2r} \right) + \sum_{k=0}^{q-1} \sum_{m=0}^{s-1} \frac{1 - \frac{k}{q} - \frac{m}{s}}{r} \mathds{1}_{\frac{m}{s} > 1 - \frac{k}{q} } 
\end{align*}
where we get the second equality noting that the numbers $\left\{ \frac{kp}{q} \right\}$ (resp. $\left\{ \frac{mr}{s} \right\}$ ) cycle through some rearrangement of the numbers $0 , \frac{1}{q} ,\cdots , \frac{q-1}{q}$ (resp. $0 , \frac{1}{s} ,\cdots , \frac{s-1}{s}$). \\
Now, it just remains to compute the second term in the last expression, which is equal to
\[\frac{1}{r} \sum_{k= \left\lfloor \frac{q}{s} \right\rfloor + 1}^{q-1} \sum_{m = \left\lfloor s- \frac{ks}{q} \right\rfloor + 1}^{s-1}  \left( 1 - \frac{k}{q} - \frac{m}{s} \right) .\]

Note that the condition $\left\lfloor \frac{q}{s} \right\rfloor + 1 \leq q-1$ is always satisfied when $q\geq 3$ since $s\geq 2 > \frac{3}{2} \geq \frac{q}{q-1}$. When $q=2$, it only fails for $s=2$ (in this case we have an empty sum). For now assume we are not in the case $q=s=2$. We will easily treat this remaining case at the end. We have:
\begin{align*}
\sum_{k= \left\lfloor \frac{q}{s} \right\rfloor + 1}^{q-1} \sum_{m = \left\lfloor s- \frac{ks}{q} \right\rfloor + 1}^{s-1} \left(1 - \frac{k}{q} - \frac{m}{s} \right)  &= \sum_{k= \left\lfloor \frac{q}{s} \right\rfloor + 1}^{q-1}  \left( (s-1)- \left\lfloor s- \frac{ks}{q} \right\rfloor \right) \left( 1 - \frac{k}{q} \right) \\
	&\qquad - \frac{1}{s} \sum_{m = \left\lfloor s- \frac{ks}{q} \right\rfloor + 1}^{s-1}  m \\
	&= \sum_{k= \left\lfloor \frac{q}{s} \right\rfloor + 1}^{q-1} \frac{1}{2} \left( \frac{k}{q} - s \left( \frac{k}{q} \right)^2 \right) \\
	&\qquad + \frac{1}{2s} \left( \left\{ \frac{-ks}{q}  \right\}^2 - \left\{ \frac{-ks}{q}  \right\}  \right).
\end{align*}
In addition, using the facts that for all $x \in \mathbb{R}$, $\{-x\}(1-\{-x\})= \{ x\}(1-\{ x\})$, and for all $k \in [\![ 0 , \left\lfloor \frac{q}{s} \right\rfloor  ]\!]$, $0\leq \frac{ks}{q} <1$, it comes
\begin{align*}
&\sum_{k=0}^{\left\lfloor \frac{q}{s} \right\rfloor } \frac{1}{2} \left( \frac{k}{q} - s \left( \frac{k}{q} \right)^2 \right) + \frac{1}{2s} \left( \left\{ \frac{-ks}{q}  \right\}^2 - \left\{ \frac{-ks}{q}  \right\} \right) \\
&\qquad \qquad = \sum_{k=0}^{\left\lfloor \frac{q}{s} \right\rfloor} \frac{1}{2} \left( \frac{k}{q} - s \left( \frac{k}{q} \right)^2 \right) + \frac{1}{2s} \left( \left(\frac{ks}{q}  \right)^2 - \frac{ks}{q}   \right) = 0.
\end{align*}

Furthermore, denoting $d:=\gcd(s,q)$ and $s=d\tilde{s}$, $q=d \tilde{q}$, 
\begin{align*}
\sum_{k=0}^{q-1} \left\{ \frac{ks}{q} \right\} \left( 1 - \left\{ \frac{ks}{q} \right\} \right) &= d \sum_{k=0}^{\tilde{q}-1} \left\{ \frac{k\tilde{s}}{\tilde{q}} \right\} \left( 1 - \left\{ \frac{k\tilde{s}}{\tilde{q}} \right\} \right) \\
	&= d \sum_{k=0}^{\tilde{q}-1} \frac{k}{\tilde{q}} \left( 1 - \frac{k}{\tilde{q}} \right) \\
	&= \frac{q}{6} - \frac{d^2}{6q}.
\end{align*}
Thus, 
\begin{align*}
\sum_{k= \left\lfloor \frac{q}{s} \right\rfloor + 1}^{q-1} \sum_{m = \left\lfloor s- \frac{ks}{q} \right\rfloor + 1}^{s-1} \left(1 - \frac{k}{q} - \frac{m}{s} \right)  &=   - \frac{q}{12s} + \frac{d^2}{12qs} + \frac{1}{2} \sum_{k=0}^{q-1}  \left( \frac{k}{q} - s \left( \frac{k}{q} \right)^2 \right).
\end{align*}
Expanding the expressions it follows
\begin{align*}
\frac{1}{n} \sum_{j=1}^n  \lbrace js \alpha \rbrace \mathds{1}_{  \lbrace j \frac{p}{q} \rbrace + \lbrace j r \alpha \rbrace \geq 1 } & \underset{n\to \infty}{\longrightarrow} s \left( \frac{1}{4} + \frac{1}{12r} - \frac{1}{4q} - \frac{1}{12rq^2} \right) \\ 
	& \qquad - \frac{1}{q} \left(\frac{1}{4} + \frac{qs}{4} - \frac{q}{4} + \frac{qs}{4r} - \frac{q}{4r} - \frac{s}{4}  - \frac{s}{4r} + \frac{1}{4r} \right)         \\
	& \qquad - \frac{1}{qr} \left(  - \frac{q}{12s} + \frac{d^2}{12sq} -\frac{qs}{6} + \frac{s}{4} - \frac{s}{12q} + \frac{q}{4} - \frac{1}{4} \right) \\
	& \qquad = \frac{1}{4} - \frac{1}{4q} + \frac{1}{12sr} - \frac{d^2}{12srq^2}
\end{align*}
Consequently,
\begin{align*}
s_3 \left(\alpha , \frac{p}{q} + \frac{r}{s} \alpha \right) &= \left( \frac{1}{4} - \frac{1}{4q} \right) + \left( \frac{1}{4} + \frac{1}{12sr} \right) - \left( \frac{1}{4} - \frac{1}{4q} + \frac{1}{12sr} - \frac{d^2}{12srq^2} \right) \\
	&= \frac{1}{4} + \frac{d^2}{12srq^2}.
\end{align*}
It is easy to check that this formula remains true for $q=s=2$, since in this case we have 
\[\sum_{k=1}^q \sum_{m=1}^{s-1} \int_0^1 f_{k,m} (x) \mathrm{d}x = \frac{1}{4}+ \frac{1}{4r}\] and thus
\begin{align*}
s_3 \left(\alpha , \frac{p}{q} + \frac{r}{s} \alpha \right) &= \left( \frac{1}{4} - \frac{1}{4q} \right) + \left( \frac{1}{4} + \frac{1}{12sr} \right) \\
	&\qquad - s \left( \frac{1}{4} + \frac{1}{12sr} - \frac{1}{4q} - \frac{1}{12rq^2} \right)  + \frac{1}{q} \left( \frac{1}{4}+ \frac{1}{4r} \right) \\
	&= \frac{1}{4} + \frac{1}{24r} = \frac{1}{4} + \frac{d^2}{12srq^2}.
\end{align*}

Finally, we assume $r<0$. \\
As we have seen above, the result of $s_3 \left(\alpha , \frac{p}{q} + \frac{r}{s} \alpha \right)$ is not depending on the choice of the irrational $\alpha$, thus one can replace it by $(-s)\alpha$, and then
\begin{align*}
s_3 \left(\alpha , \frac{p}{q} + \frac{r}{s} \alpha \right) &=  s_3 \left(s (-\alpha) , \frac{p}{q} + \vert r \vert \alpha \right) \\
	&= \lim_{n \to \infty } \frac{1}{n} \sum_{j=1}^n (1- \{ js\alpha \} ) \left\{ j \left(  \frac{p}{q} + \vert r \vert \alpha \right)  \right\} \\
	&= \lim_{n \to \infty } \frac{1}{n} \sum_{j=1}^n \left\{ j \left(  \frac{p}{q} + \vert r \vert \alpha \right)  \right\}  - s_3 \left( s\alpha , \frac{p}{q} + \vert r \vert \alpha  \right) \\
	&= \frac{1}{2} - \left( \frac{1}{4} + \frac{d^2}{12s \vert r \vert q^2} \right) \\
	&= \frac{1}{4} + \frac{d^2}{12srq^2}.
\end{align*}

\textbf{Acknowledgements}: The author wishes to thank his PhD advisor Joseph Najnudel for suggesting the problem and helpful comments.

\bibliographystyle{plain}
\bibliography{biblio}

\begin{thebibliography}{10}

\bibitem{arratia1992poisson}
Richard Arratia, AD~Barbour, and Simon Tavar{\'e}.
\newblock Poisson process approximations for the ewens sampling formula.
\newblock {\em The Annals of Applied Probability}, pages 519--535, 1992.

\bibitem{arratia2003logarithmic}
Richard Arratia, Andrew~D Barbour, and Simon Tavar{\'e}.
\newblock {\em Logarithmic combinatorial structures: a probabilistic approach},
  volume~1.
\newblock European Mathematical Society Z{\"u}rich, 2003.

\bibitem{ben2015fluctuations}
G{\'e}rard Ben~Arous and Kim Dang.
\newblock On fluctuations of eigenvalues of random permutation matrices.
\newblock {\em Annales de L'Institut Henri Poincare Section (B) Probability and
  Statistics}, 51:620--647, 2015.

\bibitem{blair2000random}
Nathaniel Blair-Stahn.
\newblock Random permutation matrices.
\newblock 2000.

\bibitem{bourgade2010mesoscopic}
Paul Bourgade.
\newblock Mesoscopic fluctuations of the zeta zeros.
\newblock {\em Probability Theory and Related Fields}, 148(3-4):479--500, 2010.

\bibitem{chafai2013processus}
Djalil Chafa{\"\i}, Yan Doumerc, and Florent Malrieu.
\newblock Processus des restaurants chinois et loi d'{E}wens.
\newblock {\em Revue de Math{\'e}matiques Sp{\'e}ciales (RMS)}, 123(3):56--74,
  2013.

\bibitem{dang2014characteristic}
Kim Dang and Dirk Zeindler.
\newblock The characteristic polynomial of a random permutation matrix at
  different points.
\newblock {\em Stochastic Processes and their Applications}, 124(1):411--439,
  2014.

\bibitem{dehaye2013averages}
Paul-Olivier Dehaye and Dirk Zeindler.
\newblock On averages of randomized class functions on the symmetric groups and
  their asymptotics [sur les moyennes de fonctions al{\'e}atoires centrales
  pour les groupes sym{\'e}triques, et r{\'e}sultats asymptotiques].
\newblock In {\em Annales de l'institut Fourier}, volume~63, pages 1227--1262,
  2013.

\bibitem{evans2002eigenvalues}
Steven~N Evans.
\newblock Eigenvalues of random wreath products.
\newblock {\em Electron. J. Probab}, 7:1--15, 2002.

\bibitem{ewens1972sampling}
Warren~J Ewens.
\newblock The sampling theory of selectively neutral alleles.
\newblock {\em Theoretical population biology}, 3(1):87--112, 1972.

\bibitem{hambly2000characteristic}
BM~Hambly, Peter Keevash, Neil O'Connell, and Dudley Stark.
\newblock The characteristic polynomial of a random permutation matrix.
\newblock {\em Stochastic processes and their applications}, 90(2):335--346,
  2000.

\bibitem{najnudel2013distribution}
Joseph Najnudel and Ashkan Nikeghbali.
\newblock The distribution of eigenvalues of randomized permutation matrices
  [sur la distribution des valeurs propres de matrices de permutation
  randomis{\'e}es].
\newblock In {\em Annales de l'institut Fourier}, volume~63, pages 773--838,
  2013.

\bibitem{wieand2000eigenvalue}
Kelly Wieand.
\newblock Eigenvalue distributions of random permutation matrices.
\newblock {\em The Annals of Probability}, 28(4):1563--1587, 2000.

\bibitem{wieand2003permutation}
Kelly Wieand.
\newblock Permutation matrices, wreath products, and the distribution of
  eigenvalues.
\newblock {\em Journal of Theoretical Probability}, 16(3):599--623, 2003.

\bibitem{zeindler2010permutation}
Dirk Zeindler.
\newblock Permutation matrices and the moments of their characteristics
  polynomials.
\newblock {\em Electronic Journal of Probability}, 15(34):1092--1118, 2010.

\bibitem{zeindler2013central}
Dirk Zeindler.
\newblock Central limit theorem for multiplicative class functions on the
  symmetric group.
\newblock {\em Journal of Theoretical Probability}, 26(4):968--996, 2013.

\bibitem{zygmund2002trigonometric}
Antoni Zygmund.
\newblock {\em Trigonometric series}, volume I, II.
\newblock Cambridge university press, 2002.

\end{thebibliography}

\end{document}